\documentclass[12pt]{article}
\usepackage{bm,mathtools}
\usepackage{amsfonts}
\usepackage[a4paper, margin=2cm]{geometry}
  \usepackage{enumitem}
 \usepackage{multirow}
\usepackage{amsmath}

\usepackage{booktabs}
\usepackage{graphicx}
\usepackage{subcaption}
\usepackage{siunitx}

 \usepackage{mathrsfs}
  \newtheorem{theorem}{Theorem}[section]
 
 \newtheorem{proposition}[theorem]{Proposition}
 \newtheorem{corollary}[theorem]{Corollary}

 \newenvironment{proof}{{\it Proof}. }{\hfill\END\\[0.5ex]}

\DeclareMathOperator{\vcurl}{\overrightarrow{\mathrm{curl}}}
\DeclareMathOperator{\curl}{curl}
\DeclareMathOperator{\divv}{div}

\DeclareMathOperator{\SL}{SL}
\DeclareMathOperator{\DL}{DL} 

\newcommand{\D}{\mathrm{D}}
\DeclareMathOperator{\V}{V}
\DeclareMathOperator{\W}{W}
\DeclareMathOperator{\K}{K}
\DeclareMathOperator{\HH}{H}
\DeclareMathOperator{\J}{J}
\DeclareMathOperator{\I}{I}
\DeclareMathOperator{\Hes}{Hes}

\newcommand{\nnn}{\bm{n}}
\newcommand{\ttt}{\bm{t}}
\usepackage{cancel}

\usepackage{tikz}

\usetikzlibrary{arrows,decorations.markings,arrows.meta}
\usetikzlibrary{calc}

\usepackage{pgfplots}
\usepackage{makecell}

\usepackage{amssymb}

\usepackage[normalem]{ulem}

\newcommand{\be}{\begin{equation}}
\newcommand{\ee}{\end{equation}}

\newcommand{\x}{{\bm{x}}}
\newcommand{\y}{{\bm{y}}}

 \makeatletter\@addtoreset{equation}{section}\makeatother

\newcommand{\END}{\hfill$\Box$}

\usepackage{fancyhdr}
\title{Boundary integral equation based solvers for elastic transmission problems in complex media}
\date{}
\author{V\'{\i}ctor Dom\'{\i}nguez\thanks{Dep. Ingenier\'{i}a Matem\'atica e Inform\'atica, Universidad P\'{u}blica de Navarra. Campus de Tudela 31500 - Tudela, Spain, / Institute for Advanced Materials and Mathematics (INAMAT$^2$), Campus de Arrosadia
31006,  Pamplona, Spain. e-mail: victor.dominguez@unavarra.es }\and Catalin Turc\thanks{  Department of
Mathematical Sciences, New Jersey  Institute of Technology,
Univ. Heights. 323 Dr. M. L. King Jr. Blvd, Newark, NJ 07102, USA, e-mail: catalin.c.turc@njit.edu.}}
\allowdisplaybreaks

\begin{document}
\maketitle
\begin{abstract}
We present a boundary integral equation (BIE) framework for two-dimensional time-harmonic elastic transmission problems. Using the Helmholtz decomposition, the elastodynamic displacement fields are expressed in terms of scalar pressure and shear potentials, so that only the four classical Helmholtz boundary integral operators are needed. The key step is a rewriting of the Navier traction operator in the local normal--tangential frame of the interface, in which second-order normal derivatives are eliminated in favour of tangential derivatives and curvature terms. This leads to a one-parameter family of indirect combined field integral equations, depending on a coupling constant $\alpha\ne 0$, which we prove to be uniquely solvable. The principal part of the resulting operator is, however, defective, and the discretized systems are severely ill-conditioned. Using the pseudodifferential calculus of the Helmholtz BIOs, we construct explicit left and right analytical preconditioners and prove that the preconditioned operator is a compact perturbation of the identity. Numerical experiments based on high-order Nystr\"om discretizations confirm that the preconditioner clusters the spectrum: the number of GMRES iterations becomes independent of the discretization and is reduced by one order of magnitude or more, also in the high-frequency regime and for non-convex scatterers.
\end{abstract}

 \textbf{Keywords}: Time-harmonic Navier transmission problems, Helmholtz decomposition, boundary integral equations, analytical preconditioning, pseudo-differential operators, Nystr\"om discretizations.\\

 \textbf{AMS subject classifications}:
 65N38, 35J05, 65T40, 65F08
\section{Introduction}

Elastic transmission problems in complex piecewise homogeneous media play a fundamental role in various engineering and scientific applications, ranging from seismology and geophysics to non-destructive testing and medical imaging. Designing accurate and efficient numerical solvers for such wave propagation problems remains a computational challenge, particularly in the presence of complex, non-convex interfaces of material discontinuity. Boundary Integral Equation (BIE) methods are naturally suited for these problems, as they reduce the dimensionality of the domain and rigorously enforce radiation conditions at infinity. 

Standard BIE formulations for elastodynamics rely heavily on Navier boundary integral operators (BIOs). However, owing to the more complicated tensorial nature and stronger singularities of the Navier Green's tensor, the numerical implementation of high-order discretizations for these operators is significantly more challenging and computationally expensive than for their scalar acoustic counterparts~\cite{chapko2000numerical,DomCat:2022,DoSaSa:2015}; see also~\cite{LELOUER20141,chaillat2017fast} for 3D discretization algorithms. To bypass the use of Navier BIOs altogether, an attractive alternative is to seek the elastic fields in their Helmholtz decomposition form; see~\cite{lai2019framework,dominguez2022robust} for single layer and combined layer representations, and~\cite{dong2022spectral} for their 3D counterparts. By doing so, the displacement fields are represented via scalar potentials corresponding to the pressure (longitudinal) and shear (transversal) wavenumbers, allowing the scattering problem to be solved using exclusively scalar Helmholtz layer potentials~\cite{dominguez2022nystrom}.

While the continuity of Dirichlet traces translates seamlessly into the Helmholtz decomposition framework, the enforcement of Neumann traces (the traction operator) is notoriously difficult. The traction operator applied to the Helmholtz decompositions naturally yields second-order normal derivatives of the potentials (Hessian traces), which are not available from the Cauchy data. In this paper, we introduce a simple approach to overcome this bottleneck: we recast the normal and tangential Hessian traces in the local orthogonal $(\nnn,\ttt)$ frame of the boundary. By exploiting the Helmholtz equation, specifically the formulation of the Laplacian in the local frame, we replace the problematic second-order normal derivatives with local tangential derivatives and curvature terms acting directly on the Cauchy data. This advancement allows for a straightforward and robust integration of the four standard Helmholtz BIOs of the Calder\'on calculus into the Navier transmission problem.
Building upon these local trace operators, we propose a one-parameter family of indirect combined field integral equation (CFIE) formulations for two-dimensional elastic transmission problems, depending on a coupling constant $\alpha\neq0$, which we refer to as the $(\nnn,\ttt)$-CFIE$_\alpha$. They are similar in nature to the classical CFIE for Helmholtz problems \cite{costabel-stephan,kress1978transmission} and to its extension to Helmholtz transmission problems in \cite{dominguez2016well}. We prove that the $(\nnn,\ttt)$-CFIE$_\alpha$ is uniquely solvable for every $\alpha\neq0$, every density ratio and every admissible pair of Lam\'e parameters. However, a pseudodifferential analysis of the resulting block operator reveals a serious drawback: the formulation is defective. The principal part of the operator has an infinite-dimensional kernel and cokernel, so that the operator is not Fredholm, let alone a compact perturbation of the identity. This is not unexpected: the same phenomenon was observed in \cite{lai2019framework,dominguez2022robust} for Helmholtz-decomposition formulations of Dirichlet and Neumann problems for the Navier equation. Upon discretization, the defectiveness translates into severely ill-conditioned linear systems and into Krylov iteration counts (e.g.\ for GMRES) that grow with the number of unknowns, more so as the frequency increases.

The main theoretical contribution of this paper is the construction of explicit left and right analytical preconditioners for the $(\nnn,\ttt)$-CFIE$_\alpha$ that cure this defectiveness: we prove that the preconditioned operator is a compact perturbation of the identity. Operator and analytical preconditioning techniques have received considerable attention as a means to accelerate the iterative solution of wave propagation problems, from the scalar Helmholtz equation~\cite{antoine2005alternative,antoineB,bruno2012regularized,darbas2013analytical,turc2} to Maxwell~\cite{hiptmair2006operator} and elastodynamic~\cite{chaillat2020analytical} problems. In our framework preconditioning is not merely an acceleration device but a necessity, since the unpreconditioned formulation is not Fredholm. Furthermore, a thorough understanding of these preconditioned operators in the two-dimensional setting is a necessary stepping stone towards the three-dimensional case, where Krylov subspace solvers are virtually mandatory.

Beyond the specific formulations analyzed in this work, the decoupling strategy and local boundary differential calculus developed here are intended to lay the mathematical and algorithmic groundwork for broader integral equation paradigms on composite structures.
We are confident that the numerical algorithms can eventually be rigorously analyzed for the present configurations using techniques similar to those employed in \cite{dominguez2022nystrom} (see also \cite{dong2021highly}).

In the present work, we restrict our analysis to smooth domains to ensure the validity of the pseudo-differential expansions and the smoothness of geometric terms, such as the signed curvature. Nevertheless, we expect that corners can be successfully handled in practice by employing parameterizations graded towards the corners to regularize the boundary singularities \cite{Kress,KressCorner,dominguez2016well,helsing2008corner,helsing2011fast}. Additionally, recasting elastic transmission boundary conditions solely through Helmholtz-type boundary integral operators provides a viable stepping stone toward fully systematic global and local Multi-Trace Formulations (MTF) \cite{jerez-hanckes1,jerez-hanckes2,HJP13,jerez2017multitrace,CHAILLAT2026293}. By avoiding the direct inversion and singular integration of the full elastodynamic Green's tensors at complex geometric interfaces and multiple junctions, this approach offers an appealing modular framework that can be seamlessly incorporated into multi-trace Calder\'on projectors as well as generalized operator-preconditioned boundary element solvers.

Furthermore, while the scope of this paper is restricted to two-dimensional settings, a primary motivation behind this detailed study is to establish a rigorous mathematical and computational foundation for three-dimensional wave propagation problems. In 3D, the elastodynamic Helmholtz decomposition becomes inherently more intricate, involving a scalar potential for pressure waves and a divergence-free vector potential for shear waves. The mathematical structure of this vector potential shares deep similarities with the electromagnetic fields governed by Maxwell's equations, requiring complex vector trace algebras and surface tangential calculus. By thoroughly understanding the defectiveness of the CFIE formulation and successfully designing exact analytical preconditioners in 2D, we gain crucial insights that pave the way for extending these localized trace techniques and regularization strategies not only to the significantly more challenging 3D elastic transmission problems, but also to analogous BIE formulations in 3D electromagnetics.

The paper is organized as follows. In Section~\ref{setting}, we introduce the Navier equations in two dimensions, define the transmission problem, and present our novel Helmholtz decomposition approach featuring the localized Neumann trace formulas. In Section~\ref{GradHessCalculations}, we introduce the Helmholtz boundary layer operators and derive the indirect combined field formulation for the transmission problem. Through a rigorous principal symbol analysis, we expose the defective nature of this formulation and subsequently, in Section~\ref{sec:preconditioning}, we construct the analytical preconditioners to restore well-posedness. Finally, in Section~\ref{NR}, we provide extensive numerical evidence using high-order Nystr\"om discretizations~\cite{KressH,Kress} based on global trigonometric interpolation and splittings of the logarithmic, strongly singular, and hypersingular kernels.  Our numerical experiments, conducted on both convex and complex non-convex geometries in the high-frequency regime, demonstrate that our analytical preconditioning strategy clusters the spectrum of the discrete systems and makes the number of GMRES iterations essentially independent of the discretization.

\paragraph{Notation.} Throughout the paper the alphabet indicates the block structure of the objects involved. Roman letters denote operators acting on scalar functions on the interface $\Gamma$ (the Helmholtz boundary integral operators, the tangential derivative, the Hilbert transform, the mean value operator) and $H^s(\Gamma)$ denotes the usual Sobolev spaces. Calligraphic letters denote objects with two components: the product spaces $\mathcal H^s(\Gamma):=H^s(\Gamma)\times H^s(\Gamma)$ and the $2\times2$ block operators acting on them; for a scalar operator $\mathrm A$, the same letter in calligraphic font, $\mathcal A$, denotes its componentwise action. Fraktur letters denote $2\times2$ block operators whose blocks are themselves calligraphic operators, such as the transmission operator $\mathfrak A_\alpha$ and the preconditioners $\mathfrak P^{\rm left}$, $\mathfrak P^{\rm right}$ of Sections~\ref{GradHessCalculations} and~\ref{sec:preconditioning}. Vectors and vector fields are set in bold.

\section{Navier equations and Helmholtz decompositions}\label{setting}

\begin{figure}

\caption{Sketch of the domain of our problem and the $\{\nnn,\ttt\}$ frame.}
\begin{center}
\begin{tikzpicture}
    \coordinate (O) at (0,0);

    \begin{scope}[rotate=30]
        \path[draw,fill=gray!30] (O) ellipse (2cm and 1.6666cm);
    \coordinate (P) at ($(O)+(195:2cm and 1.666cm)$);
    \end{scope}


    \draw[->,black,thick] (P) -- ++(140:-1cm) node[below] {$\ttt$};;
    \draw[->,black,thick] (P) -- ++(230:1cm) node[right] {$\nnn$};
\node at (2.1,2.1) {$\Omega_1$};
    \node at (O) {$\Omega_2$};
    \node at (2.,-1.4) {$\Gamma$};
    \filldraw [black] (P) circle (2pt);
\end{tikzpicture}
\end{center}
\end{figure}

\subsection{Transmission problem}
For any vector field ${\bf u}=(u_1,u_2)^\top:\mathbb{R}^2\to\mathbb{R}^2$
(vectors in this paper will always be regarded as {\em column} vectors), the
strain tensor in a linear, isotropic, homogeneous elastic medium with Lam\'e
constants $\lambda$ and $\mu$ is defined by
\[
 \bm{\epsilon}({\bf u})
 :=
 \frac{1}{2}\left(\nabla {\bf u}+(\nabla {\bf u})^\top\right)
 =
 \begin{bmatrix}
  \partial_{x_1} u_1
  &
  \tfrac{1}{2}\left(\partial_{x_1} u_2+\partial_{x_2} u_1\right)
  \\
  \tfrac{1}{2}\left(\partial_{x_1} u_2+\partial_{x_2} u_1\right)
  &
  \partial_{x_2} u_2
 \end{bmatrix}.
\]
The stress tensor is then given by 
\[
 \bm{\sigma}_{\mu,\lambda}({\bf u})
 :=
 2\mu \bm{\epsilon}({\bf u})
 +
 \lambda (\divv{\bf u}) I_2,
\]
where $I_2$ is the identity matrix of order $2$. The Lam\'e coefficients are assumed to satisfy
\[
 \mu_j>0,
 \qquad
 \lambda_j+\mu_j>0,\qquad j=1,2,
\]
which is equivalent to the positive definiteness of the two-dimensional
isotropic elasticity tensor and covers all physically admissible materials.
The time-harmonic elastic wave equation, or Navier equation, is
\[
 \divv\bm{\sigma}_{\mu,\lambda}({\bf u})
 +
 \omega^2{\bf u}
 =
 \mu\Delta {\bf u}
 +
 (\lambda+\mu)\nabla(\divv {\bf u})
 +
 \omega^2 {\bf u}
 =
 0,
\]
where $\omega\in\mathbb{R}^+$ is the frequency and the divergence operator is
applied row-wise to matrix fields.

Let $\Omega_2\subset\mathbb{R}^2$ be a bounded domain with boundary $\Gamma$,
where $\Gamma$ is a closed curve, and let
\[
 \Omega_1:=\mathbb{R}^2\setminus\overline{\Omega_2}
\]
denote the corresponding unbounded exterior domain. We consider a
time-harmonic elastic transmission scattering problem in $\mathbb{R}^2$ for two
linear, isotropic, homogeneous media occupying the domains $\Omega_j$,
$j=1,2$.

The Lam\'e coefficients $(\mu_j,\lambda_j)$ used below are understood to be
density-normalized coefficients. More precisely, the exterior density is taken
as reference, while the density contrast in the interior medium $\Omega_2$ is represented
by a positive parameter $\rho$. With this convention, the Navier equation in
each medium is written as
\[
 \divv\bm{\sigma}_{\mu_j,\lambda_j}({\bf u}_j)
 +
 \omega^2{\bf u}_j
 =
 0
 \qquad \text{in } \Omega_j,
 \quad j=1,2.
\]
Thus, if the physical density in the exterior medium is normalized to one and
the physical density in the interior medium is $\rho$, then the physical Lam\'e
coefficients in the interior medium are $( 
 \rho\mu_2,
 \rho\lambda_2)$.
Equivalently, the density contrast has been absorbed into the interior Lam\'e
coefficients in the differential equation, and it will reappear only in the
transmission condition for the physical traction.
The Cauchy data on $\Gamma$ consist of Dirichlet data, namely the
trace of the displacement field, and Neumann data, expressed in terms
of the traction operator. For sufficiently smooth fields defined in
$\Omega_j$, the traction operator is given by
\begin{equation*}
  \begin{multlined}
  {\rm T}_j{\bf u}_j
 :=
 \bm{\sigma}_{\mu_j,\lambda_j}({\bf u}_j)\nnn
 =
 \lambda_j \divv{\bf u}_j\,\nnn
 +
 2\mu_j \partial_{\nnn}{\bf u}_j
 -
 \mu_j(\curl{\bf u}_j)\,\ttt
 \quad\text{on } \Gamma,
 \\
 j=1,2,
 \qquad
 \curl {\bf u}:=\partial_{x_1}u_2-\partial_{x_2}u_1.
  \end{multlined}
\end{equation*}
Here $\nnn$ denotes the unit normal vector field on $\Gamma$, pointing toward the unbounded exterior domain $\Omega_1$ and $\partial_{\nnn}{\bf u}_j$ is the normal derivative understood componentwise. Moreover, $\ttt$ denotes the positively oriented unit tangent field on $\Gamma$:
\[
 \ttt:=-\mathrm Q\nnn,
 \qquad
 \mathrm Q:=
 \begin{bmatrix}
 0 & 1\\
 -1 & 0
 \end{bmatrix}.
\]
With this convention, $\ttt$ is counterclockwise oriented. When needed, both
vector fields are smoothly extended to a neighborhood of $\Gamma$. 

We now state the transmission problem. The scattering process is generated by
an incident elastic field ${\bf u}^{\rm inc}$ impinging on the bounded inclusion
$\Omega_2$. The incident field is assumed to satisfy the Navier equation with
the exterior material parameters in a neighborhood of $\overline{\Omega_2}$.
This includes, in particular, the standard cases of incident plane waves and
point-source elastic waves whose source lies in $\Omega_1$, the complement of  $\overline{\Omega_2}$.

We seek a pair of fields $({\bf u}_1,{\bf u}_2)$ satisfying
\begin{subequations}\label{eq:NavD}
\begin{equation}\label{eq:NavDa}
  \divv\bm{\sigma}_{\mu_j,\lambda_j}({\bf u}_j)
  +\omega^2{\bf u}_j=0
  \quad\text{in } \Omega_j,\qquad j=1,2.
\end{equation}
The total displacement field is then defined by
\[
{\bf u}:=
\begin{cases}
  {\bf u}_1+{\bf u}^{\rm inc}, & \text{in } \Omega_1,\\
  {\bf u}_2, & \text{in } \Omega_2.
\end{cases}
\]
The scattered field ${\bf u}_1$ satisfies the Kupradze radiation condition at
infinity \cite{AmKaLe:2009,kupradze2012three}. On the interface $\Gamma$, the
total field ${\bf u}$ satisfies the transmission conditions
\begin{align}
  \gamma {\bf u}_2
  &=
  \gamma {\bf u}_1+\gamma {\bf u}^{\rm inc},
  \label{eq:NavDb}
  \\
  \rho\,{\rm T}_2{\bf u}_2
  &=
  {\rm T}_1{\bf u}_1+{\rm T}_1{\bf u}^{\rm inc}.
  \label{eq:NavDb2}
\end{align}
\end{subequations}
Here $\gamma$ denotes the trace operator, taken from the appropriate side of
$\Gamma$.

The transmission problem \eqref{eq:NavD} is well-posed: it admits a unique
solution for all frequencies $\omega>0$ and all admissible Lam\'e parameters
$\lambda_j,\mu_j$, $j=1,2$; see, for instance,
\cite{costabel-stephan-elasticity:1990}.

\subsection{Helmholtz decomposition approach}
Our approach seeks the fields ${\bf u}_j$ in the Helmholtz decomposition form, that is
\begin{equation}\label{eq:Hdecomp2}
{\bf u}_j=\nabla u_p^{(j)}+\vcurl  {u_s^{(j)}},\ j=1,2\qquad \text{  with  }\vcurl{\varphi}:=\mathrm{Q}\nabla{\varphi}
\end{equation}
where the scalar fields $u_p^{(j)}$ and $u_s^{(j)}$ are solutions of the Helmholtz equations in the domains $\Omega_j$, $j=1,2$, respectively, with wavenumbers $k_p^{(j)}$ and $k_s^{(j)}$  defined as
\begin{equation}\label{eq:ks:kp}
(k_p^{(j)})^2:=\frac{\omega^2}{\lambda_j+2 \mu_j}, \quad (k_s^{(j)})^2:=\frac{\omega^2}{\mu_j}.
\end{equation}
In addition,   the exterior scalar fields $u_p^{(1)}$ and $u_s^{(1)}$
satisfy the Sommerfeld radiation conditions at infinity, a consequence of the Kupradze radiation condition. The enforcement of the continuity of the Dirichlet traces in the Helmholtz decomposition approach is rather straightforward, as it amounts to taking the scalar product of the decomposition~\eqref{eq:Hdecomp2} with the orthogonal frame $(\ttt, \nnn)$ on $\Gamma$.  Hence, denoting
\[
 \partial_{\nnn} u := \gamma(\nnn\cdot \nabla u) =- \gamma(\ttt\cdot \vcurl u),\quad
 \partial_{\ttt} u := \gamma(\ttt\cdot \nabla u) = \gamma(\nnn \cdot \vcurl u),
\]
condition \eqref{eq:NavDb} leads to the following system of boundary conditions that couples the Cauchy data on $\Gamma$ of the fields $u_p^{(j)}$ and $u_s^{(j)}$
\begin{equation}\label{eq:DH}
-
\begin{bmatrix}
{\bf u}^{\rm inc}\cdot\nnn\\
{\bf u}^{\rm inc}\cdot\ttt
\end{bmatrix}
=
\begin{bmatrix}
\partial_{\nnn}u_p^{(1)}+\partial_{\ttt}u_s^{(1)}\\
\partial_{\ttt}u_p^{(1)}-\partial_{\nnn}u_s^{(1)}
\end{bmatrix}
-
\begin{bmatrix}
\partial_{\nnn}u_p^{(2)}+\partial_{\ttt}u_s^{(2)}\\
\partial_{\ttt}u_p^{(2)}-\partial_{\nnn}u_s^{(2)}
\end{bmatrix}
\qquad {\rm on}\ \Gamma .
\end{equation}

The same approach can be applied to the enforcement of the continuity of Neumann traces on the boundary  \eqref{eq:NavDb2}. First,  we notice that
\begin{eqnarray*}
  {\rm T}_j[\nabla u_p^{(j)}] &=& \left(2\mu_j \Hes u_p^{(j)}+\lambda_j \Delta u_p^{(j)}\mathrm{I}\right)\nnn=
  \left(2\mu_j \Hes u_p^{(j)}-\lambda_j (k_p^{(j)})^2u_p^{(j)}\mathrm{I}\right)\nnn\\
  {\rm T}_j[\vcurl u_s^{(j)}] &=& {\rm Q}\left(2\mu_j \Hes u_s^{(j)}-\mu_j  \Delta u_s^{(j)}\mathrm{I} \right)\nnn={\rm Q}\left(2\mu_j \Hes u_s^{(j)}+\mu_j (k_s^{(j)})^2u_s^{(j)}\mathrm{I}\right)\nnn,\ j=1,2
\end{eqnarray*}
where $\Hes:=\nabla\nabla^\top$ is the Hessian  matrix operator. These relations lead immediately to formulas for the normal and tangential components of the traction operator applied to the vector fields ${\bf u}_j$:
\begin{equation}\label{eq:NH}
\left|
\begin{array}{rclrcl}
  {\rm T}_j  {{\bf u}_j}\cdot\nnn&=&2\mu_j \nnn^\top \Hes u_p^{(j)}{\bm n}-\lambda_j (k_p^{(j)})^2 u_p^{(j)}&+&2\mu_j
 \ttt^\top \Hes u_s^{(j)}{\bm n},
 \\
  {\rm T}_j  {{\bf u}_j}\cdot\ttt &=&2\mu_j \ttt^\top \Hes u_p^{(j)}{\bm n} &-& 2\mu_j \nnn^\top \Hes u_s^{(j)}{\bm n}-\mu_j (k_s^{(j)})^2 u_s^{(j)}.
\end{array}
\right.
\end{equation}

The expressions in \eqref{eq:NH} involve second-order normal derivatives of the potentials, packaged in the Hessian matrices. Unlike the Dirichlet and Neumann traces, these quantities are not available on $\Gamma$ from the boundary data alone. The key observation is that, since $u_p^{(j)}$ and $u_s^{(j)}$ solve Helmholtz equations, all second-order normal derivatives can be eliminated in favour of tangential derivatives, curvature terms and the Cauchy data, so that only the classical BIOs are needed. We now carry out this reduction.
Indeed, we start from the formula of the Laplacian in the $(\ttt,\nnn)$ frame
\[
\Delta u=\partial_{\nnn}^2 u+\kappa\ \partial_{\nnn}u+\partial_{\ttt}^2 u
\]
where $\kappa$ denotes the {signed  curvature on $\Gamma$}. That is,
\[
 \kappa\circ{\bf x} = \frac{1}{|{\bf x}'|^3}  \left({\bf x}'\right)^\top\mathrm{Q} {\bf x}''
\]
where ${\bf x}$ is any smooth regular parameterization. This yields
\[
\nnn^\top \Hes u_p^{(j)}{\bm n} =\partial_{\nnn}^2 u_p^{(j)}=-\kappa \partial_{\nnn}u_p^{(j)}-\partial_{\ttt}^2 u_p^{(j)}-(k_p^{(j)})^2 u_p^{(j)}
\]
and
\[
\nnn^\top \Hes u_s^{(j)}{\bm n} =\partial_{\nnn}^2 u_s^{(j)}=-\kappa \partial_{\nnn}u_s^{(j)}-\partial_{\ttt}^2 u_s^{(j)}-(k_s^{(j)})^2 u_s^{(j)},\ j=1,2.
\]
Also, using the formula for the gradient in the $(\ttt,\nnn)$ frame, that is $\nabla u =\partial_{\nnn}u\ \nnn + \partial_{\ttt} u\ \ttt$,
together with the Frenet-Serret identity $\partial_{\ttt}\nnn= \kappa\ttt$, we obtain
\[
\ttt^\top \Hes u_p^{(j)}{\bm n} = \partial_{\ttt}\partial_{\nnn}u_p^{(j)}-\kappa \partial_{\ttt} u_p^{(j)}\qquad \ttt^\top \Hes u_s^{(j)}{\bm n} = \partial_{\ttt}\partial_{\nnn}u_s^{(j)}-\kappa \partial_{\ttt} u_s^{(j)},\ j=1,2.
\]
In conclusion,  we reformulate Neumann conditions \eqref{eq:NavDb2} as
\begin{equation}\label{eq:NHT}
\left|
\begin{array}{rclll}
  \rho{\rm T}_2{\bf u}_2\cdot \nnn&=&  {\rm T}_1{\bf u}_1\cdot\nnn&+&  {\rm T}_1{\bf u}^{\rm inc}\cdot \nnn,
\\
 \rho {\rm T}_2{\bf u}_2\cdot \ttt&=&  {\rm T}_1{\bf u}_1\cdot\ttt&+&  {\rm T}_1{\bf u}^{\rm inc}\cdot \ttt,
\end{array}
\right.
\end{equation}
where  the normal and tangential components of the traction tensor are expressed as 
\begin{equation}\label{eq:NH_local}
\left|
\begin{array}{rclrcl}
  {\rm T}_j {\bf u}_j\cdot\nnn &=& -\omega^2 u_p^{(j)} -2\mu_j(\kappa  \partial_{\nnn}u_p^{(j)}+ \partial_{\ttt}^2 u_p^{(j)}) & +& 2\mu_j(\partial_{\ttt}\partial_{\nnn}u_s^{(j)}-\kappa \partial_{\ttt} u_s^{(j)}),&
 \\
  {\rm T}_j {\bf u}_j\cdot\ttt &=&2\mu_j (\partial_{\ttt}\partial_{\nnn}u_p^{(j)}-\kappa \partial_{\ttt} u_p^{(j)})&+&\omega^2 u_s^{(j)} + 2\mu_j(\kappa\ \partial_{\nnn}u_s^{(j)}+\ \partial_{\ttt}^2 u_s^{(j)}).& 
\end{array}
\right.
\end{equation}
We notice that in deriving \eqref{eq:NH_local}, we have used \eqref{eq:ks:kp}, namely
\begin{equation}\label{eq:Hdecomp2b}
(\lambda_j+2\mu_j)(k_p^{(j)})^2=\omega^2,
\qquad
\mu_j(k_s^{(j)})^2=\omega^2
\end{equation}
to simplify the expressions.
Relations \eqref{eq:Hdecomp2}--\eqref{eq:Hdecomp2b} will be used repeatedly in the rest of the paper to simplify the expressions, especially in Section~\ref{sec:preconditioning}.

Summarizing, from \eqref{eq:NH_local} and \eqref{eq:NHT}, the Neumann transmission conditions for the Helmholtz decomposition components of the solution are written as
\begin{equation}\label{eq:TnU}
  \begin{aligned}
  -
\begin{bmatrix}
 {\rm T}_1{\bf u}^{\rm inc}\cdot \nnn \\
 {\rm T}_1{\bf u}^{\rm inc}\cdot \ttt 
\end{bmatrix}&
\ = \ \begin{bmatrix}
 -\omega^2 u_p^{(1)} -2\mu_1(\kappa \partial_{\nnn}u_p^{(1)}+ \partial_{\ttt}^2 u_p^{(1)}) & +& 2\mu_1(\partial_{\ttt}\partial_{\nnn}u_s^{(1)}-\kappa \partial_{\ttt} u_s^{(1)})
 \\
  2\mu_1 (\partial_{\ttt}\partial_{\nnn}u_p^{(1)}-\kappa \partial_{\ttt} u_p^{(1)})&+&\omega^2 u_s^{(1)} 
  + 2\mu_1(\kappa\ \partial_{\nnn}u_s^{(1)}+\ \partial_{\ttt}^2 u_s^{(1)})
\end{bmatrix}\\
-\rho& 
\begin{bmatrix}
 -\omega^2 u_p^{(2)} -2\mu_2(\kappa\ \partial_{\nnn}u_p^{(2)}+ \partial_{\ttt}^2 u_p^{(2)})& +& 2\mu_2(\partial_{\ttt}\partial_{\nnn}u_s^{(2)}-\kappa \partial_{\ttt} u_s^{(2)}) \
 \\
2\mu_2 (\partial_{\ttt}\partial_{\nnn}u_p^{(2)}-\kappa \partial_{\ttt} u_p^{(2)})&+&\omega^2 u_s^{(2)} + 2\mu_2(\kappa\ \partial_{\nnn}u_s^{(2)}+ \partial_{\ttt}^2 u_s^{(2)})
\end{bmatrix}.
\end{aligned}
\end{equation}


\section{BIE formulations of the Helmholtz decomposition approach via Helmholtz BIOs}\label{GradHessCalculations}

\subsection{Helmholtz boundary layer operators}
For a given wave-number $k$ and a density $\varphi$ on the boundary $\Gamma$ we define the Helmholtz single and double layer potentials in the form
\[
 \SL_k[\varphi](\x):=\int_\Gamma \phi_k(\x-\y)\varphi(\y) {{\rm d}\y},\quad
 \DL_k[\varphi](\x):=\int_\Gamma \frac{\partial \phi_k(\x-\y)}{\partial \nnn(\y)}\varphi(\y) {{\rm d}\y},\ \x\in\mathbb{R}^2\setminus\Gamma
\]
where $k>0$ and
\[
 \phi_k(\x) = \frac{i}4 H_0^{(1)}(k|\x|)
\] 
is the fundamental solution of the Helmholtz equation.

The four BIOs of the Calder\'on calculus associated with  the Helmholtz equation are defined by applying the exterior/interior Dirichlet and Neumann traces on $\Gamma$ (denoted in what follows by $\gamma^{(1)}/\gamma^{(2)}$ and $\partial_{\nnn}^{(1)}/\partial_{\nnn}^{(2)}$ respectively) to the Helmholtz single and double layer potentials. We have then the classical relations
\cite{hsiao2008boundary,mclean:2000,Saranen}
\begin{equation}\label{eq:traces}
\begin{aligned}
{\gamma^{(j)}}  \SL_k\varphi &=\mathrm{V}_{k}\varphi, \quad                               &{\partial_{\nnn}^{(j)}} \SL_k\varphi &=\frac{{(-1)^{j }}}{2}\varphi +\mathrm{K}_{k}^\top\varphi,  \\
{\gamma^{(j)}} \DL_k\varphi &= \frac{{(-1)^{j+1}}}{2}\varphi+\mathrm{K}_{k}\varphi, \quad &{\partial_{\nnn}^{(j)}}\DL_k\varphi &=\mathrm{W}_{k}\varphi
\end{aligned}
\end{equation}
where, for $\x\in\Gamma$,
\begin{eqnarray*}
( \mathrm{V}_{k}\varphi)(\x) &:=&\int_\Gamma \phi_k(\x-\y)\varphi(\y) {{\rm d}\y},\\
( \mathrm{K}_k\varphi)(\x) &:=&\int_\Gamma \frac{\partial \phi_k(\x-\y)}{\partial \nnn(\y)}\varphi(\y) {{\rm d}\y},\\
( \mathrm{K}_k^\top\varphi)(\x)  &:=& \int_\Gamma \frac{\partial \phi_k(\x-\y)}{\partial \nnn(\x)}\varphi(\y) {{\rm d}\y},\\
( \mathrm{W}_k\varphi)(\x) &:=& \mathrm{f.p.}\int_\Gamma \frac{\partial^2 \phi_k( \x-\y )}{\partial \nnn(\x)\,\partial \nnn(\y) }\varphi(\y) {{\rm d}\y}=\partial_{\bm t}\mathrm{V}_{k}[\partial_{\bm t}\varphi](\x)+k^2
{\bm n}(\x)\cdot \mathrm{V}_{k}[\varphi \nnn].
\end{eqnarray*}
These operators are known as the single layer, double layer, adjoint double layer, and hypersingular operators.
In the definition of the hypersingular operator,  ``f.p.'' denotes the Hadamard {\em finite part} integral, since its kernel is hypersingular. Additionally, the second expression for $\mathrm{W}_k$ is the Maue formula, which rewrites the hypersingular operator in terms of tangential derivatives and the single layer operator.

Functional properties in Sobolev spaces $H^s(\Gamma)$ are well known in the literature; we refer the reader to \cite{SaVa:2002,mclean:2000,hsiao2008boundary}, for example. For a smooth boundary $\Gamma$,  
\[
\begin{aligned}
\mathrm{V}_k &: H^s(\Gamma)\to H^{s+1}(\Gamma),\qquad
\mathrm{K}_k,\ \mathrm{K}^\top_k&: H^s(\Gamma)\to H^{s+3}(\Gamma),\qquad
\mathrm{W}_k&:H^s(\Gamma)\to H^{s-1}(\Gamma)
\end{aligned}
\]
We then write $\mathrm{V}_k\in\mathrm{OPS}(-1)$, $\mathrm{W}_k\in\mathrm{OPS}(1)$ and $\mathrm{K}_k,\ \mathrm{K}_k^\top \in\mathrm{OPS}(-3)$. For smoothing operators, that is, for operators that map $H^s(\Gamma)$ to $H^{s+r}(\Gamma)$ for any $r>0$, we simply write $\mathrm{OPS}(-\infty)$.

If ${\bf x}:[0,L]\to\Gamma$ (where $L$ is the length of $\Gamma$) is an arc-length parameterization  of $\Gamma$ and $\varphi:\Gamma\to\mathbb{C}$ a sufficiently smooth function, it holds that
\[
\partial_{\bm t}\varphi\circ {\bf x}(t)= (\varphi\circ{\bf x})'(t). 
\]
We then introduce the derivative operator $\D$ as well as the Hilbert transform operator  $\HH$ on $\Gamma$ as basic blocks for the asymptotic expansions of the Helmholtz BIOs we will use in the next section written in terms of $L-$periodic Fourier series expansions with respect to the parameterization ${\bf x}$:
\begin{align*}
(\mathrm{D}\varphi)\circ{\bf x}(t)&:= (\varphi\circ {\bf x})'(t)= \frac{2\pi i}{L} \sum_{m\ne 0} m \widehat{(\varphi\circ{\bf x})}(m) \exp( i (2\pi m/L)t ),\\
 (\mathrm{H}\varphi)\circ{\bf x}(t) &:=  - i  \sum_{m<0} \widehat{(\varphi\circ{\bf x})}(m) \exp( i (2\pi m/L)t )+
 i\sum_{m\ge 0} \widehat{(\varphi\circ{\bf x})}(m) \exp( i (2\pi m/L)t )&\\
 & \hfill {\widehat{f}(m) := \frac{1}{ L }\int_0^{  L} f(t)\exp( -i (2\pi m/L)t )\,{\rm d}t}, \quad m\in\mathbb{Z}.
\end{align*} 
Using the identification of the Sobolev spaces $H^s(\Gamma)$ with the $L-$periodic Sobolev spaces $H^s_{\rm per}(0,L)$ cf. \cite[Ch. 8]{Kress}, we have that $\D\in\mathrm{OPS}(1)$ and $\HH\in\mathrm{OPS}(0)$. It is also noteworthy that $\mathrm{H}^2 = -\mathrm{I}$.

Then  
\begin{equation}\label{eq:expansions}
\begin{aligned}
 &\mathrm{V}_k - \left[\frac{1}{2}\HH\D_{-1}  -\frac{k^2}4 \HH\D_{-3}\right] \in\mathrm{OPS}(-5),\quad
  \mathrm{W}_k - \left[\frac{1}{2}\HH\D   +\frac{k^2}4 \HH\D_{-1}\right] \in\mathrm{OPS}(-3),\\
 &\mathrm{K}_k^\top - \left[-\frac{k^2\kappa}{4}\HH\D_{-3} \right], \mathrm{K}_k - \left[-\frac{k^2\kappa}{4}\HH\D_{-3} \right]   \in\mathrm{OPS}(-4),\\
 \end{aligned}
\end{equation}
where\footnote{The constant in the expansion of $\mathrm{K}_k$, $\mathrm{K}_k^\top$ corrects a misprint ($k^2\kappa/2$) in \cite[Prop.~3.7 and Thm.~A.3]{dominguez2022robust}; see \cite{arXiv:Robust}. It plays no role in what follows, since these terms are absorbed in the remainders.}, for any positive integer $r$, $\D_r = \D^r$ whereas for any integer $r<0$,
\[
 \D_r = (\D_{-1})^{-r},\quad (\D_{-1}\varphi)\circ{\bf x} =  \frac{L}{2\pi i} \sum_{m\ne 0} \frac{1}m \widehat{(\varphi\circ{\bf x})}(m) \exp( i (2\pi m/L)t  ).
\]
The last operator we will use is the mean value or averaging operator
\[
  \mathrm{J} \varphi = \frac{1}{L}\int_\Gamma  \varphi =  \widehat{\varphi}(0)
  \]
which obviously is an $\mathrm{OPS}(-\infty)$ operator.
For further details, we refer the reader to \cite{hsiao2008boundary}, and for a more comprehensive exposition on the 2D specific case, to \cite{dominguez2022robust}.

\subsection{BIE systems for the Helmholtz decomposition approach for the transmission problem}

Seeking the Helmholtz decomposition fields $(u_p^{(j)}, u_s^{(j)})$, $j=1,2$, as single-layer potentials of the form
\begin{equation}\label{eq:u:as:SL}
u_p^{(j)}={\rm SL}_{p_j}[\varphi^{(p)}]\qquad u_s^{(j)}={\rm SL}_{s_j}[\varphi^{(s)}]
\end{equation}
we find that imposing \eqref{eq:DH} leads to

\begin{eqnarray}
\begin{bmatrix}\partial_{\nnn} u_p^{(j)}+\partial_{\ttt}u_s^{(j)} \\ \partial_{\ttt}u_p^{(j)}-\partial_{\nnn}u_s^{(j)}\end{bmatrix}&=&{\cal A}_{\rm SL}^{(j)} \begin{bmatrix}\varphi^{(p)}\\ \varphi^{(s)}\end{bmatrix}\nonumber\\
{\cal A}^{(j)}_{\rm SL} &=&\frac{1}{2}\begin{bmatrix}
(-1)^j \I  &   \\
&(-1)^{j+1}\I 
\end{bmatrix} +\widetilde{{\cal A}}^{(j)}_{\rm SL}\nonumber\\
\widetilde{{\cal A}}^{(j)}_{\rm SL}&=&
\begin{bmatrix}
 \mathrm{K}_{p_j}^\top &  \partial_{\ttt} \mathrm{V}_{s_j}  \\
\partial_{\ttt} \mathrm{V}_{p_j}  &- \mathrm{K}_{s_j}^\top
\end{bmatrix},\ j=1,2.\label{eq:ASL} 
\end{eqnarray}
On the other hand, seeking the fields in the form of double-layer potentials
\[
u_p^{(j)}={\rm DL}_{p_j}[g^{(p)}]\qquad u_s^{(j)}={\rm DL}_{s_j}[g^{(s)}]
\]
we obtain that the densities $(g^{(p)},g^{(s)})$ must satisfy the following equations
\begin{eqnarray}
\begin{bmatrix}\partial_{\nnn} u_p^{(j)}+\partial_{\ttt}u_s^{(j)} \\ \partial_{\ttt}u_p^{(j)}-\partial_{\nnn}u_s^{(j)}\end{bmatrix}&=&{\cal A}_{\rm DL}^{(j)} \begin{bmatrix}g^{(p)}\\ g^{(s)}\end{bmatrix}\nonumber\\
{\cal A}^{(j)}_{\rm DL} &=&\frac{1}{2}\begin{bmatrix}
&(-1)^{j+1} \partial_{\ttt}       \\
 {(-1)^{j+1} }\partial_{\ttt} &
\end{bmatrix} +\widetilde{{\cal A}}^{(j)}_{\rm DL}\nonumber\\
\widetilde{{\cal A}}^{(j)}_{\rm DL} &=&
\begin{bmatrix}
\mathrm{W}_{p_j}& \partial_{\ttt} \mathrm{K}_{s_j}      \\
\partial_{\ttt} \mathrm{K}_{p_j}&-\mathrm{W}_{s_j}
\end{bmatrix},\ j=1,2.\label{eq:ADL}
 \end{eqnarray}

Let us now focus on applying the Neumann boundary conditions within the framework of the Helmholtz decomposition. 

In our earlier work \cite{dominguez2022robust} we derived Maue-type integration-by-parts formulas to apply these boundary conditions more effectively. These same Neumann boundary conditions, when restated through equations~\eqref{eq:NH_local}, are more easily applied to Helmholtz single and double layer potentials since they are expressed in terms of local boundary operators on the Cauchy data.
Specifically, seeking single-layer potential representations within the context of the Helmholtz decomposition leads immediately to the following formulas. For
\[
u_p^{(j)}={\rm SL}_{p_j}[\varphi^{(p)}]\qquad u_s^{(j)}={\rm SL}_{s_j}[\varphi^{(s)}],
\]
we obtain
\begin{eqnarray*}
\begin{bmatrix}
 {\rm T}_{j}{\bf u}_j\cdot\nnn \\
 {\rm T}_{j}{\bf u}_j\cdot\ttt
\end{bmatrix}
&=&
\mathcal{C}_{\rm SL}^{(j)}
\begin{bmatrix}
\varphi^{(p)}\\
\varphi^{(s)}
\end{bmatrix}
=
(-1)^j\mu_j
\begin{bmatrix}
-\kappa \I & \partial_{\ttt}\\
\partial_{\ttt} & \kappa \I
\end{bmatrix}
\begin{bmatrix}
\varphi^{(p)}\\
\varphi^{(s)}
\end{bmatrix}
+
\widetilde{\mathcal{C}}_{\rm SL}^{(j)}
\begin{bmatrix}
\varphi^{(p)}\\
\varphi^{(s)}
\end{bmatrix}
\\
\widetilde{\mathcal{C}}_{\rm SL}^{(j)}
&:=&
\begin{bmatrix}
(\mathcal{C}_{\rm SL}^{(j)})_{11} & (\mathcal{C}_{\rm SL}^{(j)})_{12}\\
(\mathcal{C}_{\rm SL}^{(j)})_{21} & (\mathcal{C}_{\rm SL}^{(j)})_{22}
\end{bmatrix}
\\
\left(\mathcal{C}_{\rm SL}^{(j)}\right)_{11} &:=&-\omega^2 \mathrm{V}_{p_j} -2\mu_j\Big(\kappa \mathrm{K}_{p_j}^\top  + \partial_{\ttt}^2 \mathrm{V}_{p_j}\Big),\\
\left(\mathcal{C}_{\rm SL}^{(j)}\right)_{12} &:=&2\mu_j \Big(\partial_{\ttt} \mathrm{K}_{s_j}^\top-\kappa \partial_{\ttt} \mathrm{V}_{s_j} \Big),\\
\left(\mathcal{C}_{\rm SL}^{(j)}\right)_{21} &:=&2\mu_j \Big(\partial_{\ttt} \mathrm{K}_{p_j}^\top-\kappa \partial_{\ttt} \mathrm{V}_{p_j} \Big),\\
\left(\mathcal{C}_{\rm SL}^{(j)}\right)_{22} &:=&\omega^2 \mathrm{V}_{s_j} +2\mu_j \Big( \kappa \mathrm{K}_{s_j}^\top + \partial_{\ttt}^2 \mathrm{V}_{s_j}\Big).
\end{eqnarray*}
Alternatively, if we write the solution in the form of double layer potentials
\[
u_p^{(j)}={\rm DL}_{p_j}[g^{(p)}]\qquad u_s^{(j)}={\rm DL}_{s_j}[g^{(s)}]
\]
the Neumann boundary conditions in the $(\nnn,\ttt)$ frame recast via equations~\eqref{eq:NH_local}--\eqref{eq:TnU} lead to the following formulas for $j=1,2$
\[
\begin{aligned}
\begin{bmatrix}
 {\rm T}_{j}{\bf u}_j\cdot\nnn \\
 {\rm T}_{j}{\bf u}_j\cdot\ttt
\end{bmatrix}
&=
\mathcal{C}_{\rm DL}^{(j)}
\begin{bmatrix}
g^{(p)}\\
g^{(s)}
\end{bmatrix}
=
(-1)^j
\begin{bmatrix}
\frac{1}{2}\omega^2 \mathrm{I} + \mu_j\partial_{\ttt}^2 & \mu_j \kappa \partial_{\ttt}\\
\mu_j\kappa \partial_{\ttt} & -\frac{1}{2}\omega^2 \mathrm{I} - \mu_j\partial_{\ttt}^2
\end{bmatrix}
\begin{bmatrix}
g^{(p)}\\
g^{(s)}
\end{bmatrix}
+
\widetilde{\mathcal{C}}_{\rm DL}^{(j)}
\begin{bmatrix}
g^{(p)}\\
g^{(s)}
\end{bmatrix}
\\
&
\widetilde{\mathcal{C}}_{\rm DL}^{(j)}:=
\begin{bmatrix}
(\mathcal{C}_{\rm DL}^{(j)})_{11} & (\mathcal{C}_{\rm DL}^{(j)})_{12}\\
(\mathcal{C}_{\rm DL}^{(j)})_{21} & (\mathcal{C}_{\rm DL}^{(j)})_{22}
\end{bmatrix}
\end{aligned}
\]
with
\begin{eqnarray*} 
\left(\mathcal{C}_{\rm DL}^{(j)}\right)_{11}&:=&-\omega^2 \mathrm{K}_{p_j}-2\mu_j\Big(\kappa \mathrm{W}_{p_j} + \partial_{\ttt}^2 \mathrm{K}_{p_j} \Big),\\
\left(\mathcal{C}_{\rm DL}^{(j)}\right)_{12}&:=&2\mu_j \Big(\partial_{\ttt} \mathrm{W}_{s_j}-\kappa \partial_{\ttt} \mathrm{K}_{s_j}  \Big),\\
\left(\mathcal{C}_{\rm DL}^{(j)}\right)_{21}&:=&2\mu_j \Big(\partial_{\ttt} \mathrm{W}_{p_j}-\kappa \partial_{\ttt} \mathrm{K}_{p_j}  \Big),\\
\left(\mathcal{C}_{\rm DL}^{(j)}\right)_{22}&:=&\omega^2 \mathrm{K}_{s_j} +2\mu_j \Big(\kappa \mathrm{W}_{s_j}  + \partial_{\ttt}^2 \mathrm{K}_{s_j} \Big).
\end{eqnarray*}

For analyzing the functional properties of the four matrix operators $\mathcal{A}_{\rm SL}^{(j)},$ $\mathcal{A}_{\rm DL}^{(j)},$ $\mathcal{C}_{\rm SL}^{(j)}$ and $\mathcal{C}_{\rm DL}^{(j)}$ introduced above  in the framework of Sobolev spaces we extend the notation introduced above and write ${\cal A}\in\mathrm{OPS}(r)$  when ${\cal A}:{\cal H}^s(\Gamma)\to {\cal H}^{s-r}(\Gamma)$ is continuous for any $s$, where
\[
{\cal H}^{s}(\Gamma) := H^s(\Gamma)\times H^s(\Gamma).
\]
We will also write
\[
{\cal A} = {\cal B} +\mathrm{OPS}(r)
\]
to mean that ${\cal A} - {\cal B} \in \mathrm{OPS}(r)$, where ${\cal A}$ and ${\cal B}$ are operators of order higher than $r$. We also follow the convention
\[
{\cal C}{\rm A} = \begin{bmatrix}
C_{11}{\rm A} & C_{12}{\rm A}\\
C_{21}{\rm A} & C_{22}{\rm A}
\end{bmatrix}
\]
for any matrix ${\cal C}$ and scalar operator ${\rm A}$. Typical examples of ${\rm A}$ include multiplication by a smooth function ${a}$, the operators $\D_r$ and $\HH$, as well as products of such operators.

With these conventions it is straightforward to prove the following result:

\begin{proposition}\label{prop:anathema}
For $j=1$ (exterior) and $j=2$ (interior), the following properties hold:
 \[
  {\cal A}^{(j)}_{\rm SL} \in\mathrm{OPS}(0), \quad
  {\cal C}^{(j)}_{\rm SL},\ {\cal A}^{(j)}_{\rm DL} \in\mathrm{OPS}(1),\quad
  {\cal C}^{(j)}_{\rm DL} \in\mathrm{OPS}(2).
 \]
 Furthermore,
 \[
 \begin{aligned}
  {\cal A}^{(j)}_{\rm SL} =&\frac{(-1)^{j}}{2}{\cal H}^{(j)} +
                                 \frac{1}4
                                  \begin{bmatrix}
                                       &(k_s^{(j)})^2 \HH\\
                                    (k_p^{(j)})^2 \HH &
                                   \end{bmatrix} (\HH \D_{-1})^2
+\mathrm{OPS}(-3),\\
   {\cal A}^{(j)}_{\rm DL}  =&   \frac{1}{2} {\cal H}^{(j)}\HH\D
  +\frac{1}{4}
\begin{bmatrix}
 (k_p^{(j)})^2\I & \\
                        & -(k_s^{(j)})^2\I
\end{bmatrix}  \HH \D_{-1} +{\mathrm{OPS}(-2)},
\\
 {\cal C}^{(j)}_{\rm SL}
    = &   -\mu_j{\cal H}^{(j)} \HH\D
                  -\mu_{j}\kappa(-1)^{j} 
                  {\cal H}^{(j)}
+{\mathrm{OPS}(-1)},
                                     \\
{\cal C}^{(j)}_{\rm DL}=&
 (-1)^{j+1}\mu_j{\cal H}^{(j)} (\HH\D)^2
                                   -\mu_j\kappa {\cal H}^{(j)}\HH\D
+ \mathrm{OPS}(0),
\end{aligned}
 \]
 with
 \[
  {\cal H}^{(j)} := \begin{bmatrix}
                                     \I &(-1)^{j }\HH\\
                                    (-1)^{j }\HH & - \I
                                   \end{bmatrix}.
 \]
\end{proposition}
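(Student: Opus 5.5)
The plan is to substitute the expansions \eqref{eq:expansions} into the block definitions \eqref{eq:ASL}, \eqref{eq:ADL} and the formulas for $\mathcal C^{(j)}_{\rm SL}$, $\mathcal C^{(j)}_{\rm DL}$. We then simplify entry by entry with the symbolic calculus of the commuting Fourier multipliers $\D$, $\D_{-1}$, $\HH$. The relations needed are $\HH^2=-\I$, $\D\D_{-1}=\I-\J$ with $\J\in\mathrm{OPS}(-\infty)$, and the identification $\partial_{\ttt}=\D$ in the arc-length parameterization. The only noncommuting ingredient is multiplication by $\kappa$. For it we use the commutator estimate $[a,\mathrm{A}]\in\mathrm{OPS}(r-1)$ for smooth $a$ and $\mathrm{A}\in\mathrm{OPS}(r)$ built from $\D_r,\HH$. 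It lets us move $\kappa$ to the left at the cost of one order.

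For $\mathcal A_{\rm SL}$ the steps are as follows.
\begin{itemize}
\item The diagonal entries $\mathrm K^\top_{k}$ lie in $\mathrm{OPS}(-3)$, so they are absorbed in the remainder.
\item For the off-diagonal entries we write $\partial_{\ttt}\mathrm V_k=\D[\tfrac12\HH\D_{-1}-\tfrac{k^2}{4}\HH\D_{-3}]+\mathrm{OPS}(-4)$, which equals $\tfrac12\HH-\tfrac{k^2}{4}\HH\D_{-2}$ modulo smoothing and $\mathrm{OPS}(-4)$ terms.
\item Using $(\HH\D_{-1})^2=-\D_{-2}$, this becomes $\tfrac12\HH+\tfrac{k^2}{4}\HH(\HH\D_{-1})^2$.
\item Adding $\tfrac{(-1)^j}{2}\mathrm{diag}(\I,-\I)$ and checking signs gives $\tfrac{(-1)^j}{2}\mathcal H^{(j)}$. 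The off-diagonal entries of $\mathcal H^{(j)}$ are $(-1)^j\HH$, and multiplying by $\tfrac{(-1)^j}{2}$ produces $\tfrac12\HH$, as required.
\end{itemize}

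For $\mathcal A_{\rm DL}$ we proceed the same way. We use $\mathrm W_k=\tfrac12\HH\D+\tfrac{k^2}{4}\HH\D_{-1}+\mathrm{OPS}(-3)$ and $\partial_{\ttt}\mathrm K_k\in\mathrm{OPS}(-2)$. The principal off-diagonal term $\tfrac{(-1)^{j+1}}{2}\D$ is rewritten as $\tfrac{(-1)^j}{2}\HH(\HH\D)$, which matches $\tfrac12\mathcal H^{(j)}\HH\D$.

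For $\mathcal C_{\rm SL}$ the steps are these.
\begin{itemize}
\item The term $\partial_{\ttt}^2\mathrm V_k=\tfrac12\HH\D+\mathrm{OPS}(-1)$ gives the $11$ entry $-\mu_j\HH\D-\mu_j(-1)^j\kappa$.
\item The terms $\kappa\mathrm K^\top$, $\omega^2\mathrm V$ and $\partial_{\ttt}\mathrm K^\top$ lie in $\mathrm{OPS}(-1)$ and are dropped.
\item The off-diagonal entries become $(-1)^j\mu_j\D-2\mu_j\kappa\cdot\tfrac12\HH$. The first term, rewritten as $-\mu_j(-1)^j\HH\cdot\HH\D$, matches $-\mu_j\mathcal H^{(j)}\HH\D$. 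The second matches $-\mu_j\kappa(-1)^j\cdot(-1)^j\HH$.
\end{itemize}

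$\mathcal C_{\rm DL}$ is analogous. We have $\partial_{\ttt}\mathrm W_k=\tfrac12\HH\D^2+\mathrm{OPS}(0)$ and $\mu_j\D^2=-\mu_j(\HH\D)^2$, which produces the $(\HH\D)^2$ term. The term $-2\mu_j\kappa\mathrm W$ yields $-\mu_j\kappa\HH\D$. On the off-diagonal, the local term $(-1)^j\mu_j\kappa\D$ combines with $-2\mu_j\kappa\D\mathrm K$, which lies in $\mathrm{OPS}(-2)$, and a commutator, giving $-\mu_j\kappa(-1)^j\HH\HH\D$. The $\omega^2$ terms are $\mathrm{OPS}(0)$.

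The mapping orders $\mathrm{OPS}(0),\mathrm{OPS}(1),\mathrm{OPS}(2)$ then follow immediately from these expansions. The main obstacle is careful bookkeeping of the signs $(-1)^j$ and of the $\kappa$-commutators. In particular one must verify that every discarded term, for example $\kappa\D\mathrm V_k$ versus $\D\kappa\mathrm V_k$, is of order at most the stated remainder. I would check each of the sixteen entries separately against the proposition.
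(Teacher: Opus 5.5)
Your proposal is correct and follows the route the paper intends. The paper gives no written proof and calls the result straightforward: substitute \eqref{eq:expansions} into the block definitions and simplify with $\HH^2=-\I$, $\D\D_{-1}=\I-\J$ and $\partial_{\ttt}=\D$, and your entrywise sign bookkeeping for all four operators checks out. One simplification: the commutator estimate is unnecessary, because multiplication by $\kappa$ already stands on the left in every block of $\mathcal C^{(j)}_{\rm SL}$, $\mathcal C^{(j)}_{\rm DL}$ and in the stated expansions, so it suffices that multiplication by a smooth function preserves each class $\mathrm{OPS}(r)$.
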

Let us point out that $\left({\cal H}^{(j)}\right)^2 = 0$, which ultimately implies that the underlying  operators ${\cal A}_{\mathrm{SL}}^{(j)}$, ${\cal A}_{\mathrm{DL}}^{(j)}$, ${\cal C}_{\mathrm{SL}}^{(j)}$ and ${\cal C}_{\mathrm{DL}}^{(j)}$   are defective: both their kernels and their cokernels  are infinite-dimensional. This indicates that the equations derived from \eqref{eq:DH}-\eqref{eq:NHT}, for any of the above representations of the solution or combinations thereof, are not well posed.


 \subsection{Combined field formulations}
 
We consider an {\em indirect} combined field approach to the elastic transmission problem within the Helmholtz decomposition framework. More precisely, in each domain $\Omega_j$, $j=1,2$, we seek the scalar fields in the Helmholtz decomposition in the form of an indirect combined field representation
\begin{subequations}\label{eq:representation}
\begin{align}
u^{(p)}_1 & = -\mathrm{SL}_{p_1}[\varphi^{(p)}] + \mathrm{DL}_{p_1}[g^{(p)}], &
u^{(s)}_1 & = -\mathrm{SL}_{s_1}[\varphi^{(s)}] + \mathrm{DL}_{s_1}[g^{(s)}] \quad\text{in } \Omega_1, \label{eq:representation:a} \\
u^{(p)}_2 & = \alpha\left( \mathrm{SL}_{p_2}[\varphi^{(p)}] - \mathrm{DL}_{p_2}[g^{(p)}]\right), &
u^{(s)}_2 & = \alpha\left( \mathrm{SL}_{s_2}[\varphi^{(s)}] - \mathrm{DL}_{s_2}[g^{(s)}]\right) \quad\text{in } \Omega_2. \label{eq:representation:b}
\end{align}
\end{subequations}
where $\alpha\ne 0$ is a coupling parameter at our disposal that can be chosen to optimize the robustness of the combined-field formulation. Using the results of Section~\ref{GradHessCalculations}, the enforcement of the transmission continuity conditions in the $(\nnn,\ttt)$ frame on $\Gamma$ leads to the following system of BIEs
\begin{equation}\label{eq:cfie_t}
\mathfrak{A}_{\alpha}
\begin{bmatrix}
{\bm \varphi}\\
{\bm g}
\end{bmatrix}
=
\begin{bmatrix}
{\bm f}\\
{\bm \lambda}
\end{bmatrix}
\end{equation}
where
\[
{\bm \varphi}:=\begin{bmatrix*}
\varphi^{(p)}\\ 
\varphi^{(s)}
\end{bmatrix*},\quad{\bm g}:=\begin{bmatrix*}
g^{(p)}\\
g^{(s)}
\end{bmatrix*},\
\quad
{\bm f} =
- \begin{bmatrix*}
{\bf u}^{\rm inc}\cdot\nnn\\
{\bf u}^{\rm inc}\cdot\ttt
  \end{bmatrix*},\quad
{\bm \lambda} =
- \begin{bmatrix*}
  {\rm T}_1 {\bf u}^{\rm inc}\cdot\nnn\\
  {\rm T}_1 {\bf u}^{\rm inc}\cdot\ttt
  \end{bmatrix*},\quad
\]
and
\begin{equation}\label{eq:cfie_tb}
\mathfrak{A}_{\alpha}
 =\mathfrak{A}^{(1)}-\alpha\mathfrak{A}^{(2)}  
:=
\begin{bmatrix} -{\cal A}^{(1)}_{\rm SL}  & {\cal A}^{(1)}_{\rm DL}  \\
 -{\cal C}^{(1)}_{\rm SL} &  {\cal C}^{(1)}_{\rm DL} 
\end{bmatrix} -\alpha 
\begin{bmatrix}
{\cal A}^{(2)}_{\rm SL}  &  - {\cal A}^{(2)}_{\rm DL}\\
\rho{\cal C}^{(2)}_{\rm SL}  &  - \rho{\cal C}^{(2)}_{\rm DL}
\end{bmatrix}.
\end{equation}

We will first prove the uniqueness of the solution of the BIE formulation \eqref{eq:cfie_t}. To achieve this, we will use the following result on the invertibility of a class of operators arising from BIE formulation.

\begin{theorem}\label{theo:transmission:helmholtz} 
  Let $k_1, k_2>0$, $\{\V_{k_j},\K_{k_j},\K_{k_j}^\top,  \W_{k_j}\}$, $j=1,2$ be the Helmholtz boundary integral operators associated with wavenumbers $k_1$ and $k_2$. Then, for any $\alpha\ne 0$ and $\rho>0$, the operator ${\cal L}_{\alpha,\rho}:H^s (\Gamma)\times H^{s-1}(\Gamma) \to H^s(\Gamma)\times H^{s-1}(\Gamma)$ given by
\begin{equation}\label{eq:Lformulation}
{\cal L}_{\alpha,\rho}\begin{bmatrix}
g\\
\varphi\\
\end{bmatrix}:= 
\frac{1}2 \begin{bmatrix}
(1-\alpha)g\\
(1-\alpha\rho )\varphi\\
\end{bmatrix} +\begin{bmatrix}
{\rm K}_{k_1}+\alpha {\rm K}_{k_2} & -{\rm V}_{k_1}-\alpha {\rm V}_{k_2}\\
{\rm W}_{k_1}+\alpha\rho {\rm W}_{k_2} & -{\rm K}^\top_{k_1}-\alpha  \rho{\rm K}^\top_{k_2}\\
\end{bmatrix} \begin{bmatrix}
g\\
\varphi\\
\end{bmatrix} 
\end{equation}
is invertible.
\end{theorem}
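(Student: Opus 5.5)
The plan is the classical one for Helmholtz transmission CFIEs: show that ${\cal L}_{\alpha,\rho}$ is Fredholm of index zero, then prove injectivity with a potential-theoretic argument. The Fredholm alternative then gives invertibility.

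\emph{Fredholm property.} Using the expansions \eqref{eq:expansions}, write, up to compact operators on $H^s(\Gamma)\times H^{s-1}(\Gamma)$,
\[
{\cal L}_{\alpha,\rho}\simeq\begin{bmatrix}\tfrac{1-\alpha}{2}\I & -\tfrac{1+\alpha}{2}\HH\D_{-1}\\ \tfrac{1+\alpha\rho}{2}\HH\D & \tfrac{1-\alpha\rho}{2}\I\end{bmatrix}.
\]
Here $\K_{k_j},\K_{k_j}^\top\in\mathrm{OPS}(-3)$ are compact, and the $k^2$ corrections are of lower order. Since $\HH\D_{-1}\HH\D=\HH^2=-\I$ modulo $\J$, the principal part can be inverted modulo $\mathrm{OPS}(-\infty)$. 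The Schur-type computation for the block matrix $\begin{bmatrix}a&-bX\\cY&d\end{bmatrix}$ with $XY=-\I$ involves the scalar $ad-bc$. Here this scalar equals
\[
\tfrac14\big[(1-\alpha)(1-\alpha\rho)-(1+\alpha)(1+\alpha\rho)\big]=-\tfrac{\alpha(1+\rho)}{2},
\]
which is nonzero since $\alpha\ne0$ and $\rho>0$. A parametrix is therefore obtained by applying $(ad-bc)^{-1}$ times the adjugate, $\begin{bmatrix}d & bX\\ -cY & a\end{bmatrix}$, on the left; multiplying out reproduces $(ad-bc)\,\I$ modulo smoothing terms, as required. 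Hence ${\cal L}_{\alpha,\rho}=\mathcal{P}+\mathcal{K}$ with $\mathcal P$ invertible and $\mathcal K$ compact, so ${\cal L}_{\alpha,\rho}$ is Fredholm of index zero.

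\emph{Injectivity, step 1.} Let ${\cal L}_{\alpha,\rho}[g,\varphi]^\top=0$. Define $u_1:=\DL_{k_1}g-\SL_{k_1}\varphi$ in $\Omega_1$ and $u_2:=\alpha(\SL_{k_2}\varphi-\DL_{k_2}g)$ in $\Omega_2$. By \eqref{eq:traces}, the first row of ${\cal L}_{\alpha,\rho}$ is exactly $\gamma^{(1)}u_1-\gamma^{(2)}u_2$, and the second row is $\partial_{\nnn}^{(1)}u_1-\rho\,\partial_{\nnn}^{(2)}u_2$. Thus $(u_1,u_2)$ solves a homogeneous Helmholtz transmission problem with radiating $u_1$. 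Green's identity in $\Omega_2$ gives
\[
\int_\Gamma \overline{u_1}\,\partial_{\nnn}u_1=\rho\int_{\Omega_2}\big(|\nabla u_2|^2-k_2^2|u_2|^2\big)\in\mathbb R,
\]
because $\rho>0$ and $k_2$ is real. Rellich's lemma then yields $u_1\equiv0$ in $\Omega_1$. Consequently $u_2$ has zero Cauchy data, and so $u_2\equiv0$ in $\Omega_2$.

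\emph{Injectivity, step 2 (the main obstacle).} The remaining step is to transfer this information back to the densities. Consider the complementary fields $\tilde u_1:=\DL_{k_1}g-\SL_{k_1}\varphi$ in $\Omega_2$ and $\tilde u_2:=\alpha(\SL_{k_2}\varphi-\DL_{k_2}g)$ in $\Omega_1$; the latter is radiating. The jump relations, combined with $u_1=0$ and $u_2=0$, give
\[
\gamma\tilde u_1=-g,\quad \partial_{\nnn}\tilde u_1=-\varphi,\qquad \gamma\tilde u_2=-\alpha g,\quad \partial_{\nnn}\tilde u_2=-\alpha\varphi.
\]
Therefore
\[
\int_\Gamma\overline{\tilde u_2}\,\partial_{\nnn}\tilde u_2=|\alpha|^2\int_\Gamma\overline{\tilde u_1}\,\partial_{\nnn}\tilde u_1=|\alpha|^2\int_{\Omega_2}\big(|\nabla\tilde u_1|^2-k_1^2|\tilde u_1|^2\big)\in\mathbb R.
\]
This holds even for complex $\alpha$, since only $|\alpha|^2$ appears. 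Rellich's lemma again gives $\tilde u_2\equiv0$ in $\Omega_1$, hence $\alpha g=\alpha\varphi=0$, and so $g=\varphi=0$ because $\alpha\neq 0$.

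Finally, the regularity needed to apply Green's formulas for $(g,\varphi)\in H^s\times H^{s-1}$ with low $s$ is handled by the parametrix above. It shows that kernel elements lie in $C^\infty\times C^\infty$, so it suffices to argue for smooth densities.
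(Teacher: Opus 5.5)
Your proposal is correct and follows essentially the same route as the paper: Fredholmness of index zero from an explicitly invertible principal part with determinant $-\alpha(1+\rho)/2$, then injectivity by first showing $u_1=0$ in $\Omega_1$ and $u_2=0$ in $\Omega_2$, and then applying Green's formula in $\Omega_2$ and Rellich's lemma to the complementary fields, whose Cauchy data differ by the factor $\alpha$. The differences are minor. You prove uniqueness of the homogeneous Helmholtz transmission problem directly, whereas the paper cites its well-posedness. Also, your principal part is not exactly invertible, since $\HH\D_{-1}$ and $\HH\D$ annihilate constants. To obtain the invertible $\mathcal P$ that your index-zero claim requires, you should do as the paper does and replace $\D_{-1},\D$ by $\D_{-1}+\J,\ \D+\J$, which is a finite-rank change.
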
\begin{proof} Notice first that, as a consequence of \eqref{eq:expansions}, the principal part of ${\cal L}_{\alpha,\rho}$ is given by
  \begin{equation}
 {\cal L}^{\rm p}_{\alpha,\rho}:=
\frac{1}2\begin{bmatrix*}[r]
(1-\alpha)\mathrm{I} &   -(1+\alpha) \mathrm{H}(\D_{-1}+\J)\\
(1+\rho \alpha)\mathrm{H}(\D +\J) & (1-\alpha\rho)\I
\end{bmatrix*}. 
\end{equation}
That is, ${\cal L}_{\alpha,\rho}-{\cal L}^{\rm p}_{\alpha,\rho}: H^s(\Gamma)\times H^{s-1}(\Gamma) \to H^s(\Gamma)\times H^{s-1}(\Gamma)$ is compact. Moreover, a direct calculation shows that
\[
   \left({\cal L}^{\rm p}_{\alpha,\rho}\right)^{-1}=
-\frac{1}{\alpha(\rho+1)}  \begin{bmatrix*}[r]
 (1-\alpha\rho)\I &  (1+\alpha ) \mathrm{H}(\D_{-1}+\J)\\
 -(1+\rho \alpha)\mathrm{H}(\D +\J) &  (1-\alpha)\mathrm{I}
\end{bmatrix*}.
\]
Therefore, ${\cal L}_{\alpha,\rho}$ is Fredholm of index zero.  

We will prove the injectivity of ${\cal L}_{\alpha,\rho}$ by applying a well-known argument that uses the jump relations for single and double layer potentials and the dual transmission problem. References for such arguments for $\alpha=\pm 1$ can be found in \cite{costabel-stephan,kress1978transmission}. We extend such results to any $\alpha\ne 0$ (and $\rho>0$) for the sake of completeness. Let $(g,\varphi)\in \ker({\cal L}_{\alpha,\rho})$, and define in $\mathbb{R}^2\setminus\Gamma$ the fields
\[
u_1:= -{\rm SL}_{k_1}[\varphi] +{\rm DL}_{k_1}[g],\quad  u_2:= \alpha \left( {\rm SL}_{k_2}[\varphi] - {\rm DL}_{k_2}[g]\right). 
\]Since $(g,\varphi)\in \ker({\cal L}_{\alpha,\rho})$, the pair $v_1 := u_1|_{\Omega_1}$, $v_2 := u_2|_{\Omega_2}$ solves the homogeneous transmission problem
\begin{equation}\label{eq:trhelmholt:01}
 \left\{
 \begin{array}{ll}
 \Delta v_1 + k_1^2 v_1 = 0, & \text{in } \Omega_1,\\
 \Delta v_2 + k_2^2 v_2 = 0, & \text{in } \Omega_2,\\
 \gamma^{(1)} v_1 - \gamma^{(2)} v_2 =  0, & \\
 \partial_{\bm n}^{(1)} v_1 -  \rho\, \partial_{\bm n}^{(2)} v_2 = 0, & \\
 \partial_{\bm r} v_1 - i k_1 v_1 = o(r^{-1/2}), &
 \end{array}
 \right.
\end{equation}
which is well posed. Hence $u_1=0$ in $\Omega_1$ and $u_2=0$ in $\Omega_2$, and the jump relations \eqref{eq:traces} yield
\[
\begin{aligned}
 \gamma^{(2)} u_1 &= -\big(\gamma^{(1)} u_1-\gamma^{(2)} u_1\big) = -g, &
 \partial_{\bm n}^{(2)} u_1 &= -\big(\partial_{\bm n}^{(1)} u_1-\partial_{\bm n}^{(2)} u_1\big) = -\varphi,\\
 \gamma^{(1)} u_2 &= \gamma^{(1)} u_2-\gamma^{(2)} u_2 = -\alpha g, &
 \partial_{\bm n}^{(1)} u_2 &= \partial_{\bm n}^{(1)} u_2-\partial_{\bm n}^{(2)} u_2 = -\alpha\varphi .
 \end{aligned}
\]
That is, the Cauchy data of $u_2|_{\Omega_1}$ are $\alpha$ times those of $u_1|_{\Omega_2}$, so that, by Green's formula in $\Omega_2$,
\[
 \int_{\Gamma} \partial_{\bm n}^{(1)} u_2\, \overline{u_2}
 = |\alpha|^2 \int_{\Gamma} \partial_{\bm n}^{(2)} u_1\, \overline{u_1}
 = |\alpha|^2\Big(\int_{\Omega_2}|\nabla u_1|^2 - k_1^2 \int_{\Omega_2}| u_1|^2\Big)\in\mathbb{R}.
\]
Since $u_2|_{\Omega_1}$ is a radiating solution of the Helmholtz equation with wavenumber $k_2$, applying Green's formula again, now in the intersection of $\Omega_1$ with the ball $B({\bf 0},R)$ for $R$ large enough, we obtain
\[
0=\Im \int_{\Gamma} \partial_{\bm n}^{(1)} u_2\, \overline{u_2}=\lim_{R\to \infty}
\Im  \int_{|{\bf x}|=R} \partial_{\bm r} u_2\, {\overline{u_2}}=k_2\lim_{R\to \infty}
 \int_{|{\bf x}|=R} |u_2|^2.
\]
Rellich's lemma (cf. \cite[Lemma 3.3.2]{SaVa:2002} or \cite[Lemma 2.12]{ColtonKress}) implies $u_2=0$ in $\Omega_1$. Therefore $g=-\alpha^{-1}\gamma^{(1)}u_2=0$ and $\varphi=-\alpha^{-1}\partial_{\bm n}^{(1)}u_2=0$, which proves the injectivity of ${\cal L}_{\alpha,\rho}$ and therefore its invertibility.
\end{proof}

We are ready to prove the uniqueness of the solution for the BIE formulation \eqref{eq:cfie_t} of our problem.
\begin{theorem}\label{theo:unicity}
Equation \eqref{eq:cfie_t} admits at most  one solution for any $\alpha\ne 0$ and $\rho> 0$.
\end{theorem}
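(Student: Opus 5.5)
Suppose $(\bm\varphi,\bm g)$ solves the homogeneous version of \eqref{eq:cfie_t}. We then mimic the proof of Theorem~\ref{theo:transmission:helmholtz}, now at the level of the elastic fields. Define in $\mathbb{R}^2\setminus\Gamma$ the scalar potentials $u_p^{(1)},u_s^{(1)},u_p^{(2)},u_s^{(2)}$ by the formulas \eqref{eq:representation}, extended to both sides of $\Gamma$. Set ${\bf u}_1:=\nabla u_p^{(1)}+\vcurl u_s^{(1)}$ and ${\bf u}_2:=\nabla u_p^{(2)}+\vcurl u_s^{(2)}$, also on both sides of $\Gamma$.

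\textbf{Step 1.} Each ${\bf u}_j$ solves the Navier equation with parameters $(\mu_j,\lambda_j)$ off $\Gamma$. This follows from \eqref{eq:ks:kp}, since $u_p^{(j)}$ and $u_s^{(j)}$ solve Helmholtz equations with wavenumbers $k_p^{(j)}$ and $k_s^{(j)}$. Moreover, ${\bf u}_1|_{\Omega_1}$ satisfies the Kupradze radiation condition, because its potentials are radiating. By construction, the vanishing of \eqref{eq:cfie_t} with zero data says exactly that \eqref{eq:DH} and \eqref{eq:TnU} hold with ${\bf u}^{\rm inc}=0$. The derivation of \eqref{eq:NH_local} is an identity valid for any Helmholtz potentials, so the pair $({\bf u}_1|_{\Omega_1},{\bf u}_2|_{\Omega_2})$ solves the homogeneous elastic transmission problem \eqref{eq:NavD}. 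Well-posedness of that problem then gives ${\bf u}_1=0$ in $\Omega_1$ and ${\bf u}_2=0$ in $\Omega_2$.

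\textbf{Step 2: recovering vanishing scalar potentials.} This is the main obstacle. A zero displacement ${\bf u}=\nabla u_p+\vcurl u_s$ does not immediately force $u_p=u_s=0$. However, the identities $\divv{\bf u}=\Delta u_p=-(k_p)^2u_p$ and $\curl{\bf u}=-\Delta u_s=(k_s)^2 u_s$ (up to sign conventions with $\mathrm Q$) do force it. Since the potentials are Helmholtz solutions with nonzero wavenumbers, we get $u_p^{(1)}=u_s^{(1)}=0$ in $\Omega_1$ and $u_p^{(2)}=u_s^{(2)}=0$ in $\Omega_2$. The problem thus decouples into two scalar families, the p-family (wavenumbers $k_p^{(1)},k_p^{(2)}$, densities $g^{(p)},\varphi^{(p)}$) and the s-family.

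\textbf{Step 3.} For each family we now have, say for $p$, $u_1:=-\SL_{k_1}\varphi+\DL_{k_1}g$ vanishing in $\Omega_1$ and $u_2:=\alpha(\SL_{k_2}\varphi-\DL_{k_2}g)$ vanishing in $\Omega_2$, with $k_1=k_p^{(1)}$ and $k_2=k_p^{(2)}$. From here we repeat verbatim the second half of the proof of Theorem~\ref{theo:transmission:helmholtz}. The jump relations \eqref{eq:traces} show that the Cauchy data of $u_2|_{\Omega_1}$ equal $\alpha$ times those of $u_1|_{\Omega_2}$. Green's formula in $\Omega_2$ makes $\int_\Gamma\partial_{\nnn}^{(1)}u_2\,\overline{u_2}$ real, and Rellich's lemma then gives $u_2=0$ in $\Omega_1$. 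Hence $g=\varphi=0$; note that $\rho$ plays no role at this stage. The same argument applies to the s-family.

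Alternatively, after Step 2 one can observe that the traces give $\mathcal L_{\alpha,1}(g,\varphi)=0$ and invoke Theorem~\ref{theo:transmission:helmholtz} directly with $\rho=1$. I would present it that way for brevity.

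The delicate point to verify carefully is in Step 1: that the local-frame identities \eqref{eq:NH_local} are applied correctly on each side of $\Gamma$. The sign conventions of \eqref{eq:cfie_tb}, in particular the placement of $\rho$ and $\alpha$, must match \eqref{eq:NavDb2}.
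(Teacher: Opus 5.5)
Your proposal is correct and follows essentially the paper's route: uniqueness for the elastic transmission problem, then divergence and curl to make the scalar potentials vanish in $\Omega_j$, then vanishing Cauchy data force each pair $(g,\varphi)$ to lie in the kernel of the operator of Theorem~\ref{theo:transmission:helmholtz}. Taking $\rho=1$, as you suggest, rather than the physical $\rho$ used in the paper makes no difference, since vanishing Cauchy data annihilate $\mathcal{L}_{\alpha,\rho'}$ for every $\rho'>0$; your inline jump-relation/Rellich argument is just the second half of that theorem's proof.
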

\begin{proof}  
Let $({\bm \varphi},{\bm g})$ be a solution of \eqref{eq:cfie_t} with homogeneous data, and let $u^{(p)}_j$ and $u^{(s)}_j$ be defined by \eqref{eq:representation}. By uniqueness of the transmission problem, we have
\[
\nabla u^{(p)}_j + \vcurl u^{(s)}_j = {\bf 0},
\qquad \text{in } \Omega_j, \quad j=1,2.
\]
Taking the divergence, we obtain
\[
\Delta u^{(p)}_j
=
\divv\big(\nabla u^{(p)}_j + \vcurl u^{(s)}_j\big)
=0,\quad \text{in } \Omega_j, \quad j=1,2.
\]
Since $u^{(p)}_j$ also satisfies the Helmholtz equation in $\Omega_j$, it follows that actually $u^{(p)}_j=0$ in $\Omega_j$. 

Hence the Cauchy data of $u^{(p)}_1$ and $u^{(p)}_2$ on $\Gamma$ vanish and, by \eqref{eq:representation:a}, the pair $(g^{(p)},\varphi^{(p)})$ solves the homogeneous system of BIEs
\begin{equation}\label{eq:Lformulation:principal:part}
\frac{1}2 \begin{bmatrix}
(1-\alpha)g^{(p)}\\
(1-\alpha\rho )\varphi^{(p)}\\
\end{bmatrix} +\begin{bmatrix}
{\rm K}_{p_1}+\alpha {\rm K}_{p_2} & -{\rm V}_{p_1}-\alpha {\rm V}_{p_2}\\
{\rm W}_{p_1}+\alpha\rho {\rm W}_{p_2} & -{\rm K}^\top_{p_1}-\alpha  \rho{\rm K}^\top_{p_2}\\
\end{bmatrix} \begin{bmatrix}
g^{(p)}\\
\varphi^{(p)}\\
\end{bmatrix}
=\begin{bmatrix}
  0\\
  0
\end{bmatrix},
\end{equation}
which is \eqref{eq:Lformulation} with $k_j=k_p^{(j)}$.
By Theorem \ref{theo:transmission:helmholtz}, this system is uniquely solvable, which implies that $g^{(p)}=\varphi^{(p)}=0$.

The argument for $u_j^{(s)}$ is analogous. Indeed, taking the curl in
\[
\nabla u^{(p)}_j + \vcurl u^{(s)}_j = {\bf 0},
\]
we deduce
\[
0=\curl\big(\nabla u^{(p)}_j + \vcurl u^{(s)}_j\big) = -\Delta u^{(s)}_j.
\]
Since $u^{(s)}_j$ also satisfies the Helmholtz equation in $\Omega_j$, we conclude that $u^{(s)}_j=0$ in $\Omega_j$. Repeating the same argument as above for the shear wave component, we obtain $\varphi^{(s)}=0$ and $g^{(s)}=0$.
\end{proof}

Existence of a solution will be derived as a consequence of Theorem \ref{theo:01}; see Corollary \ref{cor:01}.

In what follows, we refer to the CFIE formulation~\eqref{eq:cfie_t} as $(\nnn,\ttt)$-CFIE$_\alpha$.   It is easy to conclude from Proposition \ref{prop:anathema} that $\mathfrak{A}_{\alpha}:  \mathcal{H}^{s}(\Gamma)\times \mathcal{H}^{s+1}(\Gamma) \to  \mathcal{H}^{s} (\Gamma)\times \mathcal{H}^{s-1}(\Gamma)$.

We will now turn our attention to the principal part of $\mathfrak{A}_{\alpha}$. For this purpose, we introduce the following notation: For $\mathrm{A}$ a scalar operator, we define the matrix operator
\begin{equation}\label{eq:notation:cal}
{\cal A } := {\rm I}\otimes {\mathrm{A}} = \begin{bmatrix}
\mathrm{A} & \\
          &\mathrm{A}
\end{bmatrix}.
\end{equation}
We will apply this convention to $\HH$, $\D_r$ and $\J$ to define ${\cal H}$, ${\cal D}_r$, ${\cal I}$ and ${\cal J}$. We also introduce the operator,
\begin{equation}\label{eq:Hbeta}
\mathcal{H}_{\bm{\beta}}= 
\begin{bmatrix}
\beta_1\I & \beta_2\HH\\
\beta_2\HH & -\beta_1 \I 
\end{bmatrix},\qquad \bm{\beta} = (\beta_1,\beta_2).
\end{equation}
Observe that this notation extends that for ${\cal H}^{(j)}$ used in Proposition \ref{prop:anathema}, since ${\cal H}^{(1)} = \mathcal{H}_{(1,-1)}$ and ${\cal H}^{(2)} = \mathcal{H}_{(1,1)}$. Notice also that
\begin{equation}\label{eq:Hinvbeta}
  r \mathcal{H}_{\bm{\beta}}+s \mathcal{H}_{\bm{\gamma}} =  
\mathcal{H}_{r\bm{\beta}+s\bm{\gamma}},\quad
\mathcal{H}^{-1}_{\bm{\beta}} = \frac{1}{\beta_1^2-\beta_2^2}  
\mathcal{H}_{\bm{\beta}}.
\end{equation}
In particular, $\mathcal{H}_{\bm{\beta}}$  is invertible if and only if $\beta_1^2-\beta_2^2\ne 0$.

With this notation it is straightforward to check that the principal part of $\mathfrak{A}_{\alpha}$  in \eqref{eq:cfie_tb} is given by 
\[
\mathfrak{A}_{\rm p,\alpha}:=
\mathfrak{A}_{\rm p}^{(1)} - \alpha \mathfrak{A}_{\rm p}^{(2)} + \mathrm{OPS}(-1)
\]
with
\[
\begin{aligned}
\mathfrak{A}_{\rm p}^{(1)}& :=
\frac{1}2 \begin{bmatrix}
{\cal I} & \\
&  2\mu_1\mathcal{H}(\mathcal{D}+{\cal \J}) 
\end{bmatrix}
\begin{bmatrix*}[r]
\mathcal{H}_{(1,-1)} &    \mathcal{H}_{(1,-1)}\\
 \mathcal{H}_{(1,-1)} &  \mathcal{H}_{(1,-1)} 
\end{bmatrix*}
\begin{bmatrix}
{\cal I} & \\
&\mathcal{H}(\mathcal{D}+{\cal \J}) 
\end{bmatrix},\\
\mathfrak{A}_{\rm p}^{(2)} & :=
\frac{1}2 \begin{bmatrix}
{\cal I} & \\
& 2\rho\mu_2 \mathcal{H}(\mathcal{D}+{\cal \J}) 
\end{bmatrix}
\begin{bmatrix*}[r]
\mathcal{H}_{(1,1)} &  - \mathcal{H}_{(1,1)}\\
-\mathcal{H}_{(1,1)} &  \mathcal{H}_{(1,1)} 
\end{bmatrix*}
\begin{bmatrix}
{\cal I} & \\
& \mathcal{H}(\mathcal{D}+{\cal \J}) 
\end{bmatrix}.
\end{aligned}
\] 
\begin{proposition}
Let $\mathfrak{A}_{\mathrm{p},\alpha}$ be the principal part of
$\mathfrak{A}_{\alpha}:\mathcal{H}^{s}(\Gamma)\times \mathcal{H}^{s+1}(\Gamma) \to \mathcal{H}^{s}(\Gamma)\times \mathcal{H}^{s-1}(\Gamma)$.
Then, the kernel of $\mathfrak{A}_{\mathrm{p},\alpha}$ is given by
\[
\left\{ (-(\D + \J)f,0,0,f)^\top + (0,(\D + \J)g,g,0)^\top \ :\ f,g \in H^{s+1}(\Gamma) \right\}
\subset \mathcal{H}^{s}(\Gamma)\times \mathcal{H}^{s+1}(\Gamma),
\]
and is therefore independent of the choice of $\alpha$ and $\rho$. In particular, $\ker(\mathfrak{A}_{\mathrm{p},\alpha})$ is infinite-dimensional.

Furthermore, the cokernel of $\mathfrak{A}_{\mathrm{p},\alpha}$ is also infinite-dimensional, although it depends on the choice of $\alpha$ and $\rho$.

Consequently, $\mathfrak{A}_{\alpha}:\mathcal{H}^{s}(\Gamma)\times \mathcal{H}^{s+1}(\Gamma) \to \mathcal{H}^{s}(\Gamma)\times \mathcal{H}^{s-1}(\Gamma)$ is not a Fredholm operator.
\end{proposition}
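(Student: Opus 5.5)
The plan is to factor the principal part. The two outer diagonal factors are invertible, which reduces everything to the inner $2\times2$ block matrices built from $\mathcal{H}_{(1,\mp1)}$.

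\textbf{Factorization.} Write $\mathcal{E}:=\mathcal{H}(\mathcal{D}+\mathcal{J})$. Since $\HH(\D+\J)$ is invertible with inverse $-(\D_{-1}+\J)\HH$ (up to sign conventions, using $\HH^2=-\I$ on the zero mode), the right factor $\mathrm{diag}(\mathcal{I},\mathcal{E})$ is an isomorphism $\mathcal{H}^s\times\mathcal{H}^{s+1}\to\mathcal{H}^s\times\mathcal{H}^s$. The left factors $\mathrm{diag}(\mathcal{I},2\mu_1\mathcal{E})$ and $\mathrm{diag}(\mathcal{I},2\rho\mu_2\mathcal{E})$ differ only by the scalar in the second row.

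I will therefore write
\[
\mathfrak{A}_{\mathrm{p},\alpha}=\tfrac12\begin{bmatrix}\mathcal{I}&\\&\mathcal{E}\end{bmatrix}\begin{bmatrix}\mathcal{H}_{(1,-1)}-\alpha\mathcal{H}_{(1,1)}&\mathcal{H}_{(1,-1)}+\alpha\mathcal{H}_{(1,1)}\\ 2\mu_1\mathcal{H}_{(1,-1)}+2\alpha\rho\mu_2\mathcal{H}_{(1,1)}&2\mu_1\mathcal{H}_{(1,-1)}-2\alpha\rho\mu_2\mathcal{H}_{(1,1)}\end{bmatrix}\begin{bmatrix}\mathcal{I}&\\&\mathcal{E}\end{bmatrix}.
\]
By \eqref{eq:Hinvbeta}, the entries of the middle matrix are $\mathcal{H}_{(1-\alpha,-1-\alpha)}$, $\mathcal{H}_{(1+\alpha,-1+\alpha)}$, and so on. Writing the densities as $(\bm\varphi,\bm\psi)$ with $\bm\psi=\mathcal{E}\bm g$, the kernel is the kernel of the middle matrix $\mathfrak{M}$ pulled back through $\mathcal{E}^{-1}$.

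\textbf{Kernel.} $\mathfrak{M}(\bm\varphi,\bm\psi)=0$ means that $\mathcal{H}_{(1,-1)}(\bm\varphi+\bm\psi)$ and $\mathcal{H}_{(1,1)}(\bm\varphi-\bm\psi)$ satisfy the $2\times2$ scalar system with matrix $\begin{bmatrix}1&-\alpha\\2\mu_1&2\alpha\rho\mu_2\end{bmatrix}$. Its determinant is $2\alpha(\rho\mu_2+\mu_1)\neq0$, so both quantities vanish: $\mathcal{H}_{(1,-1)}(\bm\varphi+\bm\psi)=0$ and $\mathcal{H}_{(1,1)}(\bm\varphi-\bm\psi)=0$.

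Since $\mathcal{H}_{(1,\mp1)}=\begin{bmatrix}\I&\mp\HH\\\mp\HH&-\I\end{bmatrix}$ and $\HH^2=-\I$, the first row already implies the second. Hence $\ker\mathcal{H}_{(1,-1)}=\{(\HH w,w)\}$ and $\ker\mathcal{H}_{(1,1)}=\{(-\HH w,w)\}$. Solving $\bm\varphi+\bm\psi=(\HH a,a)$ and $\bm\varphi-\bm\psi=(-\HH b,b)$, I then express $\bm g=\mathcal{E}^{-1}\bm\psi$. Reparametrising $a,b$ through $f,g\in H^{s+1}$ should produce exactly the two families $(-(\D+\J)f,0,0,f)$ and $(0,(\D+\J)g,g,0)$. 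As a check, plugging these back in gives $\mathcal{H}_{(1,\mp1)}$ applied to vectors of the form $(\pm\HH w,w)$, which vanish. The result is visibly independent of $\alpha$ and $\rho$ and infinite-dimensional.

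\textbf{Cokernel and non-Fredholmness.} The range of $\mathfrak{A}_{\mathrm{p},\alpha}$ is the image under the invertible left factor of the range of $\mathfrak{M}$. $\mathfrak{M}$ factors as the invertible scalar matrix above (tensored with $\mathcal{I}$) composed with the map $(\bm\varphi,\bm\psi)\mapsto(\mathcal{H}_{(1,-1)}(\bm\varphi+\bm\psi),\mathcal{H}_{(1,1)}(\bm\varphi-\bm\psi))$. The latter has range $\mathrm{ran}\,\mathcal{H}_{(1,-1)}\times\mathrm{ran}\,\mathcal{H}_{(1,1)}$, each factor of which has infinite codimension because $\mathcal{H}_{(1,\mp1)}^2=0$ forces the range into the kernel. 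Transporting back through the scalar matrix, whose entries involve $\alpha,\rho,\mu_j$, shows that the cokernel is infinite-dimensional and depends on $\alpha,\rho$.

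Finally, $\mathfrak{A}_\alpha-\mathfrak{A}_{\mathrm{p},\alpha}\in\mathrm{OPS}(-1)$ is compact between the stated spaces. If $\mathfrak{A}_\alpha$ were Fredholm, then $\mathfrak{A}_{\mathrm{p},\alpha}$ would be Fredholm too, contradicting the infinite-dimensional kernel.

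The main obstacle is the bookkeeping of the zero mode ($\J$) and the exact signs when inverting $\mathcal{E}$, so that the kernel comes out in precisely the stated parametrisation. The structure of the argument itself is straightforward.
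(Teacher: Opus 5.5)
Your proof is correct, but it identifies the kernel differently from the paper. The paper first checks that the two families lie in $\ker\mathfrak{A}_{\rm p}^{(1)}\cap\ker\mathfrak{A}_{\rm p}^{(2)}$, which makes the independence of $\alpha,\rho$ immediate. It then splits an arbitrary element as (first family) $+$ (second family) $+\,(0,0,h)$, and kills $h$ using only the first block row, since $\mathcal{H}_{(1,-1)}+\alpha\mathcal{H}_{(1,1)}=\mathcal{H}_{(1+\alpha,\alpha-1)}$ is invertible for $\alpha\neq0$.

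You instead set $X=\mathcal{H}_{(1,-1)}(\bm\varphi+\bm\psi)$ and $Y=\mathcal{H}_{(1,1)}(\bm\varphi-\bm\psi)$. You then write the middle matrix as the invertible scalar matrix $S=\begin{bmatrix}1&-\alpha\\2\mu_1&2\alpha\rho\mu_2\end{bmatrix}$ (tensored with $\mathcal{I}$) applied to $(X,Y)$. This uses both rows and needs $\mu_1+\rho\mu_2\neq0$. Its payoff is that it gives kernel and range at once, so your cokernel argument is an actual proof; the paper only asserts that the range is a graph over two components and omits the details.

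To finish the kernel, put $F=(\D+\J)f$ and $G=(\D+\J)g$. The stated families then correspond to $a=G+\HH F$, $b=G-\HH F$. This is a bijection of $H^s\times H^s$, with inverse $G=\tfrac12(a+b)$, $F=-\tfrac12\HH(a-b)$. It gives the reverse inclusion, which your plug-in check does not.

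One caveat concerns the claim that the cokernel depends on $\alpha$. Observing that the entries of $S$ involve $\alpha,\rho,\mu_j$ does not show the cokernel depends on them, and for $\alpha$ the claim is false. Put $Y'=-\alpha Y$; your factorization then gives $\mathrm{ran}\,\mathfrak{M}=\{(X+Y',\,2\mu_1X-2\rho\mu_2Y')\}$. Here $X$ ranges over $\mathrm{ran}\,\mathcal{H}_{(1,-1)}=\{(u,-\HH u)\}$ and $Y'$ over $\mathrm{ran}\,\mathcal{H}_{(1,1)}=\{(u,\HH u)\}$. These two subspaces are complementary in $\mathcal{H}^s(\Gamma)$. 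Hence $\mathrm{ran}\,\mathfrak{M}$, and therefore $\mathrm{ran}\,\mathfrak{A}_{\mathrm{p},\alpha}$, is the graph of $2\mu_1P_1-2\rho\mu_2P_2$ over the first block, with $P_1,P_2$ the associated projections. It depends on $\mu_1$ and $\rho\mu_2$ but not on $\alpha\neq0$. So that clause of the statement, which the paper does not prove either, holds for $\rho$ only. The infinite codimension and the non-Fredholm conclusion are unaffected.
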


\begin{proof}
It is easy to check that for any $f,g \in H^{s+1}(\Gamma)$, we have
\[
(-(\D + \J)f,0,0,f)^\top, \quad
(0,(\D + \J)g,g,0)^\top
\in\ker(\mathfrak{A}_{\rm p}^{(j)}), \quad j=1,2.
\]
Now, let $(\bm{\varphi},\bm{g}) \in \mathcal{H}^{s}(\Gamma)\times \mathcal{H}^{s+1}(\Gamma)$ be an arbitrary element, which can be written as
\[
(\bm{\varphi},\bm{g}) = (-(\D + \J)f_1, ( \D + \J)f_2, f_3, f_4)^\top, \quad f_i \in H^{s+1}(\Gamma).
\]
Notice that we can decompose it as
\[
(\bm{\varphi},\bm{g}) = (-(\D + \J)f_1,0,0,f_1)^\top + (0,(\D + \J)f_2,f_2,0)^\top + (0,0,f_3-f_2,f_4-f_1)^\top.
\]
Since the first two terms belong to the kernel of $\mathfrak{A}_{\rm p}^{(j)}$, we have that $(\bm{\varphi},\bm{g}) \in \ker(\mathfrak{A}_{\rm p,\alpha})$ if and only if (cf. \eqref{eq:Hinvbeta})
\[
(f_3-f_2, f_4-f_1)^\top \in \ker(\mathcal{H}_{(1,-1)} + \alpha \mathcal{H}_{(1,1)})\cap\ker(\mathcal{H}_{(1,-1)} - r \mathcal{H}_{(1,1)}), \quad r = \alpha\rho \frac{\mu_2}{\mu_1} \ne 0,
\]
the two conditions coming from the first and second block rows of $\mathfrak{A}_{\rm p,\alpha}$, respectively. Since $\mathcal{H}_{(1,-1)} + \alpha \mathcal{H}_{(1,1)}=\mathcal{H}_{(1+\alpha,-1+\alpha)}$ is invertible by \eqref{eq:Hinvbeta} ($(1+\alpha)^2-(\alpha-1)^2=4\alpha\ne0$), we conclude that $f_3=f_2$ and $f_4=f_1$, which proves the first part.

For the cokernel, one checks in the same way that the range of $\mathfrak{A}_{\rm p,\alpha}$ is the graph of a bounded operator over two of the four components, so that it has infinite codimension; its description depends on $\alpha$, $\rho$, $\mu_1$, $\mu_2$ and we omit it.
\end{proof}

Thus, the $(\nnn,\ttt)$-CFIE$_\alpha$ is not a Fredholm equation: \eqref{eq:cfie_tb} is not boundedly invertible and, upon discretization, is expected to yield ill-conditioned linear systems. Moreover, iterative solvers are expected to require many iterations, with performance deteriorating further as the discretization is refined. Therefore, effective preconditioners are essential in practice for the efficient solution of the resulting linear systems.

 \section{Preconditioning}
\label{sec:preconditioning}
 The aim of this section is to propose a pair of preconditioners that give rise to a well-posed formulation. In other words, we seek to define easy-to-construct and discretizable preconditioners $\mathfrak{P}^{\rm left}$ and $\mathfrak{P}^{\rm right}$ such that
\[
  \mathfrak{P}^{\rm left}  \mathfrak{A}_\alpha  \mathfrak{P}^{\rm right} :{\cal H}^{s}(\Gamma)\times {\cal H}^{s}(\Gamma)\to {\cal H}^{s}(\Gamma)\times {\cal H}^{s}(\Gamma)
\]
is not only continuous and invertible but also a compact perturbation of the identity, endowing the operator with a highly clustered spectrum---which translates into fast convergence of Krylov solvers after discretization.

The construction of such a pair of preconditioners relies on pseudodifferential expansions of $\mathfrak{A}_\alpha$ to correct the defective principal part. By composing first- and second-order pseudodifferential operators---essentially combinations of tangential derivatives, Hilbert transforms, and multiplication operators---we reconstruct an invertible principal part. Fredholm theory is then invoked to establish the invertibility of the resulting preconditioned operator; by Theorem \ref{theo:unicity} and the invertibility of the preconditioners, this reduces simply to proving injectivity. Although the conceptual design is straightforward and essentially follows an LU-like factorization of the matrix representation of the principal symbol of $\mathfrak{A}_\alpha$, the explicit calculations are rather involved due to the large number of parameters ($\mu_j$, $\lambda_j$, $\omega$, $\rho$ in the original problem and the coupling parameter $\alpha$) in the BIE formulation. We emphasize that such preconditioners are not unique, and the construction presented here is one of several possible choices.

Let us briefly sketch how such a construction can be obtained. We start by noticing that with
\[
\mathfrak{A}_1^{\rm right}:=\begin{bmatrix}
{\cal I}\\
& ({\cal H D}+{\cal J})^{-1}
\end{bmatrix},  \quad  \mathfrak{A}^{\rm left}_{1}:=\mathfrak{A}_1^{\rm right},
\]
we have
\[
\begin{aligned}
&\mathfrak{A}^{\rm left}_{1}\mathfrak{A}_\alpha\mathfrak{A}^{\rm right}_{1}
=\
\begin{bmatrix}
\tfrac12\mathcal H_{(1,-1)}-\tfrac{\alpha}{2}\mathcal H_{(1,1)} &
\tfrac12\mathcal H_{(1,-1)}+\tfrac{\alpha}{2}\mathcal H_{(1,1)}\\[1ex]
\mu_1\mathcal H_{(1,-1)}+\alpha\rho\mu_2\mathcal H_{(1,1)} &
\mu_1\mathcal H_{(1,-1)}-\alpha\rho\mu_2\mathcal H_{(1,1)}
\end{bmatrix}
\\
&
+\begin{bmatrix}
0 & 0\\[1ex]
(\mathcal H\mathcal D+\mathcal J)^{-1}\kappa\,
\bigl(-\mu_1\mathcal H_{(1,-1)}+\alpha\rho\mu_2\mathcal H_{(1,1)}\bigr) &
-(\mathcal H\mathcal D+\mathcal J)^{-1}\kappa\,
\bigl(\mu_1\mathcal H_{(1,-1)}+\alpha\rho\mu_2\mathcal H_{(1,1)}\bigr)
\end{bmatrix}
\\
& +\frac14
\begin{bmatrix}
\begin{bmatrix*}[r]
      0         & -\nu_s^{+}\,\HH \\[0.5ex]
 -\nu_p^{+}\,\HH &      0
\end{bmatrix*}
&
\begin{bmatrix*}[r]
 -\nu_p^{+}\,\I &      0        \\[0.5ex]
      0        &  \nu_s^{+}\,\I
\end{bmatrix*}
\\[4ex]
\begin{bmatrix*}[r]
 2\bigl(\xi_p-(1+\alpha\rho)\omega^2\bigr)\I & 0 \\[0.5ex]
      0                                  & 0
\end{bmatrix*}
&
\begin{bmatrix*}[r]
 -2(1-\alpha\rho)\,\omega^2\,\I & 2(1+\alpha\rho)\,\omega^2\,\HH \\[0.5ex]
  2\,\xi_p\,\HH                 & 2(1-\alpha\rho)\,\omega^2\,\I
\end{bmatrix*}
\end{bmatrix}
(\mathcal H\mathcal D+\mathcal J)^{-2}\\
&
+{\rm OPS}(-3),\end{aligned}
\]
where $\xi_p$ is the first of several coefficients that we will need to introduce, namely
\[
\xi_p := \mu_1 (k_{p}^{(1)})^2 + \alpha\rho\mu_2(k_{p}^{(2)})^2.
\]
Next define
\[
  \mathfrak{A}^{\rm left}_{2} := \begin{bmatrix}
    {\frac{1}2 \mathcal{H}_{(1,1)}-\frac{1}{2\alpha} \mathcal{H}_{(1, -1)} }& 0\\
   -\frac12\,\mathcal H_{(1,1)}
+
\frac{1}{2\alpha}\,\mathcal H_{(1,-1)}& \frac{1}{4\alpha\rho\mu_1\mu_2}
\left(
\mu_1 \mathcal H_{(1,-1)}
+
\alpha\rho\mu_2 \mathcal H_{(1,1)}
\right)
    \end{bmatrix}.
\]
Then, simple calculations, which now take into account the next term of the expansions \eqref{eq:expansions}, show that
\[
\begin{aligned}
\mathfrak{A}^{\rm left}_{2}\mathfrak{A}^{\rm left}_{1}\mathfrak{A}_\alpha\mathfrak{A}^{\rm right}_{1}
 =\ &
\begin{bmatrix} \mathcal I & -\mathcal H^*\\ 0 & 0\end{bmatrix}
+\begin{bmatrix} 0 & 0\\
(\mathcal H\mathcal D+\mathcal J)^{-1}\kappa\,\mathcal H^* &
-(\mathcal H\mathcal D+\mathcal J)^{-1}\kappa
\end{bmatrix} \\
&
+(\mathcal H\mathcal D+\mathcal J)^{-2}\begin{bmatrix} \mathcal R_{11} & \mathcal R_{12}\\ \mathcal R_{21} & \mathcal R_{22}\end{bmatrix}
+{\rm OPS}(-3),
\end{aligned}
\]
where
\[
\begin{aligned}
  \mathcal H^*\ :=\ &\begin{bmatrix} & \HH\\ -\HH & \end{bmatrix},\\
\mathcal R_{11}  \ :=\ &\frac{1}{8\alpha}\begin{bmatrix}
\displaystyle(\alpha+1)\nu_p^{+} \I  & \displaystyle -(\alpha-1)\nu_s^{+} \HH\\[2mm]
\displaystyle(\alpha-1)\nu_p^{+} \HH &\displaystyle (\alpha+1)\nu_s^{+} \I
\end{bmatrix},\qquad &
\mathcal R_{12} \ :=\ &\frac{1}{8\alpha}\begin{bmatrix}
\displaystyle-(\alpha-1)\nu_p^{+} \I & \displaystyle(\alpha+1)\nu_s^{+} \HH\\[2mm]
\displaystyle-(\alpha+1)\nu_p^{+} \HH &\displaystyle -(\alpha-1)\nu_s^{+} \I
\end{bmatrix},\\
 {\cal R}_{21} \ := \ &
\frac{1}{8\alpha}
\begin{bmatrix}
\displaystyle
-\left( \eta \chi_p^- + \zeta_{+,+} \right)\I
&
\displaystyle
(\alpha-1)\nu^+_s\,\HH
\\[2mm]
\displaystyle
-\left( \eta \chi_p^+ - \zeta_{-,+} \right)\HH
&
\displaystyle
-(\alpha+1)\nu^+_s\,\I
\end{bmatrix}, &
{\cal R}_{22}
\  := \ &
\frac{1}{8\alpha}
\begin{bmatrix}
\displaystyle
\left( \eta \chi_p^+ - \zeta_{+,-} \right)\I
&
\displaystyle
(\alpha-1)\nu^-_s\,\HH
\\[2mm]
\displaystyle
\left( \eta\chi_p^- + \zeta_{-,-} \right)\HH
&
\displaystyle
-(\alpha+1)\nu^-_s\,\I
\end{bmatrix},
\end{aligned}
\]
with the following coefficients
\[
\begin{aligned}
  \nu_p^{+}& \ :=(k_{p}^{(1)})^2+\alpha (k_{p}^{(2)})^2,\\
\nu_s^\pm &\ := (k_s^{(1)})^2 \pm \alpha (k_s^{(2)})^2, &
\chi_p^\pm &\ := \pm\mu_1(k_p^{(1)})^2 + \alpha^2\mu_2\rho(k_p^{(2)})^2, \\[2mm]
\eta &\ := \frac{\mu_1-\mu_2\rho}{\mu_1\mu_2\rho}&
\zeta_{\pm,\pm} &\ := \frac{(\mu_1 \pm \alpha\mu_2\rho)(1 \pm \alpha\rho)}{\mu_1\mu_2\rho}\omega^2 .
\end{aligned}
\]
To obtain the term of order $-1$ in the form displayed above we have used that $\HH a-a\HH\in{\rm OPS}(-\infty)$ for any smooth function $a$.

We can continue with
\[
\mathfrak{A}^{\rm left}_{3}:=\begin{bmatrix}
{\cal I}& 0\\
-({\cal H}{\cal D}+{\cal J})\kappa {\cal H}^*     & ({\cal H}{\cal D}+{\cal J})^{2}
\end{bmatrix}
\]
that cancels the leading term in the second row of the matrix operator above and elevates the next term to become the new principal part, resulting in
\[
\begin{aligned}
  \mathfrak{A}^{\rm left}_{3}
\mathfrak{A}^{\rm left}_{2}\mathfrak{A}^{\rm left}_{1}
\mathfrak{A}_{\alpha}\mathfrak{A}^{\rm right}_1
& = \begin{bmatrix}
  {\cal I } & -{\cal H}^*\\
{\cal R}_{21} &{\cal R}_{22}
  \end{bmatrix} +{\rm OPS}(-1).
\end{aligned}
\]
The first row is not affected by $\mathfrak A^{\rm left}_3$; the terms $(\mathcal H\mathcal D+\mathcal J)^{-2}\mathcal R_{11}$, $(\mathcal H\mathcal D+\mathcal J)^{-2}\mathcal R_{12}$ are of order $-2$, and their contribution to the second row through the $(2,1)$ entry of $\mathfrak A^{\rm left}_3$ is of order $-1$ and therefore can be absorbed into the remainder term ${\rm OPS}(-1)$.

This suggests taking
\[
\mathfrak{A}_{4}^{\rm left} :=
\begin{bmatrix}
{\cal I} & 0\\
-8\alpha \rho \mu_1 \mu_2 {\cal R}_{22} {\cal H}^* &
-8\alpha \rho \mu_1 \mu_2 {\cal I}
\end{bmatrix}
\]
which cancels out the bottom-right entry of the matrix operator above:
\[
\begin{aligned}
  \mathfrak{A}^{\rm left}_{4}
  \mathfrak{A}^{\rm left}_{3}
\mathfrak{A}^{\rm left}_{2}\mathfrak{A}^{\rm left}_{1}
\mathfrak{A}_{\alpha}\mathfrak{A}^{\rm right}_1
& =
\begin{bmatrix}
  {\cal I } & -{\cal H}^*\\
{\cal S}_{21} & 0
  \end{bmatrix}+{\rm OPS}(-1),
\end{aligned}
\]
where
\begin{equation}\label{eq:S21}
  {\cal S}_{21}:=
\begin{bmatrix}
(\sigma+\zeta)\I & (\sigma-\zeta)\HH\\
-(\sigma-\zeta)\HH & (\sigma+\zeta) \I
\end{bmatrix}
\end{equation}
with
\[
\begin{aligned}
\sigma \ :=\ &
(\mu_1+\mu_2\rho)\omega^2
-
\mu_1(\mu_1-\mu_2\rho)\left(k_{p}^{(1)}\right)^2,\\
\zeta \ :=\ & \alpha^2\rho
\left[
(\mu_1+\mu_2\rho)\omega^2
+
\mu_2(\mu_1-\mu_2\rho)\left(k_{p}^{(2)}\right)^2
\right].
\end{aligned}
\]
Notice that
\[
{\cal S}_{21}^{-1} = \frac{1}{4\sigma\zeta}
\begin{bmatrix}
(\sigma+\zeta)\I & -(\sigma-\zeta)\HH\\
(\sigma-\zeta)\HH & (\sigma+\zeta) \I
\end{bmatrix}.
\]
Then, with
\[
\mathfrak{A}_{5}^{\rm left} :=
\begin{bmatrix}
{\cal I} & 0\\
0  &
{\cal S}_{21}^{-1}
\end{bmatrix}
\]
we get
\[
\begin{aligned}
  \mathfrak{A}^{\rm left}_{5}
  \mathfrak{A}^{\rm left}_{4}
  \mathfrak{A}^{\rm left}_{3}
\mathfrak{A}^{\rm left}_{2}\mathfrak{A}^{\rm left}_{1}
\mathfrak{A}_{\alpha}\mathfrak{A}^{\rm right}_1
& =
\begin{bmatrix}
  {\cal I } & -{\cal H}^*\\
{\cal I } & 0
  \end{bmatrix}+{\rm OPS}(-1).
\end{aligned}
\]
Now, it suffices to take
\[
  \mathfrak{A}^{\rm left}_{6} := \begin{bmatrix*}[r]
   0      & {\cal I} \\
  {\cal I}& -{\cal I}
  \end{bmatrix*},\quad
  \mathfrak{A}^{\rm right}_{2} := \begin{bmatrix}
  {\cal I}&  \\
         & -{\cal H}^*
  \end{bmatrix}
\]
to obtain, after preconditioning, a compact perturbation of the identity, i.e.,
\[
\mathfrak A^{\rm left}_6\cdots\mathfrak A^{\rm left}_1\,\mathfrak A_\alpha\,\mathfrak A_1^{\rm right}\mathfrak A_2^{\rm right}=\mathfrak I+{\rm OPS}(-1), \quad \text{with}\quad
\mathfrak I :=  \begin{bmatrix*}[r]
  {\cal I}& \\
         &  {\cal I}
  \end{bmatrix*}.
\]

We summarize, slightly rewrite and complete the above results in the following theorem.
\begin{theorem}\label{theo:01}
Define
\[
\begin{aligned}
\mathfrak{P}^{\rm left} &\ : = \mathfrak{A}^{\rm left}_{6} \mathfrak{A}^{\rm left}_{5} \mathfrak{A}^{\rm left}_{4}
  \mathfrak{A}^{\rm left}_{3}
\mathfrak{A}^{\rm left}_{2}\mathfrak{A}^{\rm left}_{1},\\
\mathfrak{P}^{\rm right}&\ :=
 \begin{bmatrix*}
{\cal I} & 0\\
       0  & \begin{bmatrix*}[r]
         0&-(\D +\J)^{-1}\\
         (\D+\J)^{-1}& 0\\
         \end{bmatrix*} 
        \end{bmatrix*}=
        \mathfrak{A}_1^{\rm right} \mathfrak{A}_2^{\rm right} +{\rm OPS}(-\infty).
\end{aligned}
\]
Then  $ \mathfrak{P}^{\rm left}  \mathfrak{A}_\alpha  \mathfrak{P}^{\rm right} : {\cal H}^{s}(\Gamma)\times {\cal H}^{s}(\Gamma)\to {\cal H}^{s}(\Gamma)\times {\cal H}^{s}(\Gamma)$ is continuous and invertible. Moreover,   $\mathfrak{P}^{\rm left}  \mathfrak{A}_\alpha  \mathfrak{P}^{\rm right}$ is a compact perturbation of the identity, i.e.,
\begin{equation}\label{eq:01:theo:01}
  \mathfrak{P}^{\rm left}  \mathfrak{A}_\alpha  \mathfrak{P}^{\rm right} =
\mathfrak{I}+{\rm OPS}(-1).
\end{equation}
\end{theorem}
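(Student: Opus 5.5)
The proof splits into three parts: the compact-perturbation identity \eqref{eq:01:theo:01}, continuity, and invertibility.

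\textbf{Identity \eqref{eq:01:theo:01}.} This is essentially the chain of computations preceding the statement. What remains is to make the algebra rigorous and to control the remainders.

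First, I will check that $\mathfrak P^{\rm right}-\mathfrak A_1^{\rm right}\mathfrak A_2^{\rm right}\in{\rm OPS}(-\infty)$. The key facts are:
\begin{itemize}
\item $\HH$ commutes with $\D$ and with $\J$;
\item $\HH\J=i\J$ and $\HH^2=-\I$;
\item $(\HH\D+\J)^{-1}$ coincides with $-\HH\D_{-1}$ up to a multiple of $\J$.
\end{itemize}
Composing with $-\mathcal H^*$ then gives the displayed antidiagonal block $\pm(\D+\J)^{-1}$, modulo smoothing rank-one terms built from $\J$.

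Second, I will verify each step of the factorization symbol by symbol. At each step, every product of operators is expanded using the expansions \eqref{eq:expansions} and Proposition~\ref{prop:anathema}. Remainders are tracked through the mapping properties:
\begin{itemize}
\item $\mathfrak A^{\rm left}_1$ and $\mathfrak A^{\rm left}_2$ are of order $\le 0$ in each block;
\item $\mathfrak A^{\rm left}_3$ raises the order of the second block row by $2$;
\item $\mathfrak A^{\rm left}_4$, $\mathfrak A^{\rm left}_5$, $\mathfrak A^{\rm left}_6$ are of order $0$.
\end{itemize}
The one real bookkeeping point is that $\mathfrak A^{\rm left}_3$ multiplies the second row by an operator of order $2$. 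So the second row of $\mathfrak A^{\rm left}_2\mathfrak A^{\rm left}_1\mathfrak A_\alpha\mathfrak A_1^{\rm right}$ must be known modulo ${\rm OPS}(-3)$. This is why the expansions of $\V_k$ to ${\rm OPS}(-5)$ and of $\W_k$ to ${\rm OPS}(-3)$ are needed, together with the curvature terms of ${\cal C}^{(j)}_{\rm SL}$ and ${\cal C}^{(j)}_{\rm DL}$. Commutators $[\HH,a]$ are smoothing, and $[\D,a]$ lowers order by one. I expect this symbolic verification, in particular the exact cancellation producing ${\cal S}_{21}$ with the constants $\sigma,\zeta$, to be the main obstacle. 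I would carry it out blockwise on the Fourier side, treating the multiplications by $\kappa$ as commuting modulo lower order.

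\textbf{Continuity.} Continuity on ${\cal H}^s\times{\cal H}^s$ then follows because the composition equals $\mathfrak I$ plus an operator mapping ${\cal H}^s\to{\cal H}^{s+1}$. The latter is compact by Rellich's compact embedding. Hence $\mathfrak P^{\rm left}\mathfrak A_\alpha\mathfrak P^{\rm right}$ is Fredholm of index zero.

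\textbf{Invertibility.} It then suffices to prove injectivity.
\begin{enumerate}
\item $\mathfrak P^{\rm right}$ is injective: its nonidentity block is an isomorphism ${\cal H}^s\to{\cal H}^{s+1}$, since $\D+\J$ is invertible.
\item By Theorem~\ref{theo:unicity}, $\mathfrak A_\alpha$ is injective.
\item $\mathfrak P^{\rm left}$ is injective. Each factor is lower block triangular with invertible diagonal blocks, or explicitly invertible:
\begin{itemize}
\item $\mathfrak A^{\rm left}_1$ uses $({\cal HD}+{\cal J})^{-1}$;
\item the diagonal blocks of $\mathfrak A^{\rm left}_2$ are $\mathcal H_{\bm\beta}$ with $\beta_1^2-\beta_2^2\neq0$, namely $-(1/\alpha)$ and $-1/(\alpha\rho\mu_1\mu_2)$ times constants, invertible by \eqref{eq:Hinvbeta};
\item $\mathfrak A^{\rm left}_3$ has diagonal $({\cal HD}+{\cal J})^2$;
\item $\mathfrak A^{\rm left}_4$ has diagonal $-8\alpha\rho\mu_1\mu_2{\cal I}$;
\item $\mathfrak A^{\rm left}_5$ has diagonal ${\cal S}_{21}^{-1}$, which requires $\sigma\zeta\ne0$;
\item $\mathfrak A^{\rm left}_6$ is trivially invertible.
\end{itemize}
\end{enumerate}
The point to check in step 3 is $\sigma\zeta\neq0$. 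Using \eqref{eq:Hdecomp2b}, write $\mu_1(k_p^{(1)})^2=\mu_1\omega^2/(\lambda_1+2\mu_1)$. Then
\[
\sigma=\omega^2\Bigl[\mu_1+\mu_2\rho-\tfrac{\mu_1(\mu_1-\mu_2\rho)}{\lambda_1+2\mu_1}\Bigr]
=\omega^2\,\frac{\mu_1(\lambda_1+\mu_1)+\mu_2\rho(\lambda_1+3\mu_1)}{\lambda_1+2\mu_1}>0 .
\]
The same computation applies to $\zeta$, giving the factor $\alpha^2\rho\,\omega^2(\mu_1(\lambda_2+3\mu_2)+\rho\mu_2(\lambda_2+\mu_2))/(\lambda_2+2\mu_2)>0$. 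Both quantities are positive under $\lambda_j+\mu_j>0$, so each factor is invertible in the relevant spaces and the composition is injective, hence invertible.
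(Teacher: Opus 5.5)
Your proposal is correct and takes essentially the same route as the paper. The identity comes from the preceding factorization chain together with $\mathfrak P^{\rm right}-\mathfrak A_1^{\rm right}\mathfrak A_2^{\rm right}\in{\rm OPS}(-\infty)$, and invertibility from the Fredholm alternative, Theorem~\ref{theo:unicity} and injectivity of the factors; the only nontrivial checks there are the diagonal blocks of $\mathfrak A_2^{\rm left}$ and $\sigma,\zeta>0$, and your closed forms for $\sigma,\zeta$ agree with the paper's. One caution for your symbolic verification: do not commute $\kappa$ past $(\mathcal H\mathcal D+\mathcal J)^{\pm1}$ in the second block row, since the resulting ${\rm OPS}(-2)$ commutator terms are lifted to order $0$ by $(\mathcal H\mathcal D+\mathcal J)^2$ in $\mathfrak A_3^{\rm left}$, and the exact cancellation relies on $\kappa$ sitting to the right of $(\mathcal H\mathcal D+\mathcal J)$, as in the definition of $\mathfrak A_3^{\rm left}$.
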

\begin{proof}
  We have already shown that $\mathfrak P^{\rm left}\mathfrak A_\alpha\mathfrak A_1^{\rm right}\mathfrak A_2^{\rm right}$ is a compact perturbation of the identity, and $\mathfrak P^{\rm right}$ differs from $\mathfrak A_1^{\rm right}\mathfrak A_2^{\rm right}$ by a smoothing operator. Hence, \eqref{eq:01:theo:01} is proven.
  
  Furthermore, $\mathfrak{P}^{\rm right}$  is invertible, in particular injective.  On the other hand,   $\mathfrak{P}^{\rm left}$ is injective if so are
 $\mathfrak{A}^{\rm left}_{2}:{\cal H}^s(\Gamma)\times {\cal H}^s(\Gamma)\to {\cal H}^s(\Gamma)\times {\cal H}^s(\Gamma)$ and ${\cal S}_{21}:{\cal H}^s(\Gamma)\to {\cal H}^s(\Gamma)$ in \eqref{eq:S21}.  For the former, the diagonal blocks are
\begin{align*}
    {\frac{1}2 \mathcal{H}_{(1,1)}-\frac{1}{2\alpha} \mathcal{H}_{(1, -1)} } &=
    \frac1{2\alpha}\mathcal{H}_{(\alpha-1, \alpha+1)},\\
\frac{1}{4\alpha\rho\mu_1\mu_2}
\left(
\mu_1 \mathcal H_{(1,-1)}
+
\alpha\rho\mu_2 \mathcal H_{(1,1)}\right) &=
\frac{1}{4\alpha\rho\mu_1\mu_2}\mathcal H_{(\alpha\rho\mu_2+\mu_1,\alpha\rho\mu_2-\mu_1)},
\end{align*}
which are invertible by \eqref{eq:Hinvbeta}, since 
\[
(\alpha-1)^2-(\alpha+1)^2=-4\alpha\ne0, \quad (\alpha\rho\mu_2+\mu_1)^2-(\alpha\rho\mu_2-\mu_1)^2=4\alpha\rho\mu_1\mu_2\ne0.
\]

On the other hand, the eigenvalues of ${\cal S}_{21}$ are $2\sigma$ and $2\zeta$, so that ${\cal S}_{21}$ is injective if
 \[
\sigma,\zeta\ne 0.
\]
But, using \eqref{eq:ks:kp}, we have
\[
\sigma = \frac{\omega^2}{\lambda_1+2\mu_1}\Big[(\lambda_1+\mu_1)(\mu_1+\mu_2\rho)+2\mu_1\mu_2\rho\Big],\qquad
\zeta  = \frac{\alpha^2\rho\,\omega^2}{\lambda_2+2\mu_2}\Big[(\lambda_2+\mu_2)(\mu_1+\mu_2\rho)+2\mu_1\mu_2\Big],
\]
which are strictly positive since $\mu_j,\rho>0$, $\lambda_j+\mu_j>0$ and $\alpha\ne0$. Hence $\mathcal S_{21}$ is invertible.  

Since $\mathfrak P^{\rm left}$ and $\mathfrak P^{\rm right}$ are injective, Theorem \ref{theo:unicity} implies that $\mathfrak P^{\rm left}\mathfrak A_\alpha\mathfrak P^{\rm right}$ is injective, and by the Fredholm alternative it is invertible, which finishes the proof.
\end{proof}

This theorem suggests a well-posed formulation for our problem. Namely, from \eqref{eq:cfie_t}, set first
\begin{equation}\label{eq:step:A1}
\begin{bmatrix}
                                        \check{\bm{f}} \\
                                        \check{\bm{\lambda}}\\
                                       \end{bmatrix}:=
 \mathfrak{P}^{\rm left}\begin{bmatrix}
                                        \bm{f} \\
                                        \bm{\lambda} \\
                                       \end{bmatrix},
 \end{equation}
and then solve
\begin{equation}\label{eq:step:A2}
 (\mathfrak{P}^{\rm left}
 \mathfrak{A}_\alpha
 \mathfrak{P}^{\rm right})
 \begin{bmatrix}
 \check{\bm{\varphi}}\\
\check{ \bm{g}}
 \end{bmatrix}
 =
\begin{bmatrix}
                                        \check{\bm{f}} \\
                                        \check{\bm{\lambda}}\\
                                       \end{bmatrix}
%
\end{equation}
Finally, we define
\begin{equation}\label{eq:step:A3}
\begin{bmatrix}
                                        \bm{\varphi}\\
                                        \bm{g}\\
                                       \end{bmatrix}
 = \mathfrak{P}^{\rm right} \begin{bmatrix}
                                        \check{\bm{\varphi}}\\
                                       \check{ \bm{g}}\\
                                       \end{bmatrix}
 \end{equation}
and use these densities $\bm{\varphi}=(\varphi^{(p)},\varphi^{(s)})$ and ${\bm g}=(g^{(p)},g^{(s)})$ in the representation formula \eqref{eq:representation}.

Moreover, for any $s$ there exists $C>0$ such that
\[
\|\check{\bm{\varphi}}\|_{{\cal H}^s(\Gamma)}
+\|\check{\bm{g}}\|_{{\cal H}^s(\Gamma)}
\le
C\Big(
\|\check{\bm{f}}\|_{{\cal H}^s(\Gamma)}
+\|\check{\bm{\lambda}}\|_{{\cal H}^s(\Gamma)}
\Big).
\]

Returning to the original equation \eqref{eq:cfie_t}, existence of a solution follows directly from  Theorem \ref{theo:01} (uniqueness was proven in Theorem \ref{theo:unicity}) together with a  stability estimate stated in the following corollary.

\begin{corollary}\label{cor:01}
  The equation in the original (unpreconditioned) $(\nnn,\ttt)$-CFIE$_\alpha$ formulation 
 \begin{equation}\label{eq:cfie_t:2}
\mathfrak{A}_{\alpha}
\begin{bmatrix}
{\bm \varphi}\\
{\bm g}
\end{bmatrix}
=
\begin{bmatrix}
{\bm f}\\
{\bm \lambda}
\end{bmatrix}
\end{equation}
is uniquely solvable. Furthermore, if $(\bm{f},\bm{\lambda})\in {\cal H}^{s+1}(\Gamma)\times {\cal H}^{s}(\Gamma)$, then the solution $(\bm{\varphi},\bm{g})$ belongs to ${\cal H}^{s-1}(\Gamma)\times {\cal H}^{s}(\Gamma)$ and satisfies the estimate, with $C$ independent of the data. 
\[
\|\bm{\varphi}\|_{{\cal H}^{s-1}(\Gamma)}+
\|\bm{g}\|_{{\cal H}^{s}(\Gamma)}
\le
C\Big(
\|\bm{f}\|_{{\cal H}^{s+1}(\Gamma)}+\|\bm{\lambda}\|_{{\cal H}^{s}(\Gamma)}
\Big).
\]
\end{corollary}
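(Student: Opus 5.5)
Uniqueness is already settled by Theorem~\ref{theo:unicity}, so what remains is existence, the regularity statement and the stability bound. The plan is to derive all three from Theorem~\ref{theo:01} through the factorization \eqref{eq:step:A1}--\eqref{eq:step:A3}. Given $(\bm f,\bm\lambda)\in{\cal H}^{s+1}(\Gamma)\times{\cal H}^{s}(\Gamma)$, put $(\check{\bm f},\check{\bm\lambda})=\mathfrak P^{\rm left}(\bm f,\bm\lambda)$. Let $(\check{\bm\varphi},\check{\bm g})$ be the unique solution of \eqref{eq:step:A2}, and define $(\bm\varphi,\bm g)=\mathfrak P^{\rm right}(\check{\bm\varphi},\check{\bm g})$. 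Then
\[
\mathfrak P^{\rm left}\mathfrak A_\alpha(\bm\varphi,\bm g)=\mathfrak P^{\rm left}(\bm f,\bm\lambda).
\]
In the proof of Theorem~\ref{theo:01} we showed that $\mathfrak P^{\rm left}$ is injective. It follows that $\mathfrak A_\alpha(\bm\varphi,\bm g)=(\bm f,\bm\lambda)$, which gives existence. Uniqueness comes from Theorem~\ref{theo:unicity}.

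For the regularity indices we track the orders of the factors of $\mathfrak P^{\rm left}$ block by block. $\mathfrak A_1^{\rm left}$ leaves the first row unchanged and applies $({\cal HD}+{\cal J})^{-1}$, of order $-1$, to the second row. $\mathfrak A_2^{\rm left}$ has order $0$. $\mathfrak A_3^{\rm left}$ applies $({\cal HD}+{\cal J})^2$, of order $2$, to the second row and $({\cal HD}+{\cal J})\kappa{\cal H}^*$, of order $1$, to the first row. $\mathfrak A_4^{\rm left}$, $\mathfrak A_5^{\rm left}$ and $\mathfrak A_6^{\rm left}$ have order $0$ in every block.

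Track these factors one at a time. Because $\mathfrak A^{\rm left}_2$ mixes the rows, the data $\bm f\in{\cal H}^{s+1}$ and $\bm\lambda\in{\cal H}^{s}$ become, after $\mathfrak A^{\rm left}_2\mathfrak A^{\rm left}_1$, a pair in ${\cal H}^{s+1}\times{\cal H}^{s+1}$. Applying $\mathfrak A^{\rm left}_3$ then lowers the second row to ${\cal H}^{s-1}$, and the first row also contributes to it only at ${\cal H}^{s}$. After the order-$0$ factors $\mathfrak A_4^{\rm left},\mathfrak A_5^{\rm left},\mathfrak A_6^{\rm left}$ we obtain
\[
\|\check{\bm f}\|_{{\cal H}^{s-1}}+\|\check{\bm\lambda}\|_{{\cal H}^{s-1}}\le C\big(\|\bm f\|_{{\cal H}^{s+1}}+\|\bm\lambda\|_{{\cal H}^{s}}\big).
\]
Applying Theorem~\ref{theo:01} with index $s-1$ gives $(\check{\bm\varphi},\check{\bm g})\in{\cal H}^{s-1}\times{\cal H}^{s-1}$ with the corresponding bound. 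Finally, $\mathfrak P^{\rm right}$ is the identity on the first component and has order $-1$ on the second, through $(\D+\J)^{-1}$. Hence $\bm\varphi\in{\cal H}^{s-1}$ and $\bm g\in{\cal H}^{s}$, and chaining the three bounds gives the stated estimate.

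The step most likely to cause trouble is this bookkeeping of orders through $\mathfrak P^{\rm left}$. The issue is that $\mathfrak A^{\rm left}_3$ combines an order-$2$ factor on the second row with the mixing produced by $\mathfrak A^{\rm left}_2$. One must check carefully that $\check{\bm f}$ and $\check{\bm\lambda}$ lose no more than two derivatives relative to ${\cal H}^{s+1}$, and one derivative relative to $\bm\lambda\in{\cal H}^s$. This calls for a careful block-by-block count; bounding each factor crudely in its worst block would lose extra regularity and give a weaker estimate than the one stated.
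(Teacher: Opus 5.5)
Your proposal is correct and follows the paper's proof. Existence comes from solving the preconditioned system and mapping back with $\mathfrak P^{\rm right}$; you make explicit the injectivity of $\mathfrak P^{\rm left}$, which the paper uses tacitly. Uniqueness comes from Theorem~\ref{theo:unicity}, and the estimate comes from chaining the continuity of $\mathfrak P^{\rm left}:\mathcal H^{s+1}(\Gamma)\times\mathcal H^{s}(\Gamma)\to\mathcal H^{s-1}(\Gamma)\times\mathcal H^{s-1}(\Gamma)$, of the inverse of the preconditioned operator at index $s-1$, and of $\mathfrak P^{\rm right}$. Your block-by-block order count through $\mathfrak A^{\rm left}_1,\dots,\mathfrak A^{\rm left}_6$ is accurate and simply verifies the mapping properties that the paper asserts ``by construction''.
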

\begin{proof} Existence follows from Theorem \ref{theo:01} and \eqref{eq:step:A3}; uniqueness was proven in Theorem~\ref{theo:unicity}.  

By construction,
\begin{align}
  \mathfrak{P}^{\mathrm{left}} &: \
  \mathcal{H}^{s+1}(\Gamma)\times \mathcal{H}^{s}(\Gamma)\to \mathcal{H}^{s-1}(\Gamma)\times \mathcal{H}^{s-1}(\Gamma), 
  \label{eq:prop:P:01}\\
     \left(\mathfrak{P}^{\mathrm{left}} \mathfrak{A}_\alpha   \mathfrak{P}^{\mathrm{right}}\right)^{-1}&: 
   \mathcal{H}^{s-1}(\Gamma)\times \mathcal{H}^{s-1}(\Gamma)\to \mathcal{H}^{s-1}(\Gamma)\times \mathcal{H}^{s-1}(\Gamma),\label{eq:prop:P:02}\\
  \mathfrak{P}^{\mathrm{right}}&:\mathcal{H}^{s-1}(\Gamma)\times \mathcal{H}^{s-1}(\Gamma)\to \mathcal{H}^{s-1}(\Gamma)\times \mathcal{H}^{s}(\Gamma)\label{eq:prop:P:03}
\end{align}
 are continuous for all $s$ with continuity constants independent of the data and denoted by $C_{\mathfrak{P}^{\rm left}}$, $C$ and $C_{\mathfrak{P}^{\rm right}}$ respectively. We stress that $\mathfrak P^{\rm left}$ acts on the data in the order $(\bm f,\bm\lambda)$ of \eqref{eq:step:A1} and $\mathfrak P^{\rm right}$ on the unknowns in the order $(\check{\bm\varphi},\check{\bm g})$ of \eqref{eq:step:A3}; in the estimates below the order of the norms is immaterial, but the Sobolev index attached to each function is not.

Now  \begin{align*}
\|{\bm{\varphi}}\|_{\mathcal{H}^{s-1}(\Gamma)}+
  \|{\bm{g}}\|_{\mathcal{H}^{s}(\Gamma)}
 &\ \le C_{ \mathfrak{P}^{\rm right}}\left[ 
\|\check{\bm{\varphi}}\|_{\mathcal{H}^{s-1}(\Gamma)}+\| \check{\bm{g}}\|_{\mathcal{H}^{s-1}(\Gamma)}
 \right]\tag*{(by  \eqref{eq:step:A3} and  \eqref{eq:prop:P:03})}\\
&\ \le  C_{ \mathfrak{P}^{\rm right}} \,C\, \left[
\|\check{\bm{f}}\|_{\mathcal{H}^{s-1}(\Gamma)}+\|\check{\bm{\lambda}}\|_{\mathcal{H}^{s-1}(\Gamma)}
\right]\tag*{(by  \eqref{eq:step:A2} and  \eqref{eq:prop:P:02})}\\ 
&\ \le
C_{ \mathfrak{P}^{\rm right}} \,C  \, C_{ \mathfrak{P}^{\rm left}}\left[
\|\bm{f}\|_{\mathcal{H}^{s+1}(\Gamma)}+\| {\bm{\lambda}}\|_{\mathcal{H}^{s}(\Gamma)} \right]. \tag*{(by   
\eqref{eq:step:A1} and \eqref{eq:prop:P:01})}
\end{align*}
which proves the result.
\end{proof}

Ultimately, the practical efficacy of these preconditioners is best demonstrated in the solution of the linear systems arising from the discretization of \eqref{eq:step:A2}. In the next section, we will present a high-order Nystr\"om discretization for the boundary operators that accommodates the proposed preconditioners. We show that the resulting linear systems can be solved efficiently via Krylov subspace methods---specifically, GMRES---in a small number of iterations, essentially independent of the mesh size.

\section{Nystr\"om discretizations and numerical results}\label{NR}
In this section we perform several numerical experiments to illustrate the performance of the proposed formulation and preconditioners. To this end, we have implemented and adapted the Nystr\"om discretization based on global trigonometric interpolation and Kussmaul--Martensen product quadrature.
Let $Q_N$ denote the trigonometric interpolation operator at the $N$ equispaced nodes $t_j=2\pi j/N$. For a kernel of the form $A(s,t)\chi(s-t)$, with $A$ smooth and $\chi$ a known $2\pi$-periodic singular function, the Nystr\"om rule reads
\[
 \int_0^{2\pi} \chi(s-t)\,A(s,t)\,\varphi(t)\,{\rm d}t\approx
 \int_0^{2\pi} \chi(s-t)\, Q_N[A(s,\cdot)\varphi](t)\,{\rm d}t,
\]
where the integral on the right-hand side is computed exactly for the singular functions $\chi$ arising from the splitting of the kernels of the four classical Helmholtz BIOs into singular and regular parts; the regular parts correspond to $\chi\equiv1$, for which the rule reduces to the trapezoidal rule. The rule is spectrally accurate for smooth $\Gamma$ and smooth densities.

This method was originally presented and analyzed by R.~Kress and collaborators \cite{Kress, KressH} for Helmholtz boundary integral equations. It has also been successfully applied, using graded meshes, to domains with corners \cite{KressCorner,dominguez2016well}, to the Navier equation \cite{chapko2000numerical, DomCat:2022}, to preconditioned Helmholtz BIOs \cite{turc2016well, Dominguez2016, boubendir2015regularized}, and within the Helmholtz decomposition framework for the Navier equation considered here (cf.\ \cite{DomCat:2022, lai2019framework}), including a rigorous stability and error analysis for the Dirichlet problem in \cite{DomCat:IMAJNumer:2025}. A major advantage of this discretization is that Fourier multiplier (i.e., periodic convolution) operators can be evaluated efficiently via standard FFT routines. This applies directly to $\mathcal{D}$, $\mathcal{H}$ and $\mathcal J$, which are the building blocks of the preconditioners of Section~\ref{sec:preconditioning}. On the other hand, the framework relies on explicit, stable decompositions of the integral kernels, which shifts the main analytical and implementation effort to the kernel splitting. This further justifies decoupling via the Helmholtz decomposition: the scalar Helmholtz kernels are considerably simpler to split than those of the full Navier system.

\subsection{Discretization and implementation}\label{sec:implementation}

Throughout this section $\Gamma$ is given by a smooth $2\pi$-periodic parameterization $\bm x:[0,2\pi)\to\Gamma$, not necessarily by arc-length, and we write $\eta(t):=|\bm x'(t)|$. Each density is approximated by its values at the $N$ equispaced nodes $t_j=2\pi j/N$, so that the discrete counterpart of a $2\times 2$ block operator such as $\mathcal C_{\rm SL}$ is a $2N\times 2N$ matrix and that of $\mathfrak A_\alpha$ is a dense complex $4N\times 4N$ matrix. This convention ($N$ points per density) is used in all tables and figures below. The Helmholtz BIOs $\V_k$, $\K_k$, $\K_k^\top$ and $\W_k$ are discretized with the Kress--Nystr\"om method described above. The remaining ingredients are treated as follows. The tangential derivative $\partial_{\bm t}=\eta^{-1}\,{\rm d}/{\rm d}t$ is implemented as the Fourier multiplier ${\rm d}/{\rm d}t$ (exact on trigonometric polynomials) combined with the pointwise multiplication by $\eta^{-1}$, and $\partial_{\bm t}^2$ as $\partial_{\bm t}\partial_{\bm t}$. The Hilbert transform $\HH$ and the mean $\J$ are the Fourier multipliers ${\rm i}\,{\rm sign}(m)$ and $\delta_{m,0}$ in the parameter $t$, which differ from their arc-length counterparts by smoothing operators only. The curvature $\kappa$ is a pointwise multiplication, i.e., a diagonal matrix, and $(\mathcal H\mathcal D+\mathcal J)^{-1}$ is computed as the inverse of the corresponding matrix. All the factors $\mathfrak A_j^{\rm left}$, $\mathfrak A_j^{\rm right}$ of Section~\ref{sec:preconditioning} are assembled as matrices and multiplied, so that $\mathfrak P^{\rm left}\mathfrak A_\alpha\mathfrak P^{\rm right}$ is available explicitly; we recall that in $\mathfrak A_3^{\rm left}$ the curvature must stay to the right of $(\mathcal H\mathcal D+\mathcal J)$, since the two do not commute. The explicit assembly is adequate for the sizes considered here ($4N\le 4096$); for larger problems the preconditioners should be applied matrix-free via FFT, which is straightforward since all their blocks are Fourier multipliers, diagonal matrices or constant combinations of $\I$ and $\HH$. No attempt has been made to optimize the implementation: our aim is to validate the formulation and the preconditioners, not to assess computational cost.

The linear systems, with and without preconditioning, are solved both by Gaussian elimination and by GMRES. GMRES is run without restart on both systems with the same stopping criterion, a relative residual below $10^{-10}$, which guarantees that the iterative error is below the discretization error in all the experiments reported: the  errors of the direct and iterative solutions agree to all displayed digits, and so do those of the preconditioned and unpreconditioned solutions. For this reason only the errors of the solutions obtained by Gaussian elimination are reported. Condition numbers are computed in the $2$-norm from the singular value decomposition of the matrices. Errors are measured at observation points away from the scatterer: we evaluate $\nabla u_p$ and $\vcurl u_s$ at equispaced points on the circle of radius $4$ and report the maximum of the Euclidean norm of the pointwise error,  $\|\nabla e_p\|$ and $\|\vcurl e_s\|$, with $e_\delta=u_\delta-u_\delta^{N}$, $\delta\in\{p,s\}$.

\paragraph{What to expect from the discrete preconditioned systems.}
Theorem~\ref{theo:01} states that $\mathfrak P^{\rm left}\mathfrak A_\alpha\mathfrak P^{\rm right}$ is the identity plus a compact operator, so that its spectrum accumulates only at $1$. For the circle this property is inherited exactly by the discrete matrices: all the operators involved are diagonal in the Fourier basis, the curvature is constant and the arc-length parameterization is the natural one, so that the cancellations of Section~\ref{sec:preconditioning} take place mode by mode also at the discrete level. For non-circular boundaries the situation is different. The multiplication operators by $\kappa$ and, for non-arc-length parameterizations, by $\eta$ couple the Fourier modes, and the highest modes representable on the grid---those near the Nyquist frequency, where the Nystr\"om discretization of $\V_k$, $\K_k$, $\W_k$ is no longer accurate---are contaminated by aliasing. The left preconditioner contains the second-order factor $(\mathcal H\mathcal D+\mathcal J)^2$, which amplifies this contamination by $N^2$. We therefore expect, for the ellipse and the cavity, that the bulk of the spectrum of the preconditioned matrices clusters at $1$, as the theory predicts, together with a small number of outlying singular values produced by the highest discrete modes and growing with $N$. Since GMRES convergence is governed by the clustering and not by the extreme singular values, this should be visible in the condition numbers but not in the iteration counts. The circle thus plays the role of a control experiment, and the comparison between the ellipse and the cavity measures the influence of the geometry.

\subsection{Geometries}\label{sec:geometries}

Three smooth scatterers are considered, all of perimeter $2\pi$ (Figure~\ref{fig:scatterers}): the unit circle, an ellipse with semi-axes in a 2:1 ratio, and a non-convex cavity-shaped domain which, despite being smooth, exhibits a more complex geometry. The circle and the ellipse are parameterized by arc-length. The cavity deserves a comment. It is a $C^\infty$ curve, so the analysis of Sections~\ref{GradHessCalculations}--\ref{sec:preconditioning} applies to it without restriction and the Nystr\"om method converges spectrally. Nevertheless, it combines several features that make it considerably harder than the ellipse for the same number of degrees of freedom: the curvature varies by two orders of magnitude and changes sign along the neck, so that the terms in $\kappa$ and $\kappa'$ that the preconditioner has to cancel are large; the neck brings two portions of $\Gamma$ close to each other, which deteriorates the accuracy of the Nystr\"om quadrature until the mesh resolves the gap; and, being non-convex, it may support nearly trapped modes for which the solution of the transmission problem is large at some frequencies. None of these features is a spurious-resonance phenomenon of the formulation---Theorem~\ref{theo:unicity} excludes those for all $\alpha\neq0$---but all of them are reflected in larger constants, larger condition numbers and more GMRES iterations than for the ellipse, both before and after preconditioning. For this geometry we do not use the arc-length parameterization but the natural one,
\[
\bm x(t)=\frac{2\pi}{L_0}\Big(\frac45\cos 2t+\frac25\cos t,\;
\tfrac{17}{48}\sin 2t+\frac38\sin 3t-\frac1{24}\sin 4t+\frac7{12}\sin t\Big),
\qquad t\in[0,2\pi),
\]
where $L_0\approx 9.2411$ is the length of the curve in brackets. For this parameterization $\eta(t)$ ranges between $0.23$ and $1.53$ and attains its minimum precisely at the two points of maximal curvature ($\kappa\approx 57$). The equispaced grid in $t$ is thus implicitly graded towards the regions of large curvature, with roughly four times the point density of the arc-length grid there, which substantially improves the resolution of the Nystr\"om method for a given $N$. The price is that $\partial_{\bm t}$, $\partial_{\bm t}^2$ and $H$ must be implemented in the parameter $t$ with the geometric factors described in Section~\ref{sec:implementation}.
\begin{figure}[htbp]

\caption{\label{fig:scatterers} Smooth scatterers used in the numerical experiments, drawn to the same scale: unit circle (left), ellipse (centre) and cavity (right). All three have perimeter $2\pi$.}
\centering
\pgfplotsset{scatterer/.style={
    width=0.33\textwidth, height=0.33\textwidth,
    xmin=-1.5, xmax=1.5, ymin=-1.5, ymax=1.5,
    grid=both, axis equal image,
    xtick={-1,0,1}, ytick={-1,0,1},
    tick label style={font=\small}}}
\begin{tikzpicture}
  \begin{axis}[scatterer]
    \addplot[domain=0:360,samples=100,variable=\t,very thick, red!70!black]
      ({cos(t)},{sin(t)});
  \end{axis}
\end{tikzpicture}\hfill
\begin{tikzpicture}
  \begin{axis}[scatterer]
    \addplot[domain=0:360,samples=100,variable=\t,very thick, red!70!black]
      ({0.6485*cos(t)},{1.2970*sin(t)});
  \end{axis}
\end{tikzpicture}\hfill
\begin{tikzpicture}
  \begin{axis}[scatterer]
    \addplot[domain=0:360,samples=200,variable=\t,very thick, red!70!black]
      ({0.6799*((4*cos(2*t))/5+(2*cos(t))/5)},
       {0.6799*((17*sin(2*t))/48+(3*sin(3*t))/8-sin(4*t)/24+(7*sin(t))/12)});
  \end{axis}
\end{tikzpicture}
\end{figure}


\subsection{Validation: exterior Neumann problem}\label{sec:neumann}

We begin with an illustration of the accuracy that the Nystr\"om discretization achieves when the Helmholtz decomposition approach~\eqref{eq:NH_local}, via the BIOs $\mathcal{C}_{\rm SL}$ and $\mathcal{C}_{\rm DL}$, is used for the Navier equation with Neumann boundary conditions. That is, we consider
\[
 u_p = \mathrm{SL}_{p_1}\varphi^{(p)},\ \
 u_s = \mathrm{SL}_{s_1}\varphi^{(s)}, \quad {\cal C}_{\rm SL}^{(1)}\begin{bmatrix}
                                                                  \varphi^{(p)}\\
                                                                 \varphi^{(s)}
                                                                 \end{bmatrix}
=  -\begin{bmatrix*}
  {\rm T}_1 {\bf u}^{\rm inc}\cdot\nnn\\
  {\rm T}_1 {\bf u}^{\rm inc}\cdot\ttt
  \end{bmatrix*},
\]
in the first case, and
\[
 u_p = \mathrm{DL}_{p_1}g^{(p)},\  \
 u_s = \mathrm{DL}_{s_1}g^{(s)}, \quad {\cal C}_{\rm DL}^{(1)}\begin{bmatrix}
                                                                  g^{(p)}\\
                                                                  g^{(s)}
                                                                 \end{bmatrix}
= -\begin{bmatrix*}
  {\rm T}_1 {\bf u}^{\rm inc}\cdot\nnn\\
  {\rm T}_1 {\bf u}^{\rm inc}\cdot\ttt
  \end{bmatrix*},
\]
in the second case. We recall that these formulations are defective, which is reflected in ill-conditioned matrices. The accuracy is assessed by the method of manufactured solutions: the Neumann data are produced by a point source $\x_0\in\Omega_2$ placed inside the scatterer,
 \[
    {\bf u}^{\rm inc}(\x):={\bm{\Phi}}(\x-\x_0) \bm{\beta},\quad \x\in\Gamma,
 \]
where
\[
\boldsymbol{\Phi}(\bm{r})
:=
\frac{{\rm i}}{4\mu}
H_0^{(1)}(k_s |\bm{r}|)\,I_2
+
\frac{{\rm i}}{4\omega^2}
\nabla \nabla^{\top}
\left[
H_0^{(1)}(k_s |\bm{r}|)
-
H_0^{(1)}(k_p |\bm{r}|)
\right]
\]
is the fundamental solution of the Navier equation and $\bm{\beta}=(1,-1)^\top$, so that the exterior field is known in closed form. We take $\lambda =2$, $\mu=3$, three frequencies, and measure the errors at $250$ observation points as described above. The results for the ellipse and the cavity are collected in Table~\ref{testCSL_CDL}. The errors decay rapidly as $N$ increases, consistently with the spectral convergence of the Nystr\"om method for smooth boundaries, until they stagnate at the level of $10^{-11}$--$10^{-13}$ (ellipse) and $10^{-13}$--$10^{-15}$ (cavity, thanks to the implicitly graded grid). The condition numbers of the matrices are in the range $10^{4}$--$10^{5}$: the defectiveness of the formulation does not prevent an accurate solution by Gaussian elimination, but it explains the stagnation of the error decay for the largest values of $N$.

\begin{table}[htpb]

\caption{Errors for the p-wave and s-wave exterior components at equispaced points using the Helmholtz decomposition BIE approach with single $\mathcal{C}_{\rm SL}$ and double layer $\mathcal{C}_{\rm DL}$ BIOs for an ellipse (top) and a cavity domain, with $\lambda=2$ and $\mu=3$, at various levels of discretization. $N$ is the number of quadrature points per density, so that the system size is $2N \times 2N$. \label{testCSL_CDL}}
\centering

\begin{tabular}{cc cc cc}
\toprule
\multicolumn{6}{c}{Ellipse} \\
\cmidrule(lr){1-6}
\multirow{2}{*}{$\omega$} & \multirow{2}{*}{$N$}
& \multicolumn{2}{c}{CSL (single-layer)}
& \multicolumn{2}{c}{CDL (double-layer)} \\
\cmidrule(lr){3-4} \cmidrule(lr){5-6}
& & $\|\nabla e_p\|$ & $\|\vcurl e_s\|$
  & $\|\nabla e_p\|$ & $\|\vcurl e_s\|$ \\
\midrule
\multirow{4}{*}{$8$}
 & 16  & $3.33 \times 10^{-6}$  & $1.81 \times 10^{-5}$  & $1.02 \times 10^{-5}$  & $5.33 \times 10^{-5}$ \\
 & 32  & $2.38 \times 10^{-10}$ & $1.64 \times 10^{-9}$  & $7.94 \times 10^{-10}$ & $6.31 \times 10^{-9}$ \\
 & 64  & $2.69 \times 10^{-13}$ & $1.75 \times 10^{-12}$ & $1.08 \times 10^{-12}$ & $7.44 \times 10^{-12}$ \\
 & 128 & $2.22 \times 10^{-13}$ & $1.86 \times 10^{-12}$ & $6.84 \times 10^{-13}$ & $6.10 \times 10^{-12}$ \\
\midrule
\multirow{4}{*}{$32$}
 & 16  & $2.03 \times 10^{-3}$  & $2.53 \times 10^{-2}$  & $1.70 \times 10^{-3}$  & $7.21 \times 10^{-3}$ \\
 & 32  & $1.74 \times 10^{-7}$  & $4.29 \times 10^{-6}$  & $1.18 \times 10^{-7}$  & $2.66 \times 10^{-6}$ \\
 & 64  & $6.88 \times 10^{-12}$ & $5.95 \times 10^{-11}$ & $6.59 \times 10^{-12}$ & $5.00 \times 10^{-11}$ \\
 & 128 & $4.87 \times 10^{-12}$ & $4.47 \times 10^{-11}$ & $4.51 \times 10^{-12}$ & $5.30 \times 10^{-11}$ \\
\midrule
\multirow{4}{*}{$128$}
 & 32  & $5.23 \times 10^{-3}$  & $2.77 \times 10^{-2}$  & $1.15 \times 10^{-3}$  & $7.90 \times 10^{-3}$ \\
 & 64  & $2.15 \times 10^{-11}$ & $1.78 \times 10^{-8}$  & $3.62 \times 10^{-11}$ & $1.15 \times 10^{-7}$ \\
 & 128 & $1.97 \times 10^{-11}$ & $6.14 \times 10^{-11}$ & $2.25 \times 10^{-11}$ & $8.48 \times 10^{-11}$ \\
 & 256 & $3.30 \times 10^{-11}$ & $8.06 \times 10^{-11}$ & $1.78 \times 10^{-11}$ & $1.06 \times 10^{-10}$ \\
\bottomrule
\end{tabular}

\vspace{1em}

\begin{tabular}{cc cc cc}
\toprule
\multicolumn{6}{c}{Cavity domain} \\
\cmidrule(lr){1-6}
\multirow{2}{*}{$\omega$} & \multirow{2}{*}{$N$}
& \multicolumn{2}{c}{CSL (single-layer)}
& \multicolumn{2}{c}{CDL (double-layer)} \\
\cmidrule(lr){3-4} \cmidrule(lr){5-6}
& & $\|\nabla e_p\|$ & $\|\vcurl e_s\|$
  & $\|\nabla e_p\|$ & $\|\vcurl e_s\|$ \\
\midrule
\multirow{4}{*}{$8$}
 & 128 & $6.35 \times 10^{-7}$  & $1.33 \times 10^{-7}$  & $4.57 \times 10^{-6}$  & $9.08\times 10^{-6}$ \\
 & 256  & $4.01 \times 10^{-11}$ & $1.15 \times 10^{-11}$ & $3.76 \times 10^{-11}$ & $1.09\times 10^{-10}$ \\
 & 512  & $2.50 \times 10^{-15}$   & $4.81 \times 10^{-15}$ & $2.41 \times 10^{-14}$ & $7.62\times 10^{-14}$ \\
 & 1024 & $5.11 \times 10^{-15}$   & $1.28 \times 10^{-15}$ & $7.70 \times 10^{-14}$ & $2.72\times 10^{-13}$ \\
\midrule
\multirow{4}{*}{$32$}
 & 128  & $1.02 \times 10^{-7}$  & $1.93 \times 10^{-7}$  & $2.25 \times 10^{-7}$  & $3.04 \times 10^{-7}$ \\
 & 256  & $1.41 \times 10^{-11}$ & $4.13 \times 10^{-11}$ & $1.24 \times 10^{-11}$ & $3.79 \times 10^{-11}$ \\
 & 512  & $2.71 \times 10^{-15}$ & $7.60 \times 10^{-15}$ & $2.60 \times 10^{-15}$ & $9.21 \times 10^{-15}$ \\
 & 1024 & $3.30 \times 10^{-15}$ & $6.33 \times 10^{-15}$ & $3.95 \times 10^{-15}$ & $9.72 \times 10^{-15}$ \\
\midrule
\multirow{4}{*}{$128$}
 &  256   & $2.63 \times 10^{-5}$  & $3.57 \times 10^{-5}$  & $2.46 \times 10^{-5}$  & $1.69 \times 10^{-4}$ \\
 &  512   & $1.77 \times 10^{-13}$ & $6.10 \times 10^{-13}$ & $5.47 \times 10^{-13}$ & $2.00 \times 10^{-12}$ \\
 &  1024  & $1.38 \times 10^{-13}$ & $3.99 \times 10^{-13}$ & $4.00 \times 10^{-13}$ & $1.42 \times 10^{-12}$ \\
 &  2048  & $1.04 \times 10^{-13}$ & $2.97 \times 10^{-13}$ & $2.55 \times 10^{-13}$ & $8.10 \times 10^{-13}$ \\
\bottomrule
\end{tabular}

\end{table}

\subsection{Transmission problem}\label{sec:transmission}

We now turn to the core subject of this work. In all the experiments the Lam\'e parameters are $\lambda_1=2$, $\mu_1=3$ in the exterior and $\lambda_2=3$, $\mu_2=2$ in the interior, the density ratio is $\rho=2$, and we consider two frequencies, $\omega=4$ and $\omega=64$ (shear wavelengths $2\pi\sqrt{\mu_1}/\omega\approx 2.7$ and $0.17$, i.e.\ about $2.3$ and $37$ wavelengths along $\Gamma$), and the four coupling parameters $\alpha\in\{1,-1,1/2,-1/2\}$. The incident field is a superposition of a pressure and a shear plane wave travelling in the same direction $\bm d=(0,-1)$,
\[
\bm u^{\rm inc}(\bm x)=\frac{1}{\lambda_1+2\mu_1}\,e^{{\rm i}k_p^{(1)}\bm d\cdot\bm x}\,\bm d
+\frac{\bm d^\perp\!\cdot\bm p}{\mu_1}\,e^{{\rm i}k_s^{(1)}\bm d\cdot\bm x}\,\bm d^\perp,
\qquad \bm d^\perp=(d_2,-d_1),\quad \bm p=\tfrac{1}{\sqrt2}(1,1),
\]
i.e.\ a P-wave of amplitude $1/8$ polarized along $\bm d$ and an S-wave of amplitude $1/(3\sqrt2)$ polarized along $\bm d^\perp$, and the data $(\bm f,\bm\lambda)$ in~\eqref{eq:cfie_t} are its traces in the $(\bm n,\bm t)$ frame. Since no closed-form solution is available, errors are measured against a reference solution computed with the preconditioned formulation with $\alpha=-1/2$ and Gaussian elimination on a mesh with twice as many points as the finest mesh reported; the errors are evaluated at $1240$ observation points. The errors displayed for the two finest meshes are therefore of the size of the accuracy floor of the reference solution itself and should not be read as convergence figures.

The results are collected in Table~\ref{tab:circle}. For the ellipse and the cavity, Tables~\ref{tab:ellipse-omega-4}--\ref{tab:cavity-omega-64} report, for each $\alpha$, the errors of the solutions of the preconditioned and unpreconditioned systems obtained by Gaussian elimination, the  condition numbers of both matrices and the number of GMRES iterations; an entry ``$>k$'' in the latter indicates that GMRES stagnated after $k$ iterations without reaching the tolerance. For the circle only the best-performing value $\alpha=-1/2$ is shown (Table~\ref{tab:circle}).

\subsubsection{The circle as a control experiment}
For the unit circle the discrete preconditioned matrices behave exactly as Theorem~\ref{theo:01} predicts for the operators. The condition number of $\mathfrak P^{\rm left}\mathfrak A_\alpha\mathfrak P^{\rm right}$ is independent of the discretization: about $8.4$ for $\omega=4$ and about $2.43\cdot10^3$ for $\omega=64$, the ratio reflecting the $\omega^2$ scaling of $\mathcal S_{21}$, cf.~\eqref{eq:S21}, and of the compact remainder, whereas that of the original matrix grows like $N^4$, from $2\cdot10^5$ to $10^{10}$ at $\omega=4$ and up to $7\cdot10^8$ at $\omega=64$. The number of GMRES iterations for the preconditioned system is $15$ for all $N$ at $\omega=4$ and $105$--$107$ for all $N\ge256$ at $\omega=64$, against $99$--$1060$ and $253$--$2206$, growing linearly with $N$, for the unpreconditioned system. The dependence on $\alpha$ is already visible here: for $\omega=64$ the preconditioned solver needs $354$, $353$, $141$ and $107$ iterations for $\alpha=1,\tfrac12,-1,-\tfrac12$ respectively (not shown), while the unpreconditioned counts are insensitive to $\alpha$. The errors reach the accuracy floor of the reference solution ($4\cdot10^{-10}$ at $\omega=4$ and $1.5\cdot10^{-11}$ at $\omega=64$) for $N\ge 32$ and $N\ge 256$ respectively, i.e.\ with about seven points per shear wavelength at the higher frequency.
\begin{table}[htbp]
\centering
\caption{Errors, matrix condition numbers, and number of GMRES iterations for the circle experiment ($\lambda_1 = 2$, $\mu_1 = 3$, $\lambda_2 = 3$, $\mu_2 = 2$, $\rho = 2$) with $\omega=4$ (top) and $\omega=64$ (bottom). The coupling parameter is $\alpha=-1/2$. $N$ is the number of quadrature points per density; the size of the linear system is $4N \times 4N$. GMRES is run without restart with a relative residual tolerance of $10^{-10}$ for both systems. In each block, ``Prec.'' refers to the preconditioned $(\nnn,\ttt)$-CFIE$_\alpha$ and ``Orig.'' to the $(\nnn,\ttt)$-CFIE$_\alpha$ itself.}
\label{tab:circle}

\setlength{\tabcolsep}{3.5pt}
\renewcommand{\arraystretch}{1.15}
\begin{subtable}{\textwidth}
\centering
\caption{$\omega=4$}

\resizebox{\textwidth}{!}{%
\begin{tabular}{
  r
  @{\hspace{2\tabcolsep}}
  *{4}{S[table-format=1.2e-2]}
  *{2}{S[table-format=1.2e+1,retain-zero-exponent = true]}
  *{2}{c}
}
\toprule
& \multicolumn{2}{c}{Prec. $(\nnn,\ttt)$-CFIE$_\alpha$}
& \multicolumn{2}{c}{$(\nnn,\ttt)$-CFIE$_\alpha$}
& \multicolumn{2}{c}{Condition number}
& \multicolumn{2}{c}{GMRES iterations} \\
\cmidrule(lr){2-3}
\cmidrule(lr){4-5}
\cmidrule(lr){6-7}
\cmidrule(lr){8-9}
$N$
& \multicolumn{1}{c}{$\lVert\nabla e_p\rVert$}
& \multicolumn{1}{c}{$\lVert\operatorname{curl} e_s\rVert$}
& \multicolumn{1}{c}{$\lVert\nabla e_p\rVert$}
& \multicolumn{1}{c}{$\lVert\operatorname{curl} e_s\rVert$}
& \multicolumn{1}{c}{Prec.}
& \multicolumn{1}{c}{Orig.}
& \multicolumn{1}{c}{Prec.}
& \multicolumn{1}{c}{Orig.} \\
\midrule
32
& 4.03e-10  & 7.79e-10
& 4.03e-10  & 7.79e-10
& 8.44e0   & 1.95e5
& 15      & 99 \\

64
& 4.03e-10  & 7.79e-10
& 4.03e-10  & 7.79e-10
& 8.44e0   & 2.98e6
& 15      & 163 \\

128
& 4.04e-10  & 7.81e-10
& 4.03e-10  & 7.79e-10
& 8.44e0   & 4.63e7
& 15      & 292 \\

256
& 4.02e-10  & 7.79e-10
& 4.03e-10  & 7.79e-10
& 8.44e0   & 7.31e8
& 15      & 548 \\

512
& 4.06e-10  & 6.12e-10
& 4.03e-10  & 7.79e-10
& 8.44e0   & 1.16e10
& 15      & 1060 \\

\bottomrule
\end{tabular}%
}
\end{subtable}

\medskip
\begin{subtable}{\textwidth}
\centering
\caption{$\omega=64$}

\resizebox{\textwidth}{!}{%
\begin{tabular}{
  r
  @{\hspace{2\tabcolsep}}
  *{4}{S[table-format=1.2e-2]}
  *{2}{S[table-format=1.2e+1]}
  *{2}{c}
}
\toprule
& \multicolumn{2}{c}{Prec. $(\nnn,\ttt)$-CFIE$_\alpha$}
& \multicolumn{2}{c}{$(\nnn,\ttt)$-CFIE$_\alpha$}
& \multicolumn{2}{c}{Condition number}
& \multicolumn{2}{c}{GMRES iterations} \\
\cmidrule(lr){2-3}
\cmidrule(lr){4-5}
\cmidrule(lr){6-7}
\cmidrule(lr){8-9}
$N$
& \multicolumn{1}{c}{$\lVert\nabla e_p\rVert$}
& \multicolumn{1}{c}{$\lVert\operatorname{curl} e_s\rVert$}
& \multicolumn{1}{c}{$\lVert\nabla e_p\rVert$}
& \multicolumn{1}{c}{$\lVert\operatorname{curl} e_s\rVert$}
& \multicolumn{1}{c}{Prec.}
& \multicolumn{1}{c}{Orig.}
& \multicolumn{1}{c}{Prec.}
& \multicolumn{1}{c}{Orig.} \\
\midrule
64
& 6.78e-2  & 5.85e-1
& 6.78e-2  & 5.85e-1
& 2.45e3   & 3.60e4
& 147      & 253 \\

128
& 1.65e-6  & 5.06e-4
& 1.65e-6  & 5.06e-4
& 2.43e3   & 4.54e5
& 120      & 432 \\

256
& 1.55e-11  & 8.59e-12
& 1.55e-11  & 8.60e-12
& 2.43e3   & 2.61e6
& 107      & 670 \\

512
& 1.58e-11  & 8.61e-12
& 1.55e-11  & 8.62e-12
& 2.43e3   & 4.43e7
& 105      & 1182 \\

1024
& 1.78e-11  & 8.41e-12
& 1.55e-11  & 8.63e-12
& 2.43e3   & 7.18e8
& 107      & 2206 \\

\bottomrule
\end{tabular}%
}
\end{subtable}

\end{table}

\subsubsection{The ellipse and the cavity}

\paragraph{Accuracy.} For the ellipse and the cavity we observe the typical behaviour of a spectral method: fast convergence once the mesh resolves the solution, followed by stagnation at the accuracy floor. For the ellipse the floor is $10^{-11}$ ($\omega=4$) and $2\cdot10^{-10}$ ($\omega=64$); for the cavity it is $3\cdot10^{-9}$ and $7\cdot10^{-11}$, two orders of magnitude higher than for the ellipse at the lower frequency, which we attribute to roundoff amplified by the much larger condition numbers of the cavity matrices. In all cases the errors of the preconditioned and unpreconditioned solutions agree to all displayed digits (and so do those of the GMRES solutions). The ill-conditioning of the unpreconditioned matrices, with condition numbers up to $2.6\cdot10^{12}$, therefore has a negligible effect on the accuracy when the system is solved by Gaussian elimination. This can be explained by the nature of the data: the extreme singular values of the matrix correspond to highly oscillatory singular vectors, on which the smooth incident field has a negligible projection, so that the ill-conditioning is never excited (see also Section~\ref{sec:spectrum}).

\paragraph{GMRES iterations.} Without preconditioning the number of GMRES iterations grows linearly with $N$ for all geometries and frequencies---it is essentially equal to the number of degrees of freedom divided by a constant---reaching $1220$ for the ellipse ($N=512$, $\omega=64$) and more than $2700$ for the cavity ($N=1024$, $\omega=64$); for the cavity at $\omega=4$ GMRES even stagnates before reaching the tolerance on the finest meshes. This is the expected behaviour for an operator that is not a compact perturbation of an invertible one. With preconditioning the number of iterations becomes independent of $N$ as soon as the mesh resolves the solution (the only exception being $\alpha=1$ on the finest mesh of the cavity at $\omega=4$, where the preconditioned GMRES also stagnates, at a residual close to the tolerance): with $\alpha=-1/2$, $23$ iterations for the ellipse at $\omega=4$ and $94$ at $\omega=64$, and about $100$ and $200$ for the cavity at $\omega=4$ and $\omega=64$ (with a very slow growth, from $73$ to $108$, at the lower frequency). The gain is substantial for all geometries and frequencies, but the sign of $\alpha$ matters: at $\omega=64$, negative values of $\alpha$ require three to four times fewer iterations than positive ones for the circle and the ellipse ($354$ vs.\ $107$ and $411$ vs.\ $94$ for $\alpha=1$ vs.\ $\alpha=-1/2$) and about twice fewer for the cavity ($459$ vs.\ $200$), while at $\omega=4$ the difference is smaller ($34$ vs.\ $23$ for the ellipse) or negligible (cavity). In all cases $\alpha=-1/2$ performs best or on a par with $\alpha=-1$. A theoretical explanation of this dependence, and the determination of an optimal $\alpha$ as a function of $(\lambda_j,\mu_j)$, $\rho$ and $\omega$, is left for future work.

\paragraph{Condition numbers.} Here the three geometries behave differently, and the differences are instructive. For the circle, as discussed above, the condition number of the preconditioned matrix is bounded independently of $N$. For the ellipse at $\omega=64$ it is also essentially bounded, about $2.2\cdot10^3$ for $N\ge128$ with $\alpha=-1/2$, i.e.\ of the same size as for the circle, while that of the original matrix grows from $10^5$ to $4.4\cdot10^7$; at $\omega=4$ it grows moderately, from $2\cdot10^2$ to $1.6\cdot10^4$, against $2\cdot10^5$ to $7\cdot10^8$ for the original matrix. For the cavity the condition numbers of the preconditioned matrices are large and grow with $N$: from $10^7$ to $1.3\cdot10^{10}$ at $\omega=4$ and between $5\cdot10^5$ and $9\cdot10^7$ at $\omega=64$ (for $\alpha=-1/2$); on the coarsest mesh at $\omega=4$ they even exceed those of the original matrices, which themselves reach $2.6\cdot10^{12}$. Yet the GMRES iteration counts of the preconditioned systems are essentially constant in $N$ for the cavity as well. This is precisely the situation anticipated in Section~\ref{sec:implementation}: the preconditioner clusters the bulk of the spectrum at $1$, which is what GMRES sees, while a small number of outlying singular values, produced by the discrete highest modes through the interaction of the second-order factor $(\mathcal H\mathcal D+\mathcal J)^2$ with the strongly varying coefficients $\kappa$ and $\eta$ of the cavity, grow with $N$ and dominate the condition number. The singular value and eigenvalue distributions of the next section confirm this picture, see Figures~\ref{fig:eigenvalues:cavity} and \ref{fig:svd:cavity}. It also indicates that the condition number alone is a poor indicator of the quality of an analytical preconditioner of this type on complex geometries, and that the spectral distribution should be examined instead.

\begin{table}[htbp]
\centering

\caption{Errors,  matrix condition numbers, and number of GMRES iterations for the ellipse experiment ($\lambda_1 = 2$, $\mu_1 = 3$, $\lambda_2 = 3$, $\mu_2 = 2$, $\rho = 2$) with $\omega=4$. The coupling parameter $\alpha$ in the formulation takes the values $1, -1, 1/2, -1/2$. $N$ is the number of quadrature points per density; the size of the linear system is $4N \times 4N$. GMRES is run without restart with a relative residual tolerance of $10^{-10}$ for both systems. In each block, ``Prec.'' refers to the preconditioned $(\nnn,\ttt)$-CFIE$_\alpha$ and ``Orig.'' to the $(\nnn,\ttt)$-CFIE$_\alpha$ itself.}
\label{tab:ellipse-omega-4}

\setlength{\tabcolsep}{3.5pt}
\renewcommand{\arraystretch}{1.15}

\begin{subtable}{\textwidth}
\centering
\caption{$\alpha=1$}

\resizebox{\textwidth}{!}{%
\begin{tabular}{
  r
  @{\hspace{2\tabcolsep}}
  *{4}{S[table-format=1.2e-2]}
  *{2}{S[table-format=1.2e+1]}
  *{2}{c}
}
\toprule
& \multicolumn{2}{c}{Prec. $(\nnn,\ttt)$-CFIE$_\alpha$}
& \multicolumn{2}{c}{$(\nnn,\ttt)$-CFIE$_\alpha$}
& \multicolumn{2}{c}{Condition number}
& \multicolumn{2}{c}{GMRES iterations} \\
\cmidrule(lr){2-3}
\cmidrule(lr){4-5}
\cmidrule(lr){6-7}
\cmidrule(lr){8-9}
$N$
& \multicolumn{1}{c}{$\lVert\nabla e_p\rVert$}
& \multicolumn{1}{c}{$\lVert\operatorname{curl} e_s\rVert$}
& \multicolumn{1}{c}{$\lVert\nabla e_p\rVert$}
& \multicolumn{1}{c}{$\lVert\operatorname{curl} e_s\rVert$}
& \multicolumn{1}{c}{Prec.}
& \multicolumn{1}{c}{Orig.}
& \multicolumn{1}{c}{Prec.}
& \multicolumn{1}{c}{Orig.} \\
\midrule
32
& 1.81e-5  & 4.38e-5
& 1.81e-5  & 4.38e-5
& 2.70e1   & 1.85e5
& 35      & 128 \\

64
& 2.11e-9  & 5.18e-9
& 2.11e-9  & 5.18e-9
& 2.70e1   & 3.20e6
& 34      & 222 \\

128
& 1.21e-11  & 3.12e-11
& 1.20e-11  & 3.13e-11
& 7.60e1   & 5.29e7
& 34      & 362 \\

256
& 1.22e-11  & 2.98e-11
& 1.24e-11  & 2.97e-11
& 3.02e2   & 8.60e8
& 34      & 616 \\

\bottomrule
\end{tabular}%
}
\end{subtable}

\medskip

\begin{subtable}{\textwidth}
\centering
\caption{$\alpha=-1$}

\resizebox{\textwidth}{!}{%
\begin{tabular}{
  r
  @{\hspace{2\tabcolsep}}
  *{4}{S[table-format=1.2e-2]}
  *{2}{S[table-format=1.2e+1]}
  *{2}{c}
}
\toprule
& \multicolumn{2}{c}{Prec. $(\nnn,\ttt)$-CFIE$_\alpha$}
& \multicolumn{2}{c}{$(\nnn,\ttt)$-CFIE$_\alpha$}
& \multicolumn{2}{c}{Condition number}
& \multicolumn{2}{c}{GMRES iterations} \\
\cmidrule(lr){2-3}
\cmidrule(lr){4-5}
\cmidrule(lr){6-7}
\cmidrule(lr){8-9}
$N$
& \multicolumn{1}{c}{$\lVert\nabla e_p\rVert$}
& \multicolumn{1}{c}{$\lVert\operatorname{curl} e_s\rVert$}
& \multicolumn{1}{c}{$\lVert\nabla e_p\rVert$}
& \multicolumn{1}{c}{$\lVert\operatorname{curl} e_s\rVert$}
& \multicolumn{1}{c}{Prec.}
& \multicolumn{1}{c}{Orig.}
& \multicolumn{1}{c}{Prec.}
& \multicolumn{1}{c}{Orig.} \\
\midrule
32
& 1.58e-5  & 4.69e-5
& 1.58e-5  & 4.69e-5
& 2.14e2   & 1.84e5
& 23      & 127 \\

64
& 1.83e-9  & 5.38e-9
& 1.83e-9  & 5.38e-9
& 8.84e2   & 3.19e6
& 23      & 222 \\

128
& 1.22e-11  & 3.13e-11
& 1.20e-11  & 3.16e-11
& 3.62e3   & 5.29e7
& 23      & 362 \\

256
& 1.31e-11  & 3.17e-11
& 1.22e-11  & 3.00e-11
& 1.47e4   & 8.60e8
& 23      & 616 \\

\bottomrule
\end{tabular}%
}
\end{subtable}

\medskip

\begin{subtable}{\textwidth}
\centering
\caption{$\alpha=\frac{1}{2}$}

\resizebox{\textwidth}{!}{%
\begin{tabular}{
  r
  @{\hspace{2\tabcolsep}}
  *{4}{S[table-format=1.2e-2]}
  *{2}{S[table-format=1.2e+1]}
  *{2}{c}
}
\toprule
& \multicolumn{2}{c}{Prec. $(\nnn,\ttt)$-CFIE$_\alpha$}
& \multicolumn{2}{c}{$(\nnn,\ttt)$-CFIE$_\alpha$}
& \multicolumn{2}{c}{Condition number}
& \multicolumn{2}{c}{GMRES iterations} \\
\cmidrule(lr){2-3}
\cmidrule(lr){4-5}
\cmidrule(lr){6-7}
\cmidrule(lr){8-9}
$N$
& \multicolumn{1}{c}{$\lVert\nabla e_p\rVert$}
& \multicolumn{1}{c}{$\lVert\operatorname{curl} e_s\rVert$}
& \multicolumn{1}{c}{$\lVert\nabla e_p\rVert$}
& \multicolumn{1}{c}{$\lVert\operatorname{curl} e_s\rVert$}
& \multicolumn{1}{c}{Prec.}
& \multicolumn{1}{c}{Orig.}
& \multicolumn{1}{c}{Prec.}
& \multicolumn{1}{c}{Orig.} \\
\midrule
32
& 1.87e-5  & 4.52e-5
& 1.87e-5  & 4.52e-5
& 2.97e1   & 1.98e5
& 35      & 128 \\

64
& 2.18e-9  & 5.33e-9
& 2.18e-9  & 5.33e-9
& 4.07e1   & 3.00e6
& 34      & 222 \\

128
& 1.20e-11  & 3.17e-11
& 1.20e-11  & 3.15e-11
& 1.61e2   & 4.65e7
& 34      & 362 \\

256
& 1.31e-11  & 3.37e-11
& 1.24e-11  & 2.95e-11
& 6.45e2   & 7.32e8
& 34      & 616 \\

\bottomrule
\end{tabular}%
}
\end{subtable}

\medskip

\begin{subtable}{\textwidth}
\centering
\caption{$\alpha=-\frac{1}{2}$}

\resizebox{\textwidth}{!}{%
\begin{tabular}{
  r
  @{\hspace{2\tabcolsep}}
  *{4}{S[table-format=1.2e-2]}
  *{2}{S[table-format=1.2e+1]}
  *{2}{c}
}
\toprule
& \multicolumn{2}{c}{Prec. $(\nnn,\ttt)$-CFIE$_\alpha$}
& \multicolumn{2}{c}{$(\nnn,\ttt)$-CFIE$_\alpha$}
& \multicolumn{2}{c}{Condition number}
& \multicolumn{2}{c}{GMRES iterations} \\
\cmidrule(lr){2-3}
\cmidrule(lr){4-5}
\cmidrule(lr){6-7}
\cmidrule(lr){8-9}
$N$
& \multicolumn{1}{c}{$\lVert\nabla e_p\rVert$}
& \multicolumn{1}{c}{$\lVert\operatorname{curl} e_s\rVert$}
& \multicolumn{1}{c}{$\lVert\nabla e_p\rVert$}
& \multicolumn{1}{c}{$\lVert\operatorname{curl} e_s\rVert$}
& \multicolumn{1}{c}{Prec.}
& \multicolumn{1}{c}{Orig.}
& \multicolumn{1}{c}{Prec.}
& \multicolumn{1}{c}{Orig.} \\
\midrule
32
& 1.52e-5  & 4.68e-5
& 1.52e-5  & 4.68e-5
& 2.35e2   & 1.97e5
& 23      & 126 \\

64
& 1.77e-9  & 5.30e-9
& 1.77e-9  & 5.30e-9
& 9.69e2   & 2.99e6
& 23      & 222 \\

128
& 1.22e-11  & 3.11e-11
& 1.21e-11  & 3.13e-11
& 3.97e3   & 4.65e7
& 23      & 362 \\

256
& 1.20e-11  & 3.28e-11
& 1.23e-11  & 3.03e-11
& 1.61e4   & 7.32e8
& 23      & 616 \\

\bottomrule
\end{tabular}%
}
\end{subtable}

\end{table}

\begin{table}[htbp]
\centering

\caption{Errors, matrix condition numbers, and number of GMRES iterations for the ellipse experiment ($\lambda_1 = 2$, $\mu_1 = 3$, $\lambda_2 = 3$, $\mu_2 = 2$, $\rho = 2$) with $\omega=64$. The coupling parameter $\alpha$ in the formulation takes the values $1, -1, 1/2, -1/2$. $N$ is the number of quadrature points per density; the size of the linear system is $4N \times 4N$. GMRES is run without restart with a relative residual tolerance of $10^{-10}$ for both systems. In each block, ``Prec.'' refers to the preconditioned $(\nnn,\ttt)$-CFIE$_\alpha$ and ``Orig.'' to the $(\nnn,\ttt)$-CFIE$_\alpha$ itself.}
\label{tab:ellipse-omega-64}

\setlength{\tabcolsep}{3.5pt}
\renewcommand{\arraystretch}{1.15}

\begin{subtable}{\textwidth}
\centering
\caption{$\alpha=1$}

\resizebox{\textwidth}{!}{%
\begin{tabular}{
  r
  @{\hspace{2\tabcolsep}}
  *{4}{S[table-format=1.2e-2]}
  *{2}{S[table-format=1.2e+1]}
  *{2}{c}
}
\toprule
& \multicolumn{2}{c}{Prec. $(\nnn,\ttt)$-CFIE$_\alpha$}
& \multicolumn{2}{c}{$(\nnn,\ttt)$-CFIE$_\alpha$}
& \multicolumn{2}{c}{Condition number}
& \multicolumn{2}{c}{GMRES iterations} \\
\cmidrule(lr){2-3}
\cmidrule(lr){4-5}
\cmidrule(lr){6-7}
\cmidrule(lr){8-9}
$N$
& \multicolumn{1}{c}{$\lVert\nabla e_p\rVert$}
& \multicolumn{1}{c}{$\lVert\operatorname{curl} e_s\rVert$}
& \multicolumn{1}{c}{$\lVert\nabla e_p\rVert$}
& \multicolumn{1}{c}{$\lVert\operatorname{curl} e_s\rVert$}
& \multicolumn{1}{c}{Prec.}
& \multicolumn{1}{c}{Orig.}
& \multicolumn{1}{c}{Prec.}
& \multicolumn{1}{c}{Orig.} \\
\midrule
64
& 2.49e-2  & 4.92e-1
& 2.49e-2  & 4.92e-1
& 3.11e3   & 2.53e4
& 252      & 250 \\

128
& 1.58e-5  & 1.89e-4
& 1.58e-5  & 1.89e-4
& 3.61e3   & 1.36e5
& 413      & 452 \\

256
& 2.35e-10  & 3.72e-10
& 2.35e-10  & 3.72e-10
& 3.61e3   & 3.05e6
& 411      & 708 \\

512
& 2.01e-10  & 4.03e-10
& 2.01e-10  & 4.03e-10
& 3.61e3   & 5.30e7
& 411      & 1220 \\

\bottomrule
\end{tabular}%
}
\end{subtable}

\medskip

\begin{subtable}{\textwidth}
\centering
\caption{$\alpha=-1$}

\resizebox{\textwidth}{!}{%
\begin{tabular}{
  r
  @{\hspace{2\tabcolsep}}
  *{4}{S[table-format=1.2e-2]}
  *{2}{S[table-format=1.2e+1]}
  *{2}{c}
}
\toprule
& \multicolumn{2}{c}{Prec. $(\nnn,\ttt)$-CFIE$_\alpha$}
& \multicolumn{2}{c}{$(\nnn,\ttt)$-CFIE$_\alpha$}
& \multicolumn{2}{c}{Condition number}
& \multicolumn{2}{c}{GMRES iterations} \\
\cmidrule(lr){2-3}
\cmidrule(lr){4-5}
\cmidrule(lr){6-7}
\cmidrule(lr){8-9}
$N$
& \multicolumn{1}{c}{$\lVert\nabla e_p\rVert$}
& \multicolumn{1}{c}{$\lVert\operatorname{curl} e_s\rVert$}
& \multicolumn{1}{c}{$\lVert\nabla e_p\rVert$}
& \multicolumn{1}{c}{$\lVert\operatorname{curl} e_s\rVert$}
& \multicolumn{1}{c}{Prec.}
& \multicolumn{1}{c}{Orig.}
& \multicolumn{1}{c}{Prec.}
& \multicolumn{1}{c}{Orig.} \\
\midrule
64
& 1.01e-1  & 6.59e-1
& 1.01e-1  & 6.59e-1
& 2.14e3   & 8.60e4
& 146      & 250 \\

128
& 2.47e-5  & 4.51e-4
& 2.47e-5  & 4.51e-4
& 1.90e3   & 1.37e5
& 129      & 433 \\

256
& 1.78e-10  & 5.12e-10
& 1.78e-10  & 5.12e-10
& 1.90e3   & 3.03e6
& 122      & 682 \\

512
& 1.70e-10  & 3.26e-10
& 1.70e-10  & 3.26e-10
& 1.90e3   & 5.28e7
& 122      & 1194 \\

\bottomrule
\end{tabular}%
}
\end{subtable}

\medskip

\begin{subtable}{\textwidth}
\centering
\caption{$\alpha=\frac{1}{2}$}

\resizebox{\textwidth}{!}{%
\begin{tabular}{
  r
  @{\hspace{2\tabcolsep}}
  *{4}{S[table-format=1.2e-2]}
  *{2}{S[table-format=1.2e+1]}
  *{2}{c}
}
\toprule
& \multicolumn{2}{c}{Prec. $(\nnn,\ttt)$-CFIE$_\alpha$}
& \multicolumn{2}{c}{$(\nnn,\ttt)$-CFIE$_\alpha$}
& \multicolumn{2}{c}{Condition number}
& \multicolumn{2}{c}{GMRES iterations} \\
\cmidrule(lr){2-3}
\cmidrule(lr){4-5}
\cmidrule(lr){6-7}
\cmidrule(lr){8-9}
$N$
& \multicolumn{1}{c}{$\lVert\nabla e_p\rVert$}
& \multicolumn{1}{c}{$\lVert\operatorname{curl} e_s\rVert$}
& \multicolumn{1}{c}{$\lVert\nabla e_p\rVert$}
& \multicolumn{1}{c}{$\lVert\operatorname{curl} e_s\rVert$}
& \multicolumn{1}{c}{Prec.}
& \multicolumn{1}{c}{Orig.}
& \multicolumn{1}{c}{Prec.}
& \multicolumn{1}{c}{Orig.} \\
\midrule
64
& 1.81e-2  & 5.00e-1
& 1.81e-2  & 5.00e-1
& 3.51e3   & 2.72e4
& 251      & 255 \\

128
& 1.44e-5  & 1.96e-4
& 1.44e-5  & 1.96e-4
& 3.85e3   & 1.48e5
& 409      & 445 \\

256
& 2.23e-10  & 3.91e-10
& 2.23e-10  & 3.91e-10
& 3.85e3   & 2.70e6
& 405      & 701 \\

512
& 2.20e-10  & 4.72e-10
& 2.20e-10  & 4.72e-10
& 3.85e3   & 4.47e7
& 405      & 1213 \\

\bottomrule
\end{tabular}%
}
\end{subtable}

\medskip

\begin{subtable}{\textwidth}
\centering
\caption{$\alpha=-\frac{1}{2}$}

\resizebox{\textwidth}{!}{%
\begin{tabular}{
  r
  @{\hspace{2\tabcolsep}}
  *{4}{S[table-format=1.2e-2]}
  *{2}{S[table-format=1.2e+1]}
  *{2}{c}
}
\toprule
& \multicolumn{2}{c}{Prec. $(\nnn,\ttt)$-CFIE$_\alpha$}
& \multicolumn{2}{c}{$(\nnn,\ttt)$-CFIE$_\alpha$}
& \multicolumn{2}{c}{Condition number}
& \multicolumn{2}{c}{GMRES iterations} \\
\cmidrule(lr){2-3}
\cmidrule(lr){4-5}
\cmidrule(lr){6-7}
\cmidrule(lr){8-9}
$N$
& \multicolumn{1}{c}{$\lVert\nabla e_p\rVert$}
& \multicolumn{1}{c}{$\lVert\operatorname{curl} e_s\rVert$}
& \multicolumn{1}{c}{$\lVert\nabla e_p\rVert$}
& \multicolumn{1}{c}{$\lVert\operatorname{curl} e_s\rVert$}
& \multicolumn{1}{c}{Prec.}
& \multicolumn{1}{c}{Orig.}
& \multicolumn{1}{c}{Prec.}
& \multicolumn{1}{c}{Orig.} \\
\midrule
64
& 9.22e-2  & 4.66e-1
& 9.22e-2  & 4.66e-1
& 4.17e3   & 9.95e4
& 122      & 254 \\

128
& 1.85e-5  & 3.44e-4
& 1.85e-5  & 3.44e-4
& 2.19e3   & 1.52e5
& 97      & 435 \\

256
& 1.87e-10  & 4.83e-10
& 1.87e-10  & 4.83e-10
& 2.19e3   & 2.61e6
& 94      & 684 \\

512
& 1.52e-10  & 3.27e-10
& 1.52e-10  & 3.27e-10
& 2.19e3   & 4.44e7
& 94      & 1196 \\

\bottomrule
\end{tabular}%
}
\end{subtable}

\end{table}

\begin{table}[htbp]
\centering

\caption{Errors,   matrix condition numbers, and number of GMRES iterations for the cavity experiment ($\lambda_1 = 2$, $\mu_1 = 3$, $\lambda_2 = 3$, $\mu_2 = 2$, $\rho = 2$) with $\omega=4$. The coupling parameter $\alpha$ in the formulation takes the values $1, -1, 1/2, -1/2$. $N$ is the number of quadrature points per density; the size of the linear system is $4N \times 4N$. GMRES is run without restart with a relative residual tolerance of $10^{-10}$ for both systems. In each block, ``Prec.'' refers to the preconditioned $(\nnn,\ttt)$-CFIE$_\alpha$ and ``Orig.'' to the $(\nnn,\ttt)$-CFIE$_\alpha$ itself.}
\label{tab:cavity-omega-4}

\setlength{\tabcolsep}{3.5pt}
\renewcommand{\arraystretch}{1.15}


\begin{subtable}{\textwidth}
\centering
\caption{$\alpha=1$}

\resizebox{\textwidth}{!}{%
\begin{tabular}{
  r
  @{\hspace{2\tabcolsep}}
  *{4}{S[table-format=1.2e-2]}
  *{2}{S[table-format=1.2e+1]}
  *{2}{c}
}
\toprule
& \multicolumn{2}{c}{Prec. $(\nnn,\ttt)$-CFIE$_\alpha$}
& \multicolumn{2}{c}{$(\nnn,\ttt)$-CFIE$_\alpha$}
& \multicolumn{2}{c}{Condition number}
& \multicolumn{2}{c}{GMRES iterations} \\
\cmidrule(lr){2-3}
\cmidrule(lr){4-5}
\cmidrule(lr){6-7}
\cmidrule(lr){8-9}
$N$
& \multicolumn{1}{c}{$\lVert\nabla e_p\rVert$}
& \multicolumn{1}{c}{$\lVert\operatorname{curl} e_s\rVert$}
& \multicolumn{1}{c}{$\lVert\nabla e_p\rVert$}
& \multicolumn{1}{c}{$\lVert\operatorname{curl} e_s\rVert$}
& \multicolumn{1}{c}{Prec.}
& \multicolumn{1}{c}{Orig.}
& \multicolumn{1}{c}{Prec.}
& \multicolumn{1}{c}{Orig.} \\
\midrule
32
& 7.37e-4  & 2.77e-3
& 7.37e-4  & 2.77e-3
& 1.39e5   & 3.06e6
& 87      & 128 \\

64
& 3.36e-4  & 1.01e-3
& 3.36e-4  & 1.01e-3
& 7.60e5   & 2.02e7
& 89      & 256 \\

128
& 5.10e-5  & 1.56e-4
& 5.10e-5  & 1.56e-4
& 1.48e7   & 7.83e8
& 102      & 490 \\

256
& 3.57e-9  & 1.21e-8
& 3.58e-9  & 1.22e-8
& 1.55e8   & 4.08e10
& 108      & 950 \\

512
& 3.35e-9  & 8.32e-9
& 3.30e-9  & 8.22e-9
& 5.53e9   & 2.01e12
& {$>$119}      & {$>$1754} \\

\bottomrule
\end{tabular}%
}
\end{subtable}

\medskip

\begin{subtable}{\textwidth}
\centering
\caption{$\alpha=-1$}

\resizebox{\textwidth}{!}{%
\begin{tabular}{
  r
  @{\hspace{2\tabcolsep}}
  *{4}{S[table-format=1.2e-2]}
  *{2}{S[table-format=1.2e+1]}
  *{2}{c}
}
\toprule
& \multicolumn{2}{c}{Prec. $(\nnn,\ttt)$-CFIE$_\alpha$}
& \multicolumn{2}{c}{$(\nnn,\ttt)$-CFIE$_\alpha$}
& \multicolumn{2}{c}{Condition number}
& \multicolumn{2}{c}{GMRES iterations} \\
\cmidrule(lr){2-3}
\cmidrule(lr){4-5}
\cmidrule(lr){6-7}
\cmidrule(lr){8-9}
$N$
& \multicolumn{1}{c}{$\lVert\nabla e_p\rVert$}
& \multicolumn{1}{c}{$\lVert\operatorname{curl} e_s\rVert$}
& \multicolumn{1}{c}{$\lVert\nabla e_p\rVert$}
& \multicolumn{1}{c}{$\lVert\operatorname{curl} e_s\rVert$}
& \multicolumn{1}{c}{Prec.}
& \multicolumn{1}{c}{Orig.}
& \multicolumn{1}{c}{Prec.}
& \multicolumn{1}{c}{Orig.} \\
\midrule
32
& 8.66e-4  & 2.22e-3
& 8.66e-4  & 2.22e-3
& 6.33e6   & 5.37e6
& 67      & 128 \\

64
& 3.34e-4  & 1.00e-3
& 3.34e-4  & 1.00e-3
& 1.55e7   & 3.33e7
& 76      & 256 \\

128
& 5.10e-5  & 1.56e-4
& 5.10e-5  & 1.56e-4
& 1.49e8   & 1.21e9
& 90      & 485 \\

256
& 3.53e-9  & 1.22e-8
& 3.58e-9  & 1.22e-8
& 9.14e8   & 5.46e10
& 97      & 948 \\

512
& 3.57e-9  & 8.72e-9
& 3.30e-9  & 8.22e-9
& 1.11e10   & 2.34e12
& 106      & {$>$1682} \\

\bottomrule
\end{tabular}%
}
\end{subtable}

\medskip

\begin{subtable}{\textwidth}
\centering
\caption{$\alpha=\frac{1}{2}$}

\resizebox{\textwidth}{!}{%
\begin{tabular}{
  r
  @{\hspace{2\tabcolsep}}
  *{4}{S[table-format=1.2e-2]}
  *{2}{S[table-format=1.2e+1]}
  *{2}{c}
}
\toprule
& \multicolumn{2}{c}{Prec. $(\nnn,\ttt)$-CFIE$_\alpha$}
& \multicolumn{2}{c}{$(\nnn,\ttt)$-CFIE$_\alpha$}
& \multicolumn{2}{c}{Condition number}
& \multicolumn{2}{c}{GMRES iterations} \\
\cmidrule(lr){2-3}
\cmidrule(lr){4-5}
\cmidrule(lr){6-7}
\cmidrule(lr){8-9}
$N$
& \multicolumn{1}{c}{$\lVert\nabla e_p\rVert$}
& \multicolumn{1}{c}{$\lVert\operatorname{curl} e_s\rVert$}
& \multicolumn{1}{c}{$\lVert\nabla e_p\rVert$}
& \multicolumn{1}{c}{$\lVert\operatorname{curl} e_s\rVert$}
& \multicolumn{1}{c}{Prec.}
& \multicolumn{1}{c}{Orig.}
& \multicolumn{1}{c}{Prec.}
& \multicolumn{1}{c}{Orig.} \\
\midrule
32
& 7.62e-4  & 2.94e-3
& 7.62e-4  & 2.94e-3
& 1.93e6   & 5.59e6
& 92      & 128 \\

64
& 3.35e-4  & 1.01e-3
& 3.35e-4  & 1.01e-3
& 3.25e6   & 3.10e7
& 96      & 256 \\

128
& 5.10e-5  & 1.56e-4
& 5.10e-5  & 1.56e-4
& 2.61e7   & 1.08e9
& 111      & 492 \\

256
& 3.58e-9  & 1.17e-8
& 3.58e-9  & 1.22e-8
& 2.04e8   & 5.31e10
& 117      & {$>$960} \\

512
& 3.33e-9  & 9.77e-9
& 3.30e-9  & 8.22e-9
& 6.91e9   & 2.38e12
& 129      & {$>$1786} \\

\bottomrule
\end{tabular}%
}
\end{subtable}

\medskip

\begin{subtable}{\textwidth}
\centering
\caption{$\alpha=-\frac{1}{2}$}

\resizebox{\textwidth}{!}{%
\begin{tabular}{
  r
  @{\hspace{2\tabcolsep}}
  *{4}{S[table-format=1.2e-2]}
  *{2}{S[table-format=1.2e+1]}
  *{2}{c}
}
\toprule
& \multicolumn{2}{c}{Prec. $(\nnn,\ttt)$-CFIE$_\alpha$}
& \multicolumn{2}{c}{$(\nnn,\ttt)$-CFIE$_\alpha$}
& \multicolumn{2}{c}{Condition number}
& \multicolumn{2}{c}{GMRES iterations} \\
\cmidrule(lr){2-3}
\cmidrule(lr){4-5}
\cmidrule(lr){6-7}
\cmidrule(lr){8-9}
$N$
& \multicolumn{1}{c}{$\lVert\nabla e_p\rVert$}
& \multicolumn{1}{c}{$\lVert\operatorname{curl} e_s\rVert$}
& \multicolumn{1}{c}{$\lVert\nabla e_p\rVert$}
& \multicolumn{1}{c}{$\lVert\operatorname{curl} e_s\rVert$}
& \multicolumn{1}{c}{Prec.}
& \multicolumn{1}{c}{Orig.}
& \multicolumn{1}{c}{Prec.}
& \multicolumn{1}{c}{Orig.} \\
\midrule
32
& 8.17e-4  & 2.06e-3
& 8.17e-4  & 2.06e-3
& 9.83e6   & 7.37e6
& 73      & 128 \\

64
& 3.35e-4  & 1.00e-3
& 3.35e-4  & 1.00e-3
& 1.63e7   & 3.93e7
& 79      & 256 \\

128
& 5.10e-5  & 1.56e-4
& 5.10e-5  & 1.56e-4
& 1.45e8   & 1.32e9
& 93      & 484 \\

256
& 3.59e-9  & 1.22e-8
& 3.58e-9  & 1.22e-8
& 8.41e8   & 6.11e10
& 99      & {$>$946} \\

512
& 2.62e-9  & 6.57e-9
& 3.30e-9  & 8.22e-9
& 1.31e10   & 2.59e12
& 108      & {$>$1684} \\

\bottomrule
\end{tabular}%
}
\end{subtable}
\end{table}

\begin{table}[htbp]
\centering

\caption{Errors, matrix condition numbers, and number of GMRES iterations for the cavity experiment ($\lambda_1 = 2$, $\mu_1 = 3$, $\lambda_2 = 3$, $\mu_2 = 2$, $\rho = 2$) with $\omega=64$. The coupling parameter $\alpha$ in the formulation takes the values $1, -1, 1/2, -1/2$. $N$ is the number of quadrature points per density; the size of the linear system is $4N \times 4N$. GMRES is run without restart with a relative residual tolerance of $10^{-10}$ for both systems. In each block, ``Prec.'' refers to the preconditioned $(\nnn,\ttt)$-CFIE$_\alpha$ and ``Orig.'' to the $(\nnn,\ttt)$-CFIE$_\alpha$ itself.}
\label{tab:cavity-omega-64}

\setlength{\tabcolsep}{3.5pt}
\renewcommand{\arraystretch}{1.15}

\begin{subtable}{\textwidth}
\centering
\caption{$\alpha=1$}

\resizebox{\textwidth}{!}{%
\begin{tabular}{
  r
  @{\hspace{2\tabcolsep}}
  *{4}{S[table-format=1.2e-2,retain-zero-exponent=true]}
  *{2}{S[table-format=1.2e+1]}
  *{2}{c}
}
\toprule
& \multicolumn{2}{c}{Prec. $(\nnn,\ttt)$-CFIE$_\alpha$}
& \multicolumn{2}{c}{$(\nnn,\ttt)$-CFIE$_\alpha$}
& \multicolumn{2}{c}{Condition number}
& \multicolumn{2}{c}{GMRES iterations} \\
\cmidrule(lr){2-3}
\cmidrule(lr){4-5}
\cmidrule(lr){6-7}
\cmidrule(lr){8-9}
$N$
& \multicolumn{1}{c}{$\lVert\nabla e_p\rVert$}
& \multicolumn{1}{c}{$\lVert\operatorname{curl} e_s\rVert$}
& \multicolumn{1}{c}{$\lVert\nabla e_p\rVert$}
& \multicolumn{1}{c}{$\lVert\operatorname{curl} e_s\rVert$}
& \multicolumn{1}{c}{Prec.}
& \multicolumn{1}{c}{Orig.}
& \multicolumn{1}{c}{Prec.}
& \multicolumn{1}{c}{Orig.} \\
\midrule
64
& 1.50e-1  & 3.24e-1
& 1.50e-1  & 3.24e-1
& 4.52e5   & 6.51e6
& 256      & 256 \\

128
& 8.54e-3  & 5.72e-2
& 8.54e-3  & 5.72e-2
& 3.54e5   & 3.61e7
& 444      & 512 \\

256
& 8.98e-9  & 9.51e-8
& 8.98e-9  & 9.51e-8
& 6.59e5   & 1.65e9
& 462      & 1002 \\

512
& 6.67e-11  & 1.12e-10
& 6.65e-11  & 1.12e-10
& 2.50e6   & 3.32e10
& 459      & 1722 \\

1024
& 6.51e-11  & 1.14e-10
& 6.65e-11  & 1.12e-10
& 3.90e7   & 7.91e11
& 465      & {$>$2744} \\

\bottomrule
\end{tabular}%
}
\end{subtable}

\medskip

\begin{subtable}{\textwidth}
\centering
\caption{$\alpha=-1$}

\resizebox{\textwidth}{!}{%
\begin{tabular}{
  r
  @{\hspace{2\tabcolsep}}
  *{4}{S[table-format=1.2e-2,retain-zero-exponent=true]}
  *{2}{S[table-format=1.2e+1]}
  *{2}{c}
}
\toprule
& \multicolumn{2}{c}{Prec. $(\nnn,\ttt)$-CFIE$_\alpha$}
& \multicolumn{2}{c}{$(\nnn,\ttt)$-CFIE$_\alpha$}
& \multicolumn{2}{c}{Condition number}
& \multicolumn{2}{c}{GMRES iterations} \\
\cmidrule(lr){2-3}
\cmidrule(lr){4-5}
\cmidrule(lr){6-7}
\cmidrule(lr){8-9}
$N$
& \multicolumn{1}{c}{$\lVert\nabla e_p\rVert$}
& \multicolumn{1}{c}{$\lVert\operatorname{curl} e_s\rVert$}
& \multicolumn{1}{c}{$\lVert\nabla e_p\rVert$}
& \multicolumn{1}{c}{$\lVert\operatorname{curl} e_s\rVert$}
& \multicolumn{1}{c}{Prec.}
& \multicolumn{1}{c}{Orig.}
& \multicolumn{1}{c}{Prec.}
& \multicolumn{1}{c}{Orig.} \\
\midrule
64
& 3.85e-1  & 8.49e-1
& 3.85e-1  & 8.49e-1
& 4.57e6   & 3.14e7
& 208      & 256 \\

128
& 1.15e-2  & 9.04e-2
& 1.15e-2  & 9.04e-2
& 5.02e5   & 4.20e7
& 225      & 512 \\

256
& 1.34e-8  & 7.02e-8
& 1.34e-8  & 7.02e-8
& 2.10e6   & 2.35e9
& 240      & 1001 \\

512
& 6.51e-11  & 1.12e-10
& 6.64e-11  & 1.12e-10
& 7.01e6   & 1.78e10
& 234      & 1694 \\

1024
& 6.63e-11  & 1.24e-10
& 6.64e-11  & 1.12e-10
& 7.70e7   & 8.57e11
& 247      & {$>$2716} \\

\bottomrule
\end{tabular}%
}
\end{subtable}

\medskip

\begin{subtable}{\textwidth}
\centering
\caption{$\alpha=\frac{1}{2}$}

\resizebox{\textwidth}{!}{%
\begin{tabular}{
  r
  @{\hspace{2\tabcolsep}}
  *{4}{S[table-format=1.2e-2,retain-zero-exponent=true]}
  *{2}{S[table-format=1.2e+1]}
  *{2}{c}
}
\toprule
& \multicolumn{2}{c}{Prec. $(\nnn,\ttt)$-CFIE$_\alpha$}
& \multicolumn{2}{c}{$(\nnn,\ttt)$-CFIE$_\alpha$}
& \multicolumn{2}{c}{Condition number}
& \multicolumn{2}{c}{GMRES iterations} \\
\cmidrule(lr){2-3}
\cmidrule(lr){4-5}
\cmidrule(lr){6-7}
\cmidrule(lr){8-9}
$N$
& \multicolumn{1}{c}{$\lVert\nabla e_p\rVert$}
& \multicolumn{1}{c}{$\lVert\operatorname{curl} e_s\rVert$}
& \multicolumn{1}{c}{$\lVert\nabla e_p\rVert$}
& \multicolumn{1}{c}{$\lVert\operatorname{curl} e_s\rVert$}
& \multicolumn{1}{c}{Prec.}
& \multicolumn{1}{c}{Orig.}
& \multicolumn{1}{c}{Prec.}
& \multicolumn{1}{c}{Orig.} \\
\midrule
64
& 1.41e-1  & 2.98e-1
& 1.41e-1  & 2.98e-1
& 4.03e5   & 5.67e6
& 256      & 256 \\

128
& 3.40e-2  & 2.18e-1
& 3.40e-2  & 2.18e-1
& 6.43e5   & 1.10e8
& 446      & 512 \\

256
& 9.15e-9  & 6.17e-8
& 9.15e-9  & 6.17e-8
& 8.98e5   & 2.26e9
& 459      & 1008 \\

512
& 6.62e-11  & 1.12e-10
& 6.65e-11  & 1.12e-10
& 3.24e6   & 3.32e10
& 456      & 1724 \\

1024
& 6.60e-11  & 1.14e-10
& 6.65e-11  & 1.12e-10
& 4.68e7   & 8.96e11
& 462      & 2748 \\

\bottomrule
\end{tabular}%
}
\end{subtable}

\medskip

\begin{subtable}{\textwidth}
\centering
\caption{$\alpha=-\frac{1}{2}$}

\resizebox{\textwidth}{!}{%
\begin{tabular}{
  r
  @{\hspace{2\tabcolsep}}
  *{4}{S[table-format=1.2e-2,retain-zero-exponent=true]}
  *{2}{S[table-format=1.2e+1]}
  *{2}{c}
}
\toprule
& \multicolumn{2}{c}{Prec. $(\nnn,\ttt)$-CFIE$_\alpha$}
& \multicolumn{2}{c}{$(\nnn,\ttt)$-CFIE$_\alpha$}
& \multicolumn{2}{c}{Condition number}
& \multicolumn{2}{c}{GMRES iterations} \\
\cmidrule(lr){2-3}
\cmidrule(lr){4-5}
\cmidrule(lr){6-7}
\cmidrule(lr){8-9}
$N$
& \multicolumn{1}{c}{$\lVert\nabla e_p\rVert$}
& \multicolumn{1}{c}{$\lVert\operatorname{curl} e_s\rVert$}
& \multicolumn{1}{c}{$\lVert\nabla e_p\rVert$}
& \multicolumn{1}{c}{$\lVert\operatorname{curl} e_s\rVert$}
& \multicolumn{1}{c}{Prec.}
& \multicolumn{1}{c}{Orig.}
& \multicolumn{1}{c}{Prec.}
& \multicolumn{1}{c}{Orig.} \\
\midrule
64
& 5.86e-1  & 3.23e0
& 5.86e-1  & 3.23e0
& 6.23e6   & 6.65e7
& 199      & 256 \\

128
& 1.03e-2  & 6.97e-2
& 1.03e-2  & 6.97e-2
& 5.46e5   & 3.90e7
& 192      & 512 \\

256
& 1.12e-8  & 4.12e-8
& 1.12e-8  & 4.12e-8
& 2.41e6   & 1.32e9
& 205      & 1001 \\

512
& 6.39e-11  & 1.12e-10
& 6.64e-11  & 1.12e-10
& 8.24e6   & 2.23e10
& 200      & 1690 \\

1024
& 7.58e-11  & 1.19e-10
& 6.62e-11  & 1.12e-10
& 8.69e7   & 9.33e11
& 213      & {$>$2716} \\

\bottomrule
\end{tabular}%
}
\end{subtable}

\end{table}

\subsection{Spectral properties}\label{sec:spectrum}

Figures~\ref{fig:eigenvalues:ellipse}--\ref{fig:svd:cavity} display, for the ellipse and the cavity at $\omega=64$ with $\alpha=-1/2$, the eigenvalues and the singular values of the original and preconditioned matrices on fine meshes ($4N=4096$ and $4N=8192$, respectively). The eigenvalues of the original matrices spread over regions of size $10^{5}$--$10^{8}$ in the complex plane, whereas those of the preconditioned matrices cluster at $1$: for the ellipse essentially all of them lie within $0.03$ of $z=1$, and for the cavity a dense cluster within $0.1$ of $z=1$ coexists with a few hundred eigenvalues of moderate size. The singular values tell the same story. Those of the original matrices decay smoothly over nine (ellipse) and fourteen (cavity) orders of magnitude; those of the preconditioned matrices are equal to $1$ to within a few percent, except for two short tails: a few tens of values between $0.03$ and $30$ for the ellipse, and a few hundred values up to $10^{4}$--$10^{5}$ together with a few tens down to $10^{-2.5}$ for the cavity. These tails are the outliers responsible for the condition numbers of the previous section; the bulk of the spectrum behaves as the identity in both cases, so that the preconditioner confines the ill-conditioning to a low-dimensional subspace. Finally, Figure~\ref{fig:GMRES_convergence_history} shows the GMRES residual histories: the preconditioned curves are superimposed for all meshes that resolve the solution, while the unpreconditioned ones stretch linearly with $N$.
\begin{figure}[htbp]
\caption{Eigenvalue distribution in the complex plane for the elliptical domain with $\alpha = -1/2$, $\omega = 64$, $\lambda_1 = 2$, $\mu_1 = 3$, $\lambda_2 = 3$, $\mu_2 = 2$, $\rho = 2$, and $N = 1024$ ($4\,096 \times 4\,096$ matrix): (a) full spectrum; (b), (c) successive close-up views around $z = 1$. In the preconditioned formulation the eigenvalues cluster around $1$.}
    \label{fig:eigenvalues:ellipse}
    \centering
    \begin{subfigure}[b]{0.66\textwidth}
        \centering
        \includegraphics[width=\linewidth]{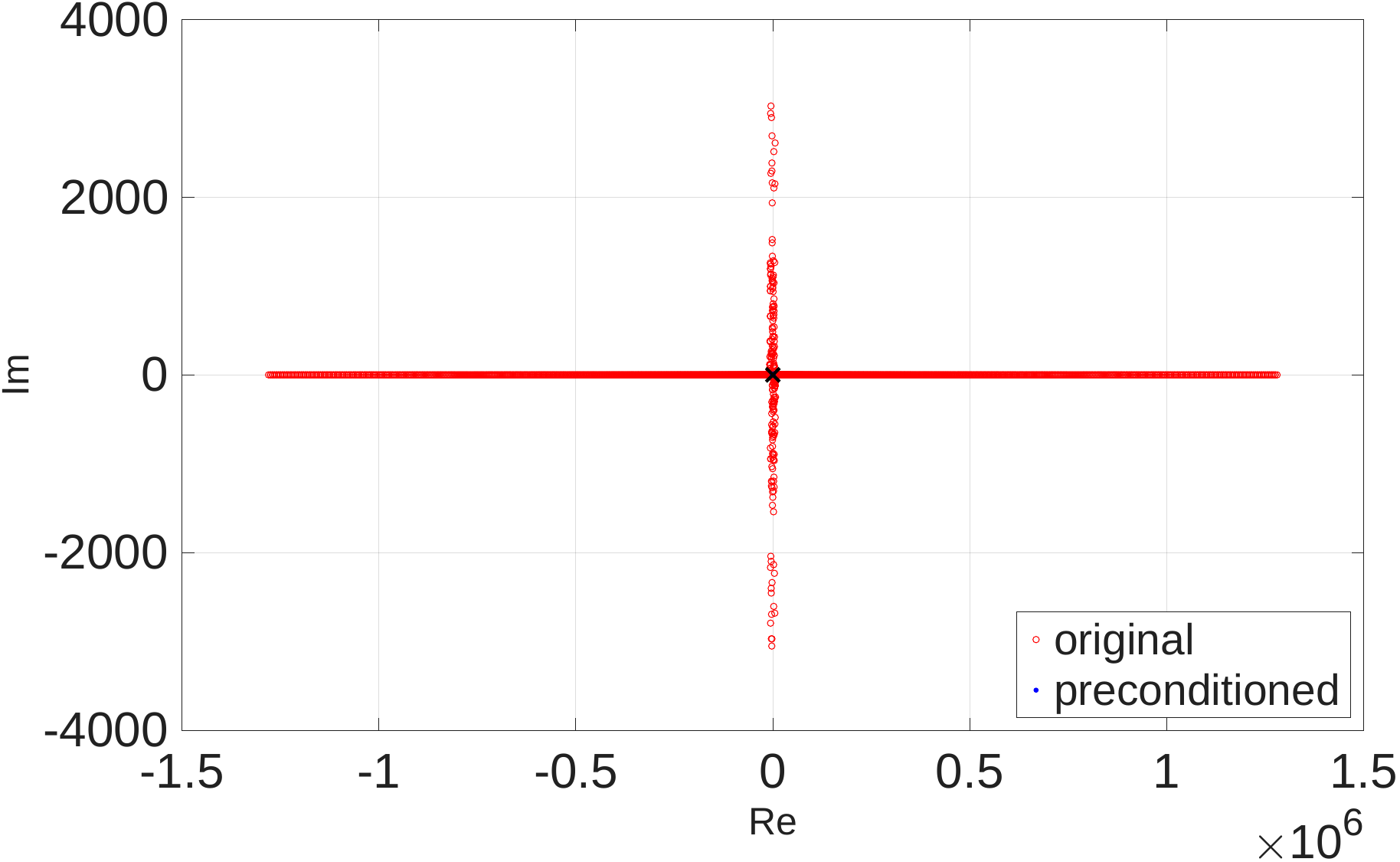}
        \caption{Full spectrum in the complex plane.}
        \label{subfig:spec_full}
    \end{subfigure}
    \vspace{0.8em}
    \begin{subfigure}[b]{0.495\textwidth}
        \centering
        \includegraphics[width=\linewidth]{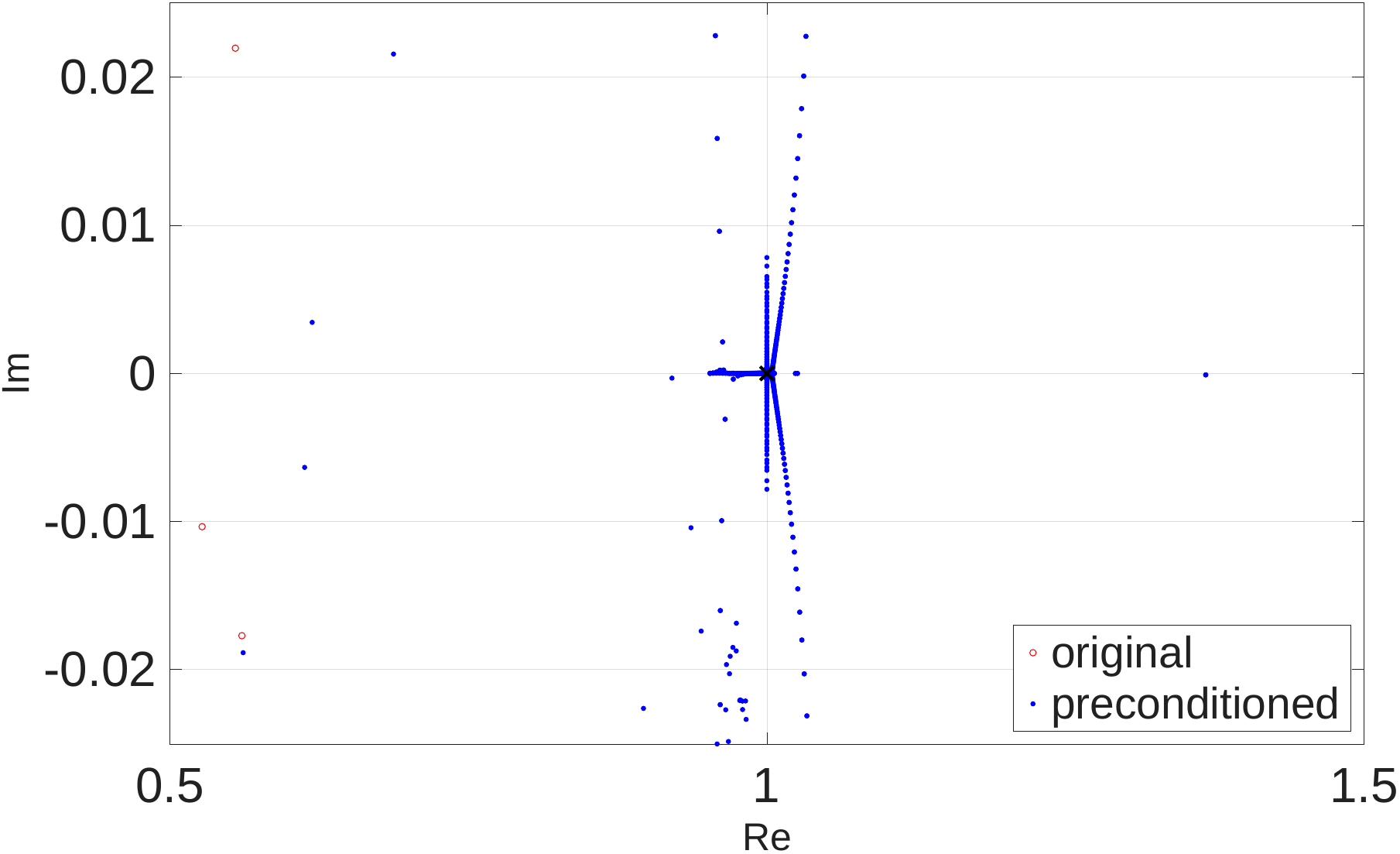}
        \caption{Zoom-in around $z = 1$.}
        \label{subfig:spec_zoom1}
    \end{subfigure}
    \hfill
    \begin{subfigure}[b]{0.47\textwidth}
        \centering
        \includegraphics[width=\linewidth]{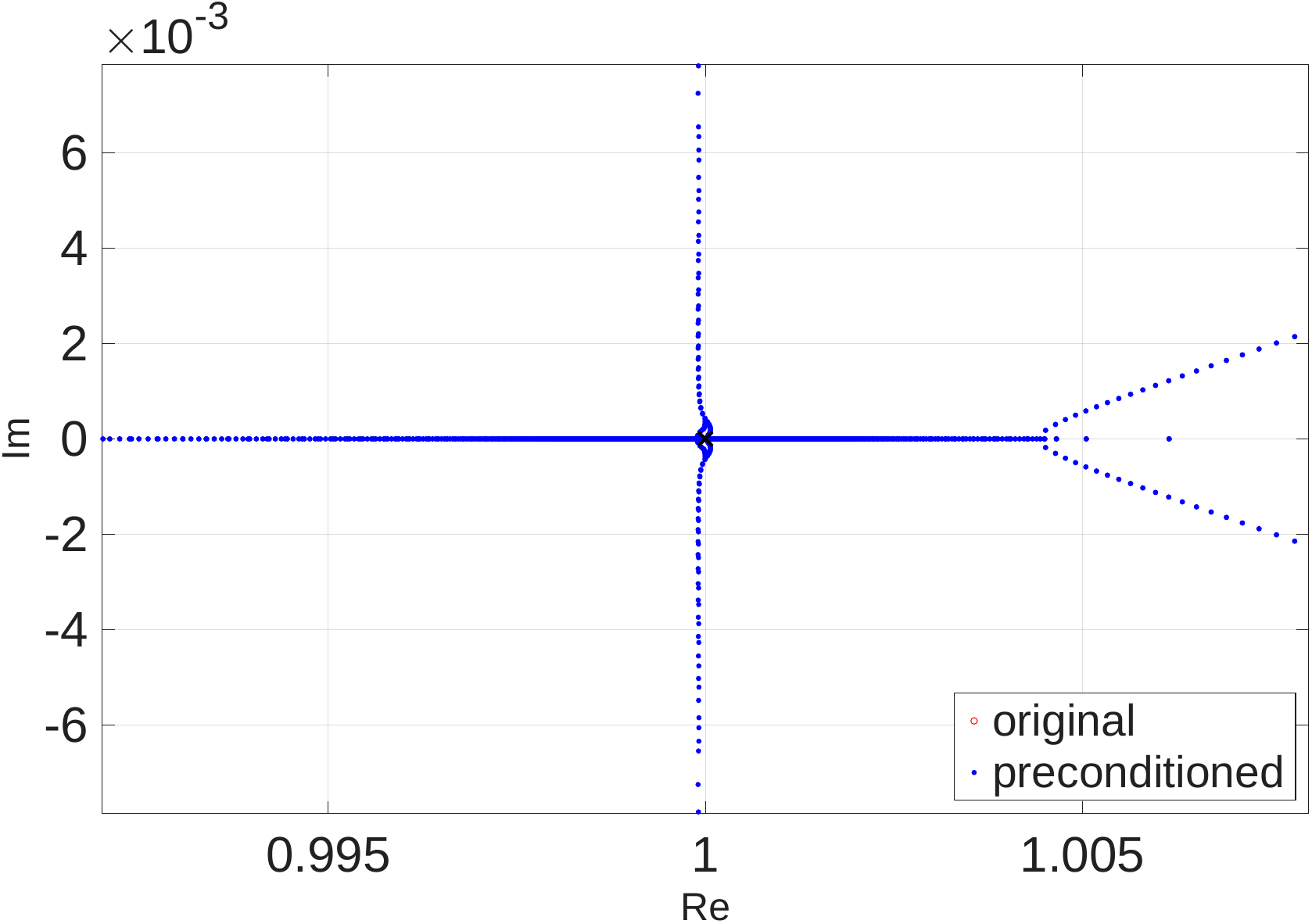}
        \caption{Further close-up around $z = 1$.}
        \label{subfig:spec_zoom2}
    \end{subfigure}
\end{figure}

\begin{figure}[htbp]
\caption{Eigenvalue distribution in the complex plane for the cavity domain with $\alpha = -1/2$, $\omega = 64$, $\lambda_1 = 2$, $\mu_1 = 3$, $\lambda_2 = 3$, $\mu_2 = 2$, $\rho = 2$, and $N = 2048$ ($8\,192 \times 8\,192$ matrix): (a) full spectrum; (b), (c) successive close-up views around $z = 1$. In the preconditioned formulation the eigenvalues cluster around $1$.}
    \label{fig:eigenvalues:cavity}
    \centering
    \begin{subfigure}[b]{0.66\textwidth}
        \centering
        \includegraphics[width=\linewidth]{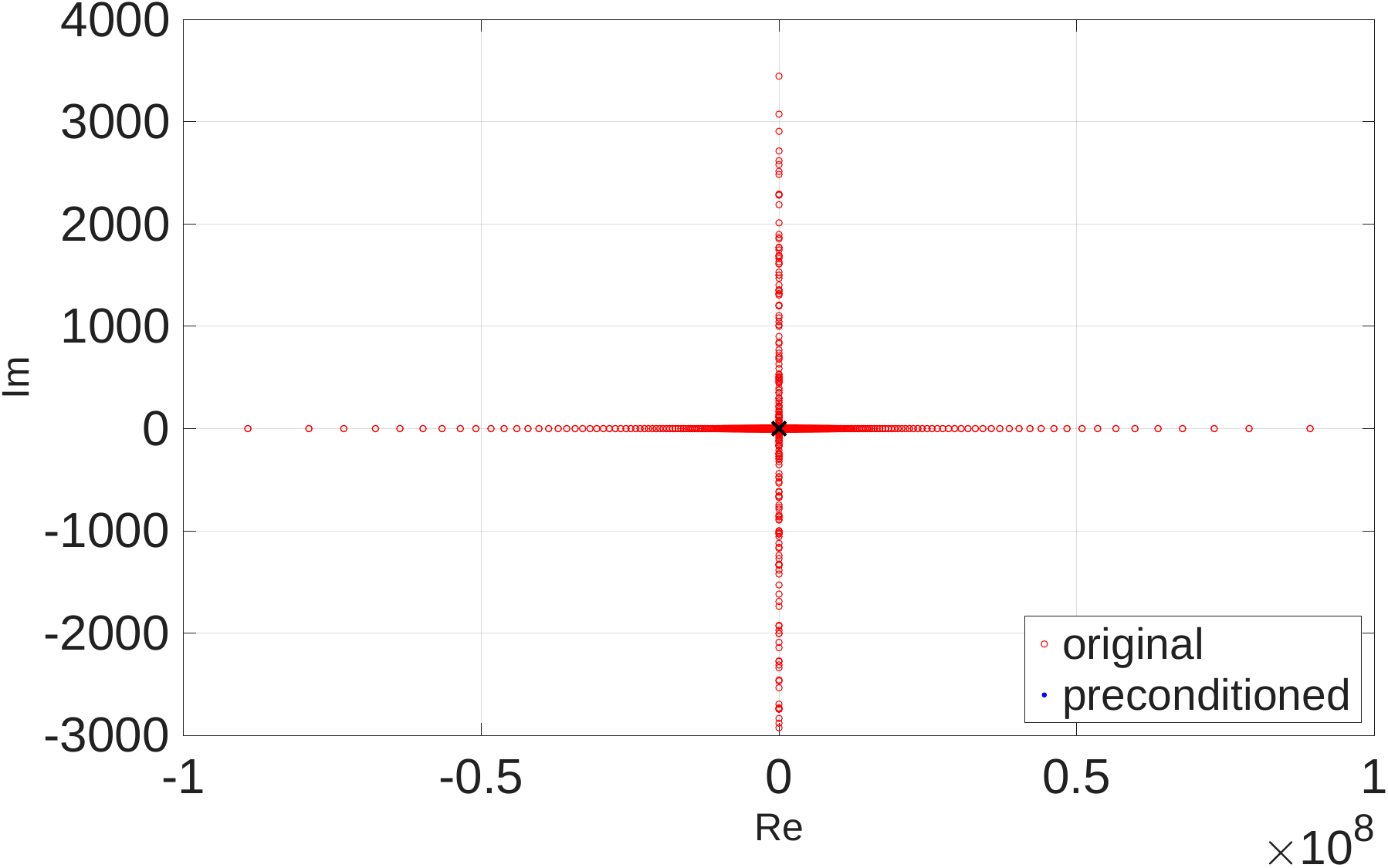}
        \caption{Full spectrum in the complex plane.}
        \label{subfig:spec_full:cavity}
    \end{subfigure}
    \vspace{0.8em}
    \begin{subfigure}[b]{0.48\textwidth}
        \centering
        \includegraphics[width=0.8\linewidth]{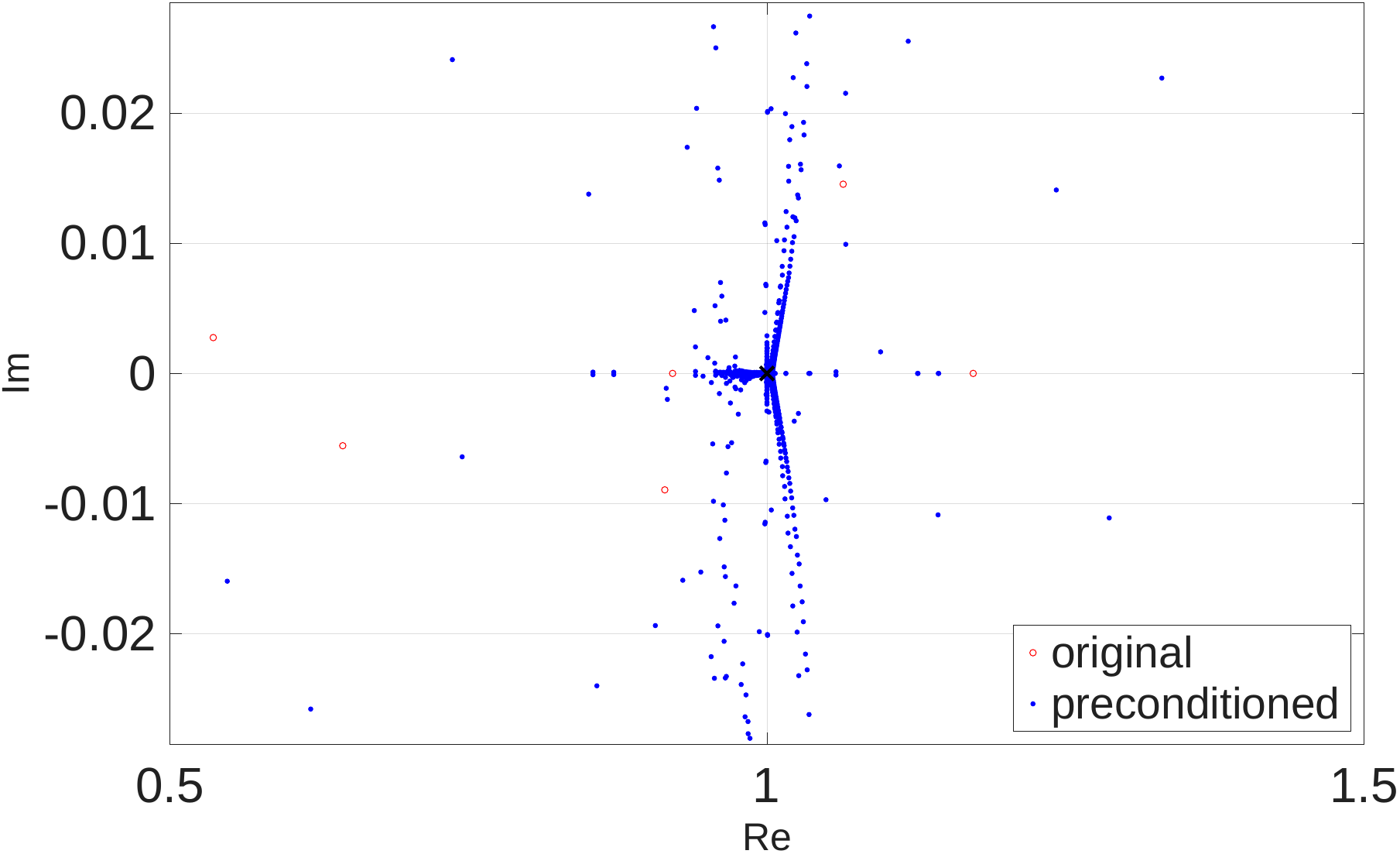}
        \caption{Zoom-in around $z = 1$.}
        \label{subfig:spec_zoom1:cavity}
    \end{subfigure}
    \hfill
    \begin{subfigure}[b]{0.48\textwidth}
        \centering
        \includegraphics[width=0.8\linewidth]{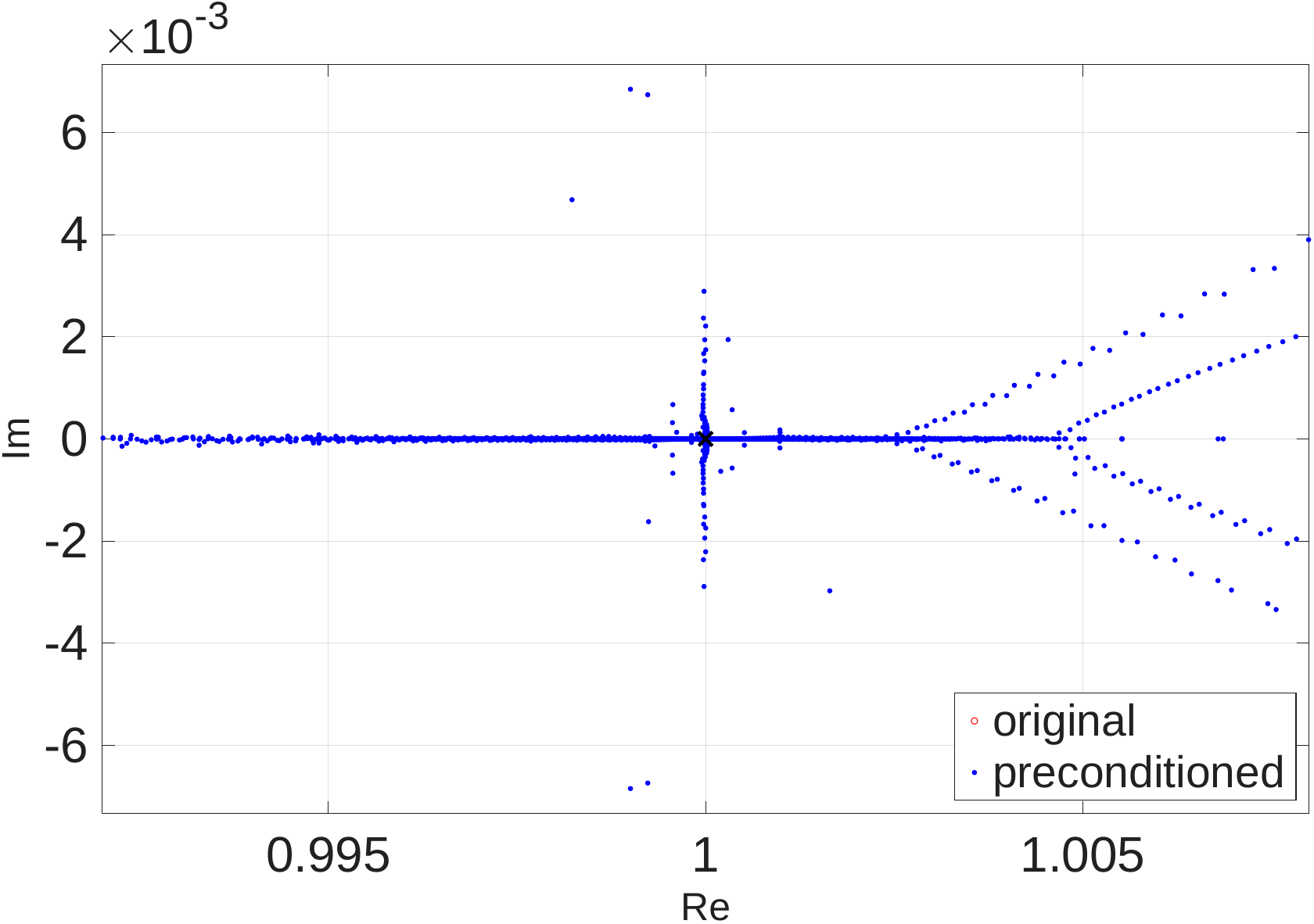}
        \caption{Further close-up around $z = 1$.}
        \label{subfig:spec_zoom2:cavity}
    \end{subfigure}
\end{figure}

\begin{figure}[htbp]
\caption{Singular values of the original (top) and preconditioned (bottom) matrices for the ellipse experiment ($\lambda_1 = 2$, $\mu_1 = 3$, $\lambda_2 = 3$, $\mu_2 = 2$, $\rho = 2$) with $\omega = 64$, $\alpha = -1/2$ and $N=1024$ ($4\,096 \times 4\,096$ matrix).}
\label{fig:svd:ellipse}
\begin{center}
        \includegraphics[width=0.8\linewidth]{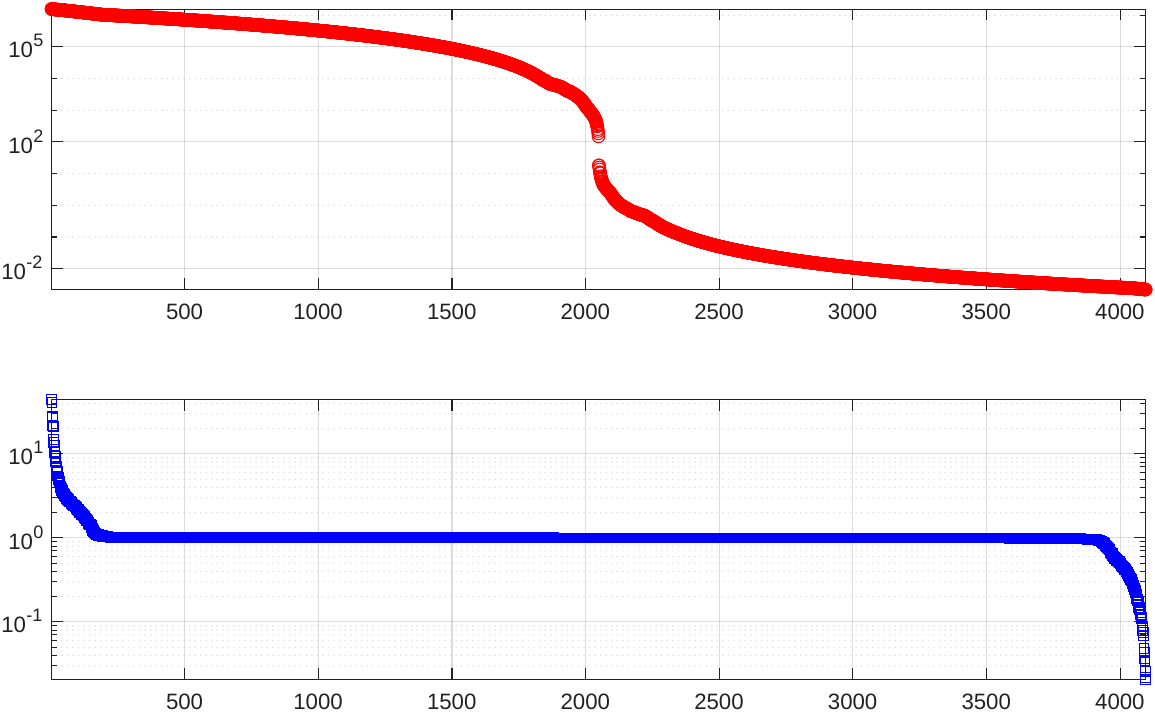}
\end{center}
\end{figure}

\begin{figure}[htbp]
\caption{Singular values of the original (top) and preconditioned (bottom) matrices for the cavity experiment ($\lambda_1 = 2$, $\mu_1 = 3$, $\lambda_2 = 3$, $\mu_2 = 2$, $\rho = 2$) with $\omega = 64$, $\alpha = -1/2$ and $N=2048$ ($8\,192 \times 8\,192$ matrix).}
\label{fig:svd:cavity}
\begin{center}
        \includegraphics[width=0.8\linewidth]{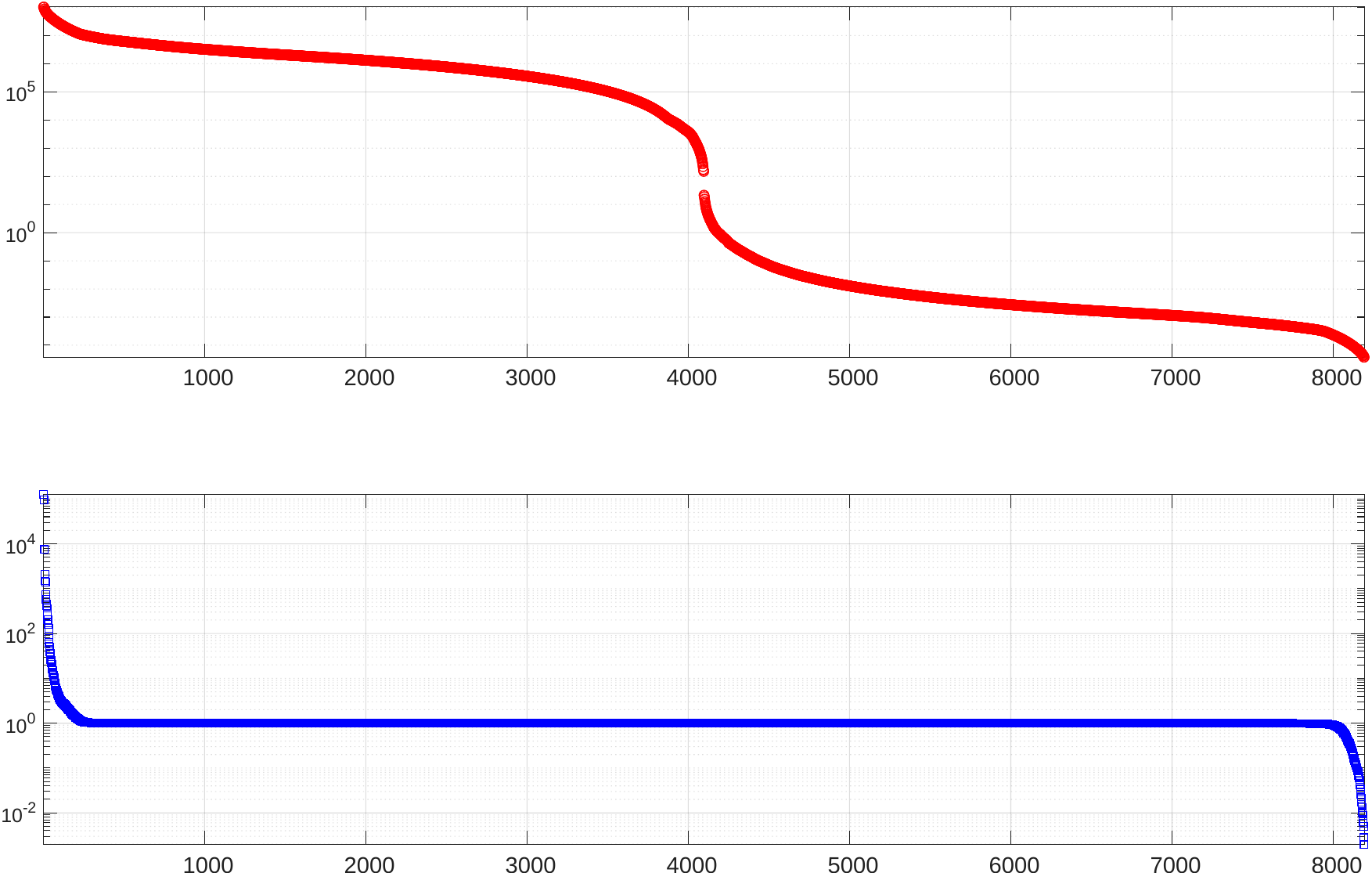}
\end{center}
\end{figure}

\begin{figure}[htbp]
   \caption{
GMRES convergence history (relative residual vs. iteration) for the original and preconditioned systems, ellipse (top) and cavity (bottom), $\omega=64$, $\alpha=-1/2$, for the sequence of meshes of Tables~\ref{tab:ellipse-omega-64} and~\ref{tab:cavity-omega-64}. The preconditioned system converges in a number of iterations independent of the mesh, about 100 for the ellipse and 200 for the cavity, whereas for the original system it grows linearly with $N$.
}  
    \label{fig:GMRES_convergence_history}
\begin{center}
 \includegraphics[width = 0.85\textwidth]{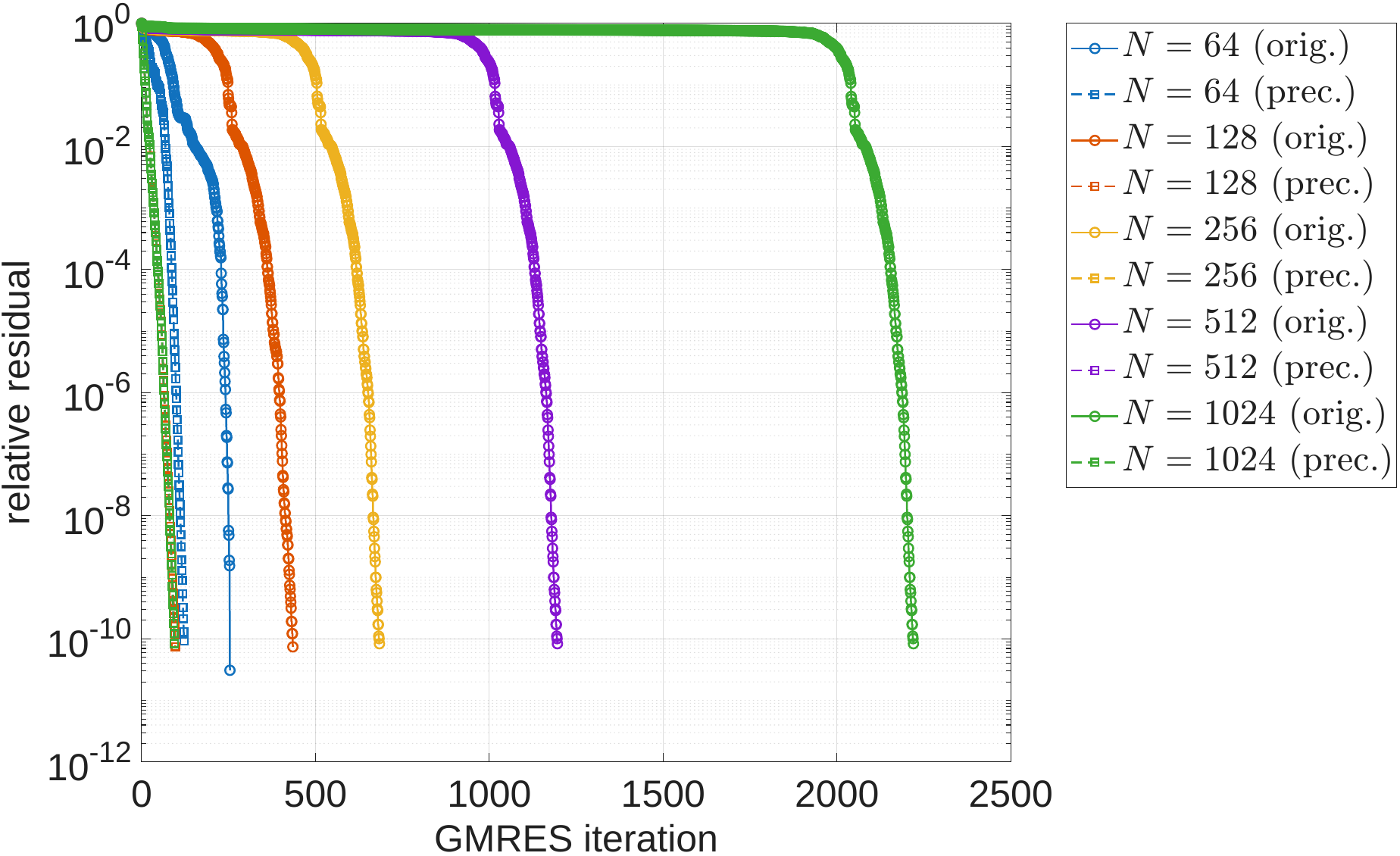}
 \includegraphics[width = 0.85\textwidth]{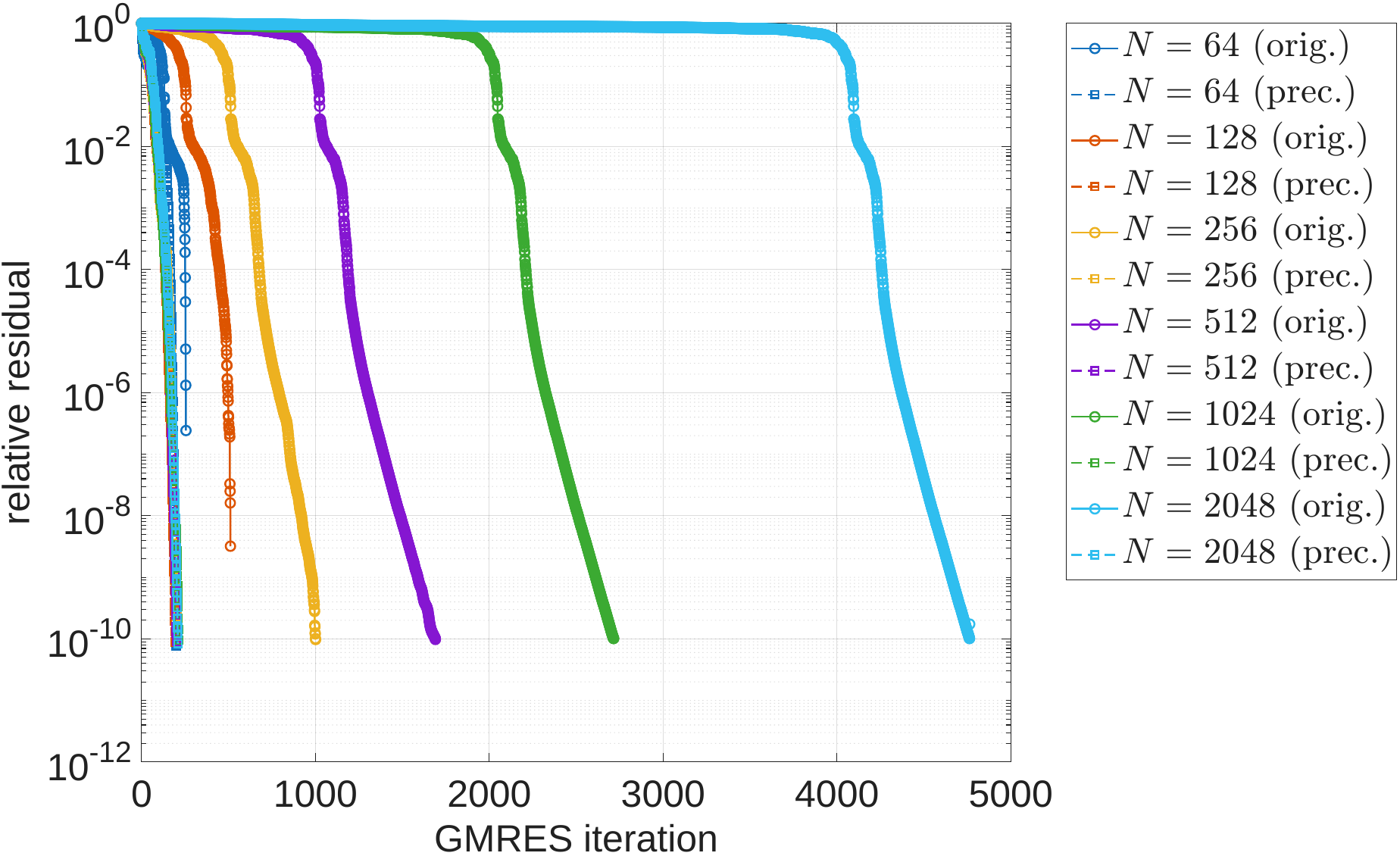}
\end{center}
\end{figure}

\clearpage

\section{Conclusions}\label{sec:conclusions}

We have presented a boundary integral framework for two-dimensional time-harmonic elastic transmission problems in which, through the Helmholtz decomposition, only the four classical Helmholtz BIOs are needed. The key ingredient is the rewriting of the Navier traction operator in the local normal--tangential frame of the interface, which eliminates second-order normal derivatives in favour of tangential derivatives and curvature terms. The resulting $(\nnn,\ttt)$-CFIE$_\alpha$ is uniquely solvable for every $\alpha\neq0$ and every admissible choice of material parameters, but it is defective: its principal part has infinite-dimensional kernel and cokernel, and the discretized systems are severely ill-conditioned. We have constructed explicit left and right analytical preconditioners, built from Hilbert transforms, tangential derivatives and the curvature, and proved that the preconditioned operator is a compact perturbation of the identity.

The numerical experiments confirm the analysis. For the circle the preconditioned matrices have mesh-independent condition numbers and GMRES converges in a fixed number of iterations. For the ellipse and for a smooth non-convex cavity the number of iterations becomes independent of the mesh and is reduced by one order of magnitude or more, also at high frequency, although for the cavity a few outlying singular values prevent the condition number from being uniformly bounded. Negative values of $\alpha$, in particular $\alpha=-1/2$, are consistently the most efficient.

Open questions include the dependence of the convergence on $\alpha$, the control of the outlying discrete singular values, and the extension to domains with corners, to multiple junctions via multi-trace formulations and to three dimensions, where the local-frame rewriting of the traction operator must be revisited.
\section*{Acknowledgments}
 Víctor Domínguez is partially supported by project ``Adquisición de conocimiento y minería de datos,
funciones especiales y métodos numéricos avanzados'' from Universidad Pública de Navarra and  ``Técnicas innovadoras para la resolución de problemas evolutivos'', ref.
PID2022-136441NB-I00, from the Ministerio de Ciencia e Innovación, Gobierno de España.

Catalin Turc gratefully acknowledges support from NSF through contract DMS-2408635.

\bibliography{transmission}

\end{document}